\documentclass[12pt,a4paper,twoside]{article}
\usepackage[ansinew]{inputenc}
\usepackage{amsmath}
\usepackage{amsfonts}
\usepackage{amssymb}
\usepackage{amsthm}
\usepackage{tikz}
\usepackage{ifthen}
 \usetikzlibrary{shapes.geometric,arrows,snakes,backgrounds}

\makeatletter
\def\@seccntformat#1{\csname the#1\endcsname.\quad}
\makeatother

\newcommand{\EE}{{\mathbb{S}^2}}
\newcommand{\RR}{{\mathbb{R}^2}}
\newcommand{\Sd}{{\mathbb{S}^1}}

\newcommand{\R}{\mathbb{R}}
\newcommand{\C}{\mathbb{C}}
\newcommand{\Id}{\mathop{\rm Id}\nolimits}
\newcommand{\Cl}{\mathop{\rm Cl}\nolimits}
\newcommand{\Bd}{\mathop{\rm Bd}\nolimits}
\newcommand{\Inte}{\mathop{\rm Int}\nolimits}

\newcommand{\Sing}{\mathop{\rm Sing}\nolimits}
\newcommand{\diam}{\mathop{\rm diam}\nolimits}

\newcommand{\arccot}{\mathop{\rm arccot}\nolimits}

\newcommand{\0}{\mathbf{0}}
\def\ii{\mathbf{i}}

\newcounter{letra}

\newtheorem{theorem}{Theorem}[section]
\newtheorem{proposition}[theorem]{Proposition}

\newtheorem{lemma}[theorem]{Lemma}
\newtheorem{maintheorem}[letra]{Theorem}
\newtheorem{maincorollary}[letra]{Corollary}
\theoremstyle{definition}
\newtheorem{definition}[theorem]{Definition}
\theoremstyle{remark}
\newtheorem{remark}[theorem]{Remark}

\theoremstyle{plain}

\oddsidemargin -0.25cm \evensidemargin -0.25cm \topmargin -0.5cm
\setlength{\textwidth}{16.5cm} \setlength{\textheight}{22.5cm}

%%%%%%%%%%%%%%%%%%%%%%%%%%%%%%%%%%%%%%%%%%%%%%%%%%%%%%%%%%%%%%%%%%%%%%%%%%%%%%%%%%%%%%%%%%%%%%%%
%%%%%%%%%%%%%%%%%%%%%%%%%%%%%%%%%%%%%%%%%%%%%%%%%%%%%%%%%%%%%%%%%%%%%%%%%%%%%%%%%%%%%%%%%%%%%%
%%%%%%%%%%%%%%%%%%%%%%%%%%%%%%%%%%%%%%%%%%%%%%%%%%%%%%%%%%%%%%%%%%%%%%%%%%%%%%%%%%%%%%%%%%%%%%
%%%%%%%%%%%%%%%%%%%%%%%%%%%%%%%%%%%%%%%%%%%%%%%%%%%%%%%%%%%%%%%%%%%%%%%%%%%%%%%%%%%%%%%%%%%%%%%%
%%%%%%%%%%%%%%%%%%%%%%%%%%%%%%%%%%%%%%%%%%%%%%%%%%%%%%%%%%%%%%%%%%%%%%%%%%%%%%%%%%%%%%%%%%%%%%
%%%%%%%%%%%%%%%%%%%%%%%%%%%%%%%%%%%%%%%%%%%%%%%%%%%%%%%%%%%%%%%%%%%%%%%%%%%%%%%%%%%%%%%%%%%%%%
%%%%%%%%%%%%%%%%%%%%%%%%%%%%%%%%%%%%%%%%%%%%%%%%%%%%%%%%%%%%%%%%%%%%%%%%%%%%%%%%%%%%%%%%%%%%%%%%
%%%%%%%%%%%%%%%%%%%%%%%%%%%%%%%%%%%%%%%%%%%%%%%%%%%%%%%%%%%%%%%%%%%%%%%%%%%%%%%%%%%%%%%%%%%%%%
%%%%%%%%%%%%%%%%%%%%%%%%%%%%%%%%%%%%%%%%%%%%%%%%%%%%%%%%%%%%%%%%%%%%%%%%%%%%%%%%%%%%%%%%%%%%%%
%%%%%%%%%%%%%%%%%%%%%%%%%%%%%%%%%%%%%%%%%%%%%%%%%%%%%%%%%%%%%%%%%%%%%%%%%%%%%%%%%%%%%%%%%%%%%%
%%%%COMANDOS QUE HAY QUE COMENTAR SU DEFINICION ENTERA O PARTE EN LA VERSION FINAL
%%%%%Est\'a todo preparado para cambiar un 1 por un 0 en la definici\'on del contador versionfinal
%%%%%y que se quiten comentarios hechos por gabi, los ap\'endices y los colores rojos o morados usados para resaltar cambios

\newcounter{versionfinal}
\setcounter{versionfinal}{0} %poner un 1 en la versi\'on final

\ifthenelse{\value{versionfinal}=1}{
\newcommand{\josegines}[1]{}%\renewcommand{\thefootnote}{\bfseries\color{red}\arabic{footnote}}\footnote{\textcolor{red}{\textbf{Nota de Gabi: } #1}}\renewcommand{\thefootnote}{\arabic{footnote}}}
%\renewcommand{\thefootnote}{\bfseries\color{red}\arabic{footnote}}
% \footnote{\textbf{Nota de Gabi: } #1}
%\renewcommand{\thefootnote}{\arabic{footnote}}
%}

\newcommand{\corregidooriginal}[2]{
%#1
%
% {\bfseries \color{red} #2}
%
}
\newcommand{\borrar}[1]{%#1
}%el contenido que haya dentro hay que quitarlo en la \'ultima versi\'on comentando el#1

\newcommand{\apendices}[1]{%#1
}

}{%else, es decir cuando es un borrador

\newcommand{\josegines}[1]{\renewcommand{\thefootnote}{\bfseries\color{red}\arabic{footnote}}\footnote{\textcolor{red}{\textbf{Nota de Jose Gines: } #1}}\renewcommand{\thefootnote}{\arabic{footnote}}}

%\renewcommand{\thefootnote}{\bfseries\color{red}\arabic{footnote}}
% \footnote{\textbf{Nota de Gabi: } #1}
%\renewcommand{\thefootnote}{\arabic{footnote}}
%}

\newcommand{\corregidooriginal}[2]{#1

{\bfseries \color{red} #2}

}

\newcommand{\borrar}[1]{%#1
}%el contenido que haya dentro hay que quitarlo en la \'ultima versi\'on comentando el#1

\newcommand{\apendices}[1]{#1}

%\tableofcontents
}%fin del else

\begin{document}

\title{A topological characterization of the $\omega$-limit sets
of analytic vector fields on open subsets of the sphere}
\author{Jos\'e Gin\'es Esp\'in Buend\'ia and V\'ictor Jim\'enez
Lop\'ez}
\date{\normalsize{Universidad de Murcia (Spain)}\\
\normalsize{\today}}
\maketitle

\begin{abstract}
In \cite{JL}, V. Jim\'enez and J. Llibre characterized, up to
homeomorphism, the $\omega$-limit sets of analytic vector fields
on the sphere and the projective plane. The authors also studied
the same problem for open subsets of these surfaces.

Unfortunately, an essential lemma in their programme for general
surfaces has a gap. Although the proof of this lemma can be
amended in the case of the sphere, the plane, the projective plane
and the projective plane minus one point (and therefore the
characterizations for these surfaces in~\cite{JL} are correct),
the lemma is not generally true, see~\cite{EJQTDS}.

Consequently, the topological characterization for analytic vector
fields on open subsets of the sphere and the projective plane is
still pending. In this paper, we close this problem in the case of
open subsets of the sphere.
\end{abstract}

\noindent{\bf Keywords:} analytic vector field, $\omega$-limit
set, sphere.

\smallbreak \noindent{\bf 2010 Mathematics Subject
Classification:} 37E35, 37B99, 37C10.

\section{Introduction and statements of the main results}\label{intro}

In a certain sense, the problem of characterizing, from a
topological point of view, the $\omega$-limit sets of
two-dimensional continuous dynamical systems is as old as the
theory of dynamical systems itself. After all, what the famous
Poincar\'e-Bendixson states, in a present-day formulation, is that
any $\omega$-limit of a sphere flow containing no critical points
is a periodic orbit ---hence, topologically speaking, a Jordan
curve.

If the absence of critical points is no longer required, sphere
$\omega$-limit sets still admit a very clear-cut characterization,
as shown by Vinograd in the early fifties: they are the boundaries
of simply connected regions \cite{VINOGRAD}. Building on
Vinograd's characterization, and some partial results by Smith and
Thomas \cite{SmTh}, V. Jim\'enez L\'opez and G. Soler L\'opez
published a number of papers providing a complete topological
classification of $\omega$-limit sets for continuous flows on all
compact (without boundary) surfaces. These results were summarized
in \cite{BUMI}, where a list of relevant references can also be
found. It is worth emphasizing that, due to a result by
Guti\'errez \cite{Gutierrez}, $C^\infty$-flows are topologically
undistinguishable from continuous flows as far as non-trivial
recurrences do not occur; in particular, Vinograd's theorem is
still true in the smooth realm.

When the very natural assumption of analyticity is added (in fact,
the Poincar\'e-Bendix\-son theorem was first proved by Poincar\'e
for analytic vector fields!), Vinograd's theorem does not work any
more: there are many simply connected sphere regions whose
boundaries (even after topological deformation) cannot be realized
by analytic flows. The topological classification of the
$\omega$-limit sets of analytic flows in the sphere (and also in
the plane and in the projective plane) was accomplished by
Jim\'enez L\'opez and Llibre in \cite{JL}. A similar
classification, now for analytic flows just defined on open
subsets of these surfaces, was also outlined there.

Unfortunately, in doing this, Jim\'enez L\'opez y Llibre made an
oversight, wrongly assuming that, in this setting, an
$\omega$-limit set cannot be locally an arc at any of its points.
While this is true for the sphere, the plane, the projective plane
and the projective plane minus one point (hence the main results
of \cite{JL} remain correct), it is possible, for instance, that
an arc is an $\omega$-limit set for an analytic vector field
defined in the whole sphere except at both endpoints of the arc:
see \cite{EJQTDS}. To make things worse, this unexpected ``arc
issue'' implies that the characterizations of $\omega$-limit sets
for the sphere and the projective plane cannot be more or less
directly extended (as assumed in \cite{JL}) to proper open subsets
of these surfaces, particularly if we intend to preserve
analyticity as much as possible. The aim of this paper is,
therefore, to provide a correct (and optimal) topological
characterization of the $\omega$-limit sets of analytic vector
fields defined on open subsets of the sphere. Our main result is
surprisingly easy to state: these $\omega$-limit sets are,
essentially, the boundaries of simply connected Peano spaces. We
intend to address the similar problem for the projective plane in
a forthcoming paper.

Before stating precisely our results, we need some definitions and
notions. Recall that a function $v=f(u)$, $u=(u_1,\ldots,u_n)$,
mapping an open subset $U$ of $\mathbb{R}^n$ into $\R$, is called
\emph{(real) analytic} if it can be locally written as a
convergent power series in the variables $u_1,\ldots,u_n$. A
function $f:U\to \mathbb{R}^m$ is called \emph{analytic} when each
of its components is analytic in the previous sense.

Throughout the paper, the distance $d(\cdot,\cdot)$ in the unit
sphere $\EE= \{(u_1,u_2,u_3) \in \mathbb{R}^3:u_1^2 + u_2^2 +
u_3^2=1\}$ will remain fixed. We endow $\EE$ with an analytic
differential structure using as charts the \emph{stereographic
projections} $\pi_N:\EE\setminus \{p_N\}\to \RR$ and
$\pi_S:\EE\setminus \{p_S\}\to \RR$ defined, respectively, by
$\pi_N(x,y,z)= (x/(1-z),y/(1-z))$ and $\pi_S(x,y,z)=
(x/(1+z),-y/(1+z))$ (here $p_N=(0,0,1)$ and $p_S=(0,0,-1)$ are the
\emph{north and south poles}). Now, if $f$ is a map from an open
subset $O$ of $\EE$ into $\mathbb{R}^m$, differentiability for $f$
is defined in the usual way. In particular, $f$ is called
\emph{analytic} if the compositions $f\circ \pi_N^{-1}$ and
$f\circ \pi_S^{-1}$ (whenever they make sense) are analytic. If
$f:O\to \mathbb{R}^3$ satisfies that $f(u)$ is tangent to $\EE$ at
$u$ for any $u\in O$, then it is called a \emph{vector field} on
$O$. In this paper we will only deal with $C^\infty$ and analytic
vector fields. We say that a set $A\subset O$ is \emph{analytic
(in $O$)} if it is the set of zeros of some analytic map $F:O\to
\R$.

If $f$ is a  $C^\infty$-vector field on $\EE$, and $u_0\in \EE$,
then we denote by $\Phi_{u_0}(t)$ the maximal solution $u=u(t)$ of
the differential equation $u'=f(u)$ with initial condition
$u(0)=u_0$. The map $u(t)$ is defined for all $t\in \R$, and
$\Phi:\R\times \EE\to \EE$ defined by $\Phi(t,u)=\Phi_u(t)$, the
\emph{flow associated to $f$} is a continuous (in fact, a
$C^\infty$-) map. The set $\Gamma=\Phi_u(\R)$ is called an
\emph{orbit} of $\Phi$, and any subset $\Phi_u(I)$, with $I$ an
interval, a \emph{semi-orbit} of $\Gamma$. If the orbit $\Gamma$
equals $u$ (that is, $f$ vanishes at $u$), then $u$ is called
\emph{singular}, and it is \emph{regular} otherwise. We denote by
$\Sing(\Phi)$ the set of singular points of $\Phi$. The
\emph{$\omega$-limit set} $\omega_\Phi(u)$ of $u$ (or of the orbit
$\Phi_u(\R)$) is the set of accumulation points of $\Phi_u(t)$ as
$t\to \infty$, and the \emph{$\alpha$-limit set} $\alpha_\Phi(u)$
is the analogously defined set for $t\to -\infty$. A set $M\subset
\EE$ is called a \emph{flow box} for $\Phi$ (respectively, a
\emph{semi-flow box} for $\Phi$) if there is a homeomorphism
$h:[-1,1]\times [-1,1]\rightarrow M$ (respectively, a
homeomorphism $h:[-1,1]\times [0,1]\rightarrow M$) such that
$h([-1,1]\times \{s\})$ is a semi-orbit of $\Phi$ for every $s\in
[-1,1]$ (respectively, for every $s\in (0,1]$). In the second
case, we call the arc $h([-1,1]\times \{0\})$ the \emph{border} of
the semi-flow box $M$. The continuity of $\Phi$ implies that,
although the border of a semi-flow box needs not be a semi-orbit
of $\Phi$, it is the union set of some of its semi-orbits. As it
is well known, if $u$ is regular, then it is neighboured by a flow
box.

As has just been said, our aim is to characterize topologically
the $\omega$-limit sets of analytic vector fields $f$ defined on
non-empty open subsets $O$ of $\EE$. To do this we assume that $f$
can be $C^\infty$-extended to the whole $\EE$ by adding singular
points at $\EE\setminus O$: in view of Theorem~\ref{analytic-1}
below, this involves no loss of generality (because it is possible
to multiply $f$ by a positive factor so that the resultant vector
field $\tilde{f}$ has this property, when observe that the
solutions of the differential equation $u'=\tilde{f}(u)$, when
seen as subsets of $O$, are the same as those of $u'=f(u)$). Thus,
when speaking about $\omega$-limit sets of $f$, we are in fact
referring to the $\omega$-limit sets of the flow $\Phi$ associated
to the extension of $\tilde{f}$ to $\EE$. Of course, it is
sufficient to consider the case when $O$ is a \emph{region} (that
is, open and connected), and, as an additional simplification, we
will assume that the complementary of $O$ is \emph{totally
disconnected} (that is, all components of $\EE\setminus O$ are
singletons), because any region of $\EE$ is homeomorphic (in fact,
analytically diffeomorphic) to a region of $\EE$ of this type: see
Proposition~\ref{equiv}(i) and Theorem~\ref{analytic-0}.

A topological space homeomorphic to $[0,1]$, $\R$, the unit
circumference $\mathbb{S}^1=\{z\in \mathbb{C}: |z|=1\}$, the unit
ball $\mathbb{D}^2=\{z\in \mathbb{C}: |z|\leq 1\}$ or $\R^2$ will
be called, respectively, an \emph{arc} (its \emph{endpoints} being
the points mapped by the homeomorphism to $0$ and $1$), an
\emph{open arc}, a \emph{circle}, a \emph{disk}  or an \emph{open
disk}.

We say that a topological space $X$ is \emph{pathwise connected}
if for any $x,y\in X$ there is a continuous map $\varphi:[0,1]\to
X$ such that $\varphi(0)=x$ and $\varphi(1)=y$. Such a map  is
called a \emph{path} (from $x$ to $y$). If additionally, for any
$x,y\in X$, there is an arc in $X$ having $x$ and $y$ as its
endpoints, then $X$ is called \emph{arcwise connected.} When $X$
is Hausdorff, these turn out to be equivalent notions, see
\cite[Corollary~31.6, p. 222]{WILLARD}.

A compact connected Hausdorff space is called a \emph{continuum},
and a locally connected metric continuum is called a \emph{Peano
space}. The Hahn-Mazurkiewicz theorem establishes that a
(non-empty) continuum is a Peano space if and only if it is the
continuous image of the interval $[0,1]$ \cite[Theorem~2, p.
256]{KU}. Hence any Peano space is pathwise (arcwise) connected.
Moreover, it is \emph{locally arcwise connected} as well, that is,
for any $\epsilon>0$ there is $\delta>0$ such that, whenever
$v,w\in X$ and $0<d(v,w)<\delta$, there is an arc  with endpoints
$v,w$ whose diameter is less than $\epsilon$ \cite[Theorem~2, p.
253 and Theorem~1, p. 254]{KU}.

%\begin{remark}
%This is of no consequence here, but a path (that is, a topological
%space which is the continuous image of the interval $[0,1]$) and a
%Peano space are not the same thing because a path needs not be
%Hausdorff. For instance the ``double-zero'' topological space
%$(0,1]\cup {0,0'}$, whose base consists of the sets $\{0\}\cup
%(0,a)$, $\{0'\}\cup (0,a)$, $(a,b)$ and $(a,1]$, is a path but not
%a Peano space.  By the way, observe that  $0$ and $0'$ cannot be
%connected by an arc.
%\end{remark}

A pathwise connected space $X$  whose fundamental group is trivial
(that is, for any  path $\varphi:[0,1]\to X$ with
$\varphi(0)=\varphi(1)=x$ there is a continuous map $F:[0,1]\times
[0,1]\to X$ such that $F(t,0)=\varphi(t)$ and $F(t,1)=x$ for any
$t$) is called \emph{simply connected}. As shown in
\cite[Proposition~3.2, p. 10]{GREENBERG}), simply connectedness is
equivalent to contractibility: $X$ is said to be
\emph{contractible} if there are $p\in X$ and a  continuous map
$G:[0,1]\times X\to X$ such that $G(0,x)=x$ and $G(1,x)=p$ for any
$x$. It is well known (see, e.g, \cite[Theorem~13.11, p.
274]{RUDIN}) that if  $\emptyset \subsetneqq X\subsetneqq\EE$ is a
region, then $X$ is simply connected if and only $\EE\setminus X$
is connected and if and only if $X$ is an open disk. The
equivalence between simply connectedness of $X$ and connectedness
of $\EE\setminus X$ holds as well when $X\subset \EE$ is a Peano
space, see \cite[Proposition~4.1]{KRRZ}.

Let $E_X$ be the set of points of a Peano space $X$ admitting an
open arc as a  neighbourhood. If $E_X$ is dense in $X$, then $X$
is called a \emph{net}, each component of $E_X$ is called an
\emph{edge} of $X$, and the points of $E_X$ and  $X\setminus
E_{X}$ are respectively called the \emph{edge points} and the
\emph{vertexes} of $X$. Any edge $E$ of a net $X$ is either a
circle (when $E=X$) or an open arc. In this last case $\Cl E$ is
either $E$ plus one vertex of $X$ (and then we get a circle) or
$E$ plus two vertexes of $X$ (and then we get an arc).

\begin{remark}
The previous statements can be proved as follows. Let $E$ be
an edge of $X$, fix $x\in E$, find open arcs $A,B$ in $E$
neighbouring $x$ with $\Cl A\subset B$ and let $p$ and $q$ be the
endpoints of $\Cl A$. Then $X\setminus A$ is trivially locally
connected.

Assume first that this set is connected, hence a Peano space. Then
there is an arc $C\subset X\setminus A$ with endpoints $p$ and
$q$. If $C\subset E$, then, by connectedness, $E$ equals the
circle $C\cup A$. Otherwise, there is a arc $C_p\subset C$ with
endpoints $p$ and $v$, and an arc $C_q\subset C$  with endpoints
$q$ and $w$, such that $v$ and $w$ are vertexes of $X$ and both
$C_p\setminus \{v\}$ and $C_q\setminus \{w\}$ are included in $E$.
Again using the connectedness of $E$, if  $v=w$ then $E\cup \{v\}$
is the circle $C\cup A$, while if $v\neq w$ then $E\cup \{v,w\}$
is the arc $A\cup C_p\cup C_q$.

If $X\setminus A$ is not connected, then it is the union of two
disjoint Peano spaces $V\ni p$ and $W\ni q$. We claim that $V$
(and similarly $W$) is not fully included in $E$. If this is not
true, then any point of $V$ except $p$ disconnects $V$ (otherwise
we could argue as in the above paragraph to find a circle in
$V\subsetneqq E$, then arriving at a contradiction), hence any
pair of points of $V$ disconnect $V$. By \cite[Theorem~2, p.
180]{KU}, $V$ is then a circle and again we get a contradiction.
Thus, there are points in $V$ which do not belong to $E$, and we
can construct an arc $C_p$ with endpoints $p$ and a vertex $v$ in
$V$, such that $C_p\setminus \{v\}\subset E$. Arguing similarly in
$W$ to find a vertex $w\in W$ and an arc $C_q$ with endpoints $q$
and $w$ and such that $C_w\setminus \{w\}\subset E$, we conclude
as before that $E\cup \{v,w\}$ is the arc $A\cup C_p\cup C_q$.
\end{remark}

When the number of edges (and then of vertexes) of a net is finite
it is called a \emph{graph}. If a graph  includes no circles, then
it is called a \emph{tree}; more generally, a Peano space
including no circles is called a \emph{dendrite}. If a tree $X$
has $n$ edges, then it has $n+1$ vertexes: if, moreover, there is
a vertex $c$ belonging to the closure of all its edges, then $X$
is called an ($n$-)star with \emph{center} $c$ and
\emph{endpoints} all other vertexes of $X$. In this scenario, the
edges of $X$ are also said to be the \textsl{branches} of the
star. Strictly speaking, this only makes senses if $n\neq 2$ (if
$n=1$, then $X$ becomes an arc and we can choose as its ``center''
any of its endpoints). Yet, for notational convenience, an arc
will also be referred to as a ``$2$-star'', when all points except
the endpoints are considered to be ``centers'', and get the
corresponding ``edges'' after taking out a ``center'' and both
endpoints. Finally, also by convention, a single point is called a
\emph{$0$-star}, its center being the point itself. If $X$ is a
topological space and $p$ is a point in $X$ neighboured by an
$n$-star with $p$ as its center, then we say that $p$ is a
\textit{star point of $X$ (of order $n$)}, and also an
\emph{$n$-star point of $X$}. Note that the order of a star point
is unambiguously defined and that if $X$ is a net, then $E_X$ is
just the set of $2$-star points of $X$.

We say that $X$ is a \emph{thick arc} if there are pairwise
disjoint subintervals $\{(c_n-\delta_n,c_n+\delta_n)\}_n$ of
$[0,1]$ (with $(c_n)_n$ and $(\delta_n)_n$ sequences in $(0,1)$)
such that $X$ is homeomorphic to $[0,1]\cup \bigcup_n \{z\in
\mathbb{C}:|z-c_n|\leq \delta_n\}$. The points mapped by the
homeomorphism to $0$ and $1$ are, again, called the
\emph{endpoints} of $X$, and are also said to be a  \emph{proper
pair of endpoints} of $X$. In this way we deal with a possible
ambiguity, because the endpoints of a thick arc are not uniquely
defined when $c_n - \delta_n=0$ and/or $c_n + \delta_n=1$ for some
$n$. If the above family of intervals is empty (respectively, only
consists of the interval $(0,1)$), then a thick arc becomes an arc
(respectively, a disk).

The next one is the most important notion of this paper.

\begin{definition}
We say that $\emptyset\subsetneqq A\subsetneqq\EE$ is a
\emph{shrub} if it is a simply connected Peano space.
\end{definition}

Single points, arcs, disks and, in general, thick arcs in $\EE$,
are the simplest examples of shrubs, and dendrites are shrubs as
well \cite[Theorem~3, p. 375 and Corollary~7, p. 378]{KU}. If $A$
is a shrub, then each disk in $A$ which is not included in a
larger disk is called a \emph{leaf (of $A$)}. If all points of an
arc in $A$, except its endpoints, are $2$-star points of $A$, then
it is called a \emph{sprig}. A point of a shrub $A$ may be an
\emph{interior point} (if it belongs to $\Inte A$), an
\emph{exterior point} (if it belongs to the boundary of a leaf and
does not disconnect $A$), a \emph{sprig point} (if it is a point,
but not an endpoint, of a sprig), or, otherwise, an \emph{bud}. If
a bud disconnects $A$, then it is called a \emph{node}, and
otherwise a \emph{tip}. Therefore, nodes and sprig points are the
of $A$ disconnecting it. See Figure~\ref{figura1}. Clearly, the
set of buds of $A$ is closed. A thick arc $B$ in $A$ is called a
\emph{stem} when, whenever an interior point, an exterior point or
a sprig point belongs to $B$, the corresponding leaf or sprig is
included in $B$. If a shrub $A$ is a union of finitely many
leaves, then it is called a \emph{cactus}. If  $A$ is the union of
a cactus $D$ and $m$ sprigs, all of them having some endpoint in
$D$, we call $A$ an \emph{$m$-prickly cactus}.

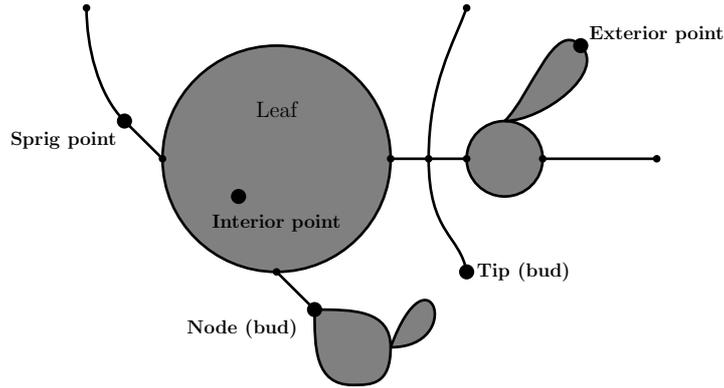
\begin{figure}
    \centering
\begin{tikzpicture}[scale=0.5]

 \filldraw[line width=1, fill=gray] (5,6) circle (3);
 \draw[line width=1] (8,6) -- (10,6);
 \filldraw[line width=1, fill=gray] (11,6) circle (1);
 \draw[line width=1] (12,6) -- (15,6);
\fill (8,6) circle (0.1); \fill (10,6) circle (0.1); \fill (12,6)
circle (0.1); \fill (15,6) circle (0.1);
 %\filldraw[line width=1, fill=gray] (13.5,6) circle (0.5);
 %\filldraw[line width=1, fill=gray] (14.25,6) circle (0.25);
 %\filldraw[line width=1, fill=gray] (14.75,6) circle (0.25);
 %\filldraw[line width=1, fill=gray] (15.25,6) circle (0.25);
 %\draw[line width=1] (15.5,6) -- (16,6);
 %\fill (16.5,6) circle (0.15);
 %\fill (17,6) circle (0.15);
 %\fill (17.5,6) circle (0.15);
%\fill[red] (15,6) circle (0.2);
%\fill (12,6) circle (0.2);
%\fill (10,6) circle (0.2);
 %\draw[line width=1]
     %(3,12) .. controls +(300:1) and +(180:1) ..  (5,9)
           %.. controls +(0:1) and +(225:1) .. (7,11);
 %\fill[red] (3,12) circle (0.2);
 %\fill[red] (7,11) circle (0.2);
 \draw[line width=1]
     (0,10) .. controls +(270:1) and +(135:1) ..  (1,7)
                .. controls +(315:1) and +(135:1) .. (2,6);
\fill (0,10) circle (0.1); \fill (1,7) circle (0.2); \fill (2,6)
circle (0.1);
 \draw[line width=1] (5,3) -- (6,2);

 \filldraw[line width=1, fill=gray]
     (6,2) .. controls +(270:1) and +(180:1) ..  (7,0)
           .. controls +(0:0.5) and +(270:1) .. (8,1)
           .. controls +(90:1) and +(0:1) .. (6,2);
 %\fill[red] (5,3) circle (0.2);
 \fill (6,2) circle (0.2);
 \filldraw[line width=1, fill=gray]
     (8,1) .. controls +(0:2) and +(70:3) ..  (8,1);
 \filldraw[line width=1, fill=gray]
     (11,7) .. controls +(45:1) and +(135:1) ..  (13,9)
            .. controls +(315:1) and +(0:1) ..  (11,7);
 \fill (13,9) circle (0.2);
 %\filldraw[line width=1, fill=gray]
     %(13,9) .. controls +(10:3) and +(40:2) ..  (13,9);
 %\filldraw[line width=1, fill=gray]
     %(13.5,5.5) .. controls +(225:3) and +(315:2) ..  (13.5,5.5);
 \draw[line width=1]
     (10,3) .. controls +(100:1) and +(270:2) ..  (9,6)
            .. controls +(90:2) and +(250:1) ..  (10,10);
\fill (10,3) circle (0.2); \fill (5,3) circle (0.1); \fill (9,6)
circle (0.1); \fill (10,10) circle (0.1);
 %\filldraw[line width=1, fill=gray] (9,9.5)-- (11,10.5);
 %\fill[red] (10,3) circle (0.2);
 %\fill[red] (9,9.5) circle (0.2);
 %\fill[red] (10,10) circle (0.2);
 %\fill (11,10.5) circle (0.2);

 %\fill[green, opacity=0.5] (2,6) circle (0.3);
 %\fill[green, opacity=0.5] (8,6) circle (0.3);

\node[scale=0.6] at (11.5,3) {\textbf{Tip (bud)}};
\node[scale=0.7] at (5,7.3) {Leaf}; \fill (4,5.0) circle (0.2);
\node[scale=0.6] at (5,4.3) {\textbf{Interior point}};
\node[scale=0.6] at (4.1,1.5) {\textbf{Node (bud)}};
\node[scale=0.6] at (-0.6,6.5) {\textbf{Sprig point}};
\node[scale=0.6] at (15.0,9.3) {\textbf{Exterior point}};

\end{tikzpicture}
\caption{\label{figura1} The different parts of a shrub.}
\end{figure}

If $A$ is a shrub, then all components $\{R_j\}_j$ of $\Inte A$
are open disks (because $R=\EE\setminus A$ is connected, hence
$\EE\setminus R_j= R\cup \Bd A\cup \bigcup_{j'\neq j} R_{j'}$ is
connected as well). If fact, more is true: their closures $D_j=\Cl
R_j$ are disks. Therefore, the leaves of $A$ are exactly the disks
$D_j$. As some consequences, any circle in $A$ must be included in
one of its leaves, distinct leaves of a shrub can have at most one
common point, and if shrub has no leaves, then is a dendrite.

\begin{remark}
To prove that $D_j$ is a disk it is enough to show, by
\cite[Remark~14.20(a), p. 291]{RUDIN}, that if a sequence
$(u_n)_{n=1}^\infty$ of points in $R_j$ converges to a point $u\in
\Bd R_j$, then there is a path in $D_j$ monotonically passing
through the points $u_n$ and ending at $u$ (that this, there is a
continuous map $\varphi:[0,1]\to D_j$ and numbers $0\leq
t_1<t_2<\cdots$, $t_n\to 1$,  such that $\varphi(t_n)=u_n$ for all
$n$ and $\varphi([0,1))\subset R_j$). This last statement is a
direct consequence of the following fact: if $\epsilon>0$, then
there is $\delta>0$ such that, whenever $v,w\in R_j$ and
$0<d(v,w)<\delta$, we can find  an arc in $R_j$ with endpoints
$v,w$ whose diameter is less than $\epsilon$. Certainly, such a
$\delta>0$ exists, due to the local arcwise connectedness of $A$,
except than we cannot guarantee that the small arc connecting $v$
and $w$, call it $L$, is fully contained in $R_j$. One thing, at
least, is sure: $L\subset D_j$. Otherwise, we could easily
construct a circle $C\subset A$ intersecting both $R_j$ and
$A\setminus D_j$, and simply connectedness forces that one of the
open disks enclosed by $C$ is included in $A$, which contradicts
that $R_j$ is a component of $\Inte A$. Thus, $L\subset D_j$ and,
similarly as above, we can construct a circle $C'$ in $D_j$
including all points of $L\cap \Bd R_j$. Since $C'$ encloses an
open disk fully included in $D_j$ (thus, indeed, in $R_j$), it is
easy to modify slightly $L$ so that the resultant arc $L'$ still
has diameter less than $\epsilon$, and $v$ and $w$ as its
endpoints, and additionally satisfies $L'\subset R_j$.
\end{remark}

\begin{definition}
We say that a shrub $A$ is \emph{realizable} if its set of buds is
totally disconnected.
\end{definition}

\begin{remark}\label{shrubsRegular}
If $A$ is a shrub, then $\Bd A$ is a Peano space \cite[Theorem~4,
p. 512]{KU}. Therefore, if $A$ is a realizable shrub, then $\Bd A$
is a net, the buds of of $A$ are the vertexes of $\Bd A$, and the
exterior and sprig points of $A$ are the edge points of $\Bd A$.
\end{remark}

\begin{definition}
Let $A$ be a shrub.
 \begin{itemize}
 \item We say that a bud $u$ of $A$ is \emph{odd}
if either $u$ is not a star point of $\Bd A$ or $u$ is in no leaf
of $A$ and, for some odd positive integer $n$, $u$ is a star point
of $\Bd A$ of order $n$.
 \item Let $K$ be a maximal connected union of leaves of $A$. We say that $K$
is an \emph{odd cactus (of $A$)} if there is an $n$-prickly cactus
neighbouring $K$ in $A$  for some odd number $n$.
 \end{itemize}
\end{definition}

\begin{remark}
Clearly, the set of odd buds of a shrub is closed, and a set
consisting of all odd buds of a shrub and one point from each of
its odd cactuses, is closed (and totally disconnected if the shrub
is realizable) as well. On the other hand, observe that if the set
of odd buds of a shrub is totally disconnected, then it is
realizable.
\end{remark}

We are ready to state our main results:

\begin{maintheorem}
 \label{teoA}
Let $O\subset \EE$ be a region such that $T=\EE\setminus O$ is
totally disconnected. Let $f$ be a $C^\infty$-vector field on
$\EE$ which is analytic on $\EE\setminus T$. Then any
$\omega$-limit set of $f$ is the boundary of a shrub. Moreover,
all odd buds of the shrub are contained in $T$ (hence it is
realizable) and every odd cactus of the shrub must intersect $T$.
\end{maintheorem}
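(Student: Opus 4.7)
The argument falls naturally into two halves. First, I would prove that $\Omega=\omega_\Phi(u_0)$ is the boundary of a shrub, relying only on the $C^\infty$-regularity of $f$ on the whole of $\EE$. Second, I would promote this to the full statement by extracting, from the analyticity of $f$ on $\EE\setminus T$, the parity restrictions on odd buds and odd cactuses.

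For the first half, the starting point is Vinograd's theorem. Since $\Phi$ is a continuous flow on the sphere, Vinograd provides a simply connected region $R\subsetneqq\EE$ with $\Omega=\Bd R$; setting $A=\EE\setminus R$ yields $\Omega=\Bd A$. To conclude that $A$ is a shrub I need $A$ to be a simply connected Peano space. Connectedness and compactness are immediate. Local connectedness rests on two ingredients: $\Bd R$ is itself a Peano space (a classical complement to Vinograd), and the interior components of $A$ are open disks whose closures are disks, as in the Remark following the definition of shrub in the excerpt. Together, these give local connectedness of $A$ at both boundary and interior points. Once $A$ is known to be a Peano space, simple connectedness of $A$ is equivalent to connectedness of $\EE\setminus A=R$, which holds by construction.

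The crux is the analyticity statement, which I would prove by two contradiction arguments rooted in the local theory of analytic planar vector fields. Suppose first that $u$ is an odd bud of $A$ lying outside $T$, and choose a small open disk $U\subset\EE\setminus T$ around $u$ on which $f$ is analytic. A regular point of $\Phi$ on $\Omega$ is the centre of a flow box, and a standard argument forces $\Omega\cap U$ to coincide with an arc segment of the orbit through $u$; this makes $u$ a $2$-star edge point of $\Bd A$, in particular a star point of even order, contradicting odd-bud status. Hence $f(u)=0$. After reducing to the case in which $u$ is an isolated zero (the case of an analytic curve of zeros through $u$ is handled separately, using that factoring out a defining function of the curve converts $f$ into a vector field whose flow has the same orbit structure away from the curve), the classical Bendixson sectorial decomposition describes $U$, minus the separatrices at $u$, as a finite union of hyperbolic, parabolic and elliptic sectors. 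A sector-by-sector analysis should then show that the $A$-sectors pair up so as to make $u$ a star point of $\Bd A$, and that, when $u$ belongs to no leaf of $A$, no elliptic or parabolic sector contributes to $A$, so the local star order at $u$ must be even. Both conclusions contradict $u$ being an odd bud.

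The cactus case reuses the same sectorial technology along a closed curve instead of at a point. Suppose $K$ is an odd cactus of $A$ with $K\cap T=\emptyset$; then $f$ is analytic on an open neighbourhood of the compact set $K$, and some $n$-prickly cactus neighbouring $K$ attaches $n$ sprigs to $\Bd K$ along $n$ buds of $A$ which, by the previous paragraph, are analytic singularities of even local order. A Poincar\'e-index style count that tracks how the sprig attachments interleave with the separatrices emanating along $\Bd K$, together with the even-order constraint at each attachment point, forces $n$ to be even, contradicting the odd-cactus hypothesis. I expect the main obstacle throughout to be precisely this parity accounting: matching the topological star-order and prickly-cactus definitions against the analytic sectorial decomposition of $f$, while keeping careful track of which sectors lie inside $A$ and which inside $R$.
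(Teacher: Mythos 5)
Your proposal contains a genuine and serious gap at the very first step. You claim to establish that $\Omega$ is the boundary of a shrub ``relying only on the $C^\infty$-regularity of $f$'', citing as a ``classical complement to Vinograd'' the assertion that the boundary of the simply connected Vinograd region is automatically a Peano space. This is false, and in fact it is the central point that the whole paper is built around. Vinograd's theorem says precisely that \emph{every} boundary of a simply connected region in $\EE$ is realizable as the $\omega$-limit set of a continuous flow, and by Guti\'errez's smoothing theorem the same holds for $C^\infty$ flows; such boundaries need not be locally connected (the Warsaw circle is a standard example). Local connectedness of $\Omega$, and of $A=\EE\setminus R$, is exactly where analyticity must enter: the paper obtains it from the local $m$-star structure of $\omega$-limit sets of analytic vector fields (Theorem~\ref{analytic-4}), combined with the total disconnectedness of $T$ to rule out bad accumulation in $T$. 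Without an analytic input at this stage your first half simply does not go through, and nothing in the second half repairs it.

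The second half of your proposal is also looser than it looks, though the difficulties are of a different nature. Replacing Theorem~\ref{analytic-4} and the even-star parity of analytic sets (Theorem~\ref{analytic-2}) by a Bendixson sectorial decomposition is a plausible alternative route, but you are sweeping several delicate points into asides. The reduction to isolated singularities by ``factoring out a defining function of the curve of zeros'' is precisely the kind of step where the local even-order structure of analytic zero sets is hiding, and it needs care when the zero set branches at $u$ into several analytic arcs; the paper avoids this by using Theorem~\ref{analytic-2} directly and then running a semi-flow box contradiction to force one-sidedness, rather than classifying sectors. For the odd cactus case, the paper does not do a Poincar\'e-index count at all: it observes that $G=\Sing(\Phi)\cap L$ is a finite union of graphs locally even at every vertex except the $m$ boundary attachment points, and then invokes the elementary handshake parity $\sum r_i = 2k$. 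Your index-count proposal would need to justify why sprig attachments and separatrices interleave in the way you expect, which is not explained. In short: the first-half claim is wrong as stated and must be replaced by an analyticity-based local connectedness argument; the second-half outline is a conceivably workable alternative but leaves the hard parity bookkeeping unproved, whereas the paper resolves it by a short combinatorial lemma on graphs.
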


Conversely, we have:

\begin{maintheorem}
 \label{teoB}
Let $A\subset\EE$ be a shrub and let $T\subset A$ contain all odd
buds of $A$ and one point from each of the odd cactuses of $A$.
Then there are a homeomorphism $h:\EE\to\EE$, and a
$C^\infty$-vector field on $\EE$, analytic on $h(\EE\setminus T)$,
having the boundary of $h(A)$ as an $\omega$-limit set.
\end{maintheorem}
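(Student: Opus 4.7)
The plan is to construct, for a suitable homeomorphism $h:\EE\to\EE$, a $C^\infty$ vector field on $\EE$ which is analytic on $\EE\setminus h(T)$ and admits an orbit with $\omega$-limit equal to $\Bd h(A)$. First, I would use Proposition~\ref{equiv}(i) together with Theorem~\ref{analytic-0} to put $A$ and $T$ in a canonical position via $h$: the complement $R=\EE\setminus h(A)$ is a single open disk (since $h(A)$ is simply connected), the components $\{R_j\}_j$ of $\Inte h(A)$ are an at most countable family of open disks with leaf-closures $D_j=\Cl R_j$, the sprigs form an at most countable family of arcs, and $h(T)$ is totally disconnected. In the rest of the sketch I write $A$ and $T$ for their $h$-images.

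The construction then proceeds by decomposing $A$ along the tree-like combinatorial skeleton supplied by Remark~\ref{shrubsRegular}, which says that $\Bd A$ is a net whose vertexes are the buds and whose edges are the sprigs and the open arcs making up each $\Bd D_j$ minus the buds. On each leaf $D_j$ I would install a model analytic field on $\EE$ with an orbit in $R_j$ whose $\omega$-limit is $\Bd D_j$ and which is tangent to any sprig attached to $\Bd D_j$; these are the even-parity analytic building blocks already available from \cite{JL}. On the exterior disk $R$ I would build a \emph{master} orbit, analytic outside a neighbourhood of $T$, that spirals towards $\Bd A$ in such a way that it visits, in the correct combinatorial order, a shrinking fundamental system of neighbourhoods of every leaf and every sprig; the sprigs themselves are made invariant by prescribing the field to vanish along them. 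A careful matching across the buds yields a globally defined $C^\infty$ field on $\EE$: near an \emph{even} bud the local combinatorial data coincide with those produced by an analytic singularity of matching index, so the matching can be performed analytically.

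The main obstacle, and the reason the odd-buds/odd-cactuses hypothesis is indispensable, is the parity constraint on planar analytic singularities. No analytic field can realize the local configuration required at an odd bud, and likewise one cannot analytically glue the analytic models inside an odd cactus, because the local index forced by analyticity is incompatible with an odd star of sprigs or with the orientation that the master orbit imposes around an odd cactus. At every odd bud $u$ — which lies in $T$ by hypothesis — I would multiply the locally defined analytic pieces by a $C^\infty$ bump vanishing to infinite order at $u$; the product is $C^\infty$ at $u$, still analytic away from $u$, and has the prescribed local dynamics. Using the single $T$-point placed inside each odd cactus, the same device resolves the analogous global obstruction there. Finally I would verify that the resulting master orbit is forward-recurrent to every edge of $\Bd A$ but eventually leaves every compact neighbourhood of any point outside $\Bd A$, so that its $\omega$-limit equals $\Bd A$. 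The most technical part will be making the visiting schedule of the master orbit precise enough — especially in the presence of leaves and sprigs accumulating on each other — to guarantee that the $\omega$-limit is exactly $\Bd A$ and not a larger or smaller set.
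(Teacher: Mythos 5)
Your approach is genuinely different from the paper's, and it has two serious gaps.

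The paper reduces the dynamical problem to a purely analytic one. Proposition~\ref{realizing} shows that, given \emph{any} $C^\infty$ map $F:\EE\to\R$ which is analytic and non-vanishing on a simply connected region $O$ and vanishes on $\Omega=\Bd O$, an explicit vector field $f$ built from $F$ (and the stereographic angle) has $\Omega$ as an $\omega$-limit set; the conserved quantities $J_I$ of Lemma~\ref{realizing-1} force a generic orbit of $f$ to spiral outward and exhaust $O$, so its $\omega$-limit is all of $\Omega$ (Lemma~\ref{realizing-4}). Section~\ref{proofB} then reduces to producing the homeomorphism $h$ and the map $F$: it represents $\Bd h(A)\setminus h(T)$ as a locally finite union of segments and hypocycloid boundaries (Lemmas~\ref{analytic-segment}, \ref{analytic-hypo}, Theorem~\ref{bruhat}), with the odd-parity obstructions absorbed into the points of $T$ via the orientation bookkeeping of Lemma~\ref{goodchoice} and the collapsing construction of Lemma~\ref{semi-simple}. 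Your proposal instead tries to build the flow directly by installing analytic model fields leaf by leaf and driving a master orbit around them.

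The first gap is the step ``near an \emph{even} bud \dots the matching can be performed analytically.'' Real-analytic vector fields cannot be glued locally while staying analytic: by analytic continuation, two analytic fields that agree on a neighbourhood of a point must coincide on the whole connected component of the overlap of their domains, so a field assembled piecewise from independent analytic models will not be analytic on $\EE\setminus h(T)$ unless all pieces already arise from one global analytic object. The paper circumvents this precisely by first building one analytic function $F$ on $\EE\setminus h(T)$ (Theorem~\ref{bruhat} guarantees the union of its defining sets is analytic) and only then deriving $f$ from $F$. The second gap, which you yourself flag as ``the most technical part,'' is the visiting schedule of the master orbit: nothing in the sketch supplies a mechanism guaranteeing the $\omega$-limit equals exactly $\Bd h(A)$ rather than some smaller or larger set, especially in the presence of leaves and sprigs accumulating on one another. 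In the paper this is resolved once and for all by the first-integral argument in Lemmas~\ref{realizing-1} and \ref{realizing-4}; your proposal postpones it without offering a replacement.
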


\begin{remark}
In our proof of Theorem~\ref{teoB} we do not care whether the
points of the $\omega$-limit set are singular or regular for the
corresponding flow. The much more difficult problem of
constructing $\omega$-limit sets with prescribed sets of regular
and singular points will not be considered here. Still, note that
all sprig points of the shrub must be singular (this is a
consequence of Lemma~\ref{teoA-2}).
\end{remark}

\begin{remark}
 Observe that Theorem~\ref{teoB} is a kind of ``strong'' converse
 of Theorem~\ref{teoA} because $A$ needs not be
 realizable. For instance, enumerate the set of rationals in
 $[0,1]$ as $\{\frac{p_n}{q_n}\}_{n=1}^\infty$ (written as
 irreducible fractions) and let  $A'\subset \R^2$ be the dendrite
 consisting of the union of the arcs $B'=\{(x,0):x\in [0,1]\}$ and
 $B_n'=\{(\frac{p_n}{q_n},y):y\in [0,\frac{1}{q_n}]\}$,
 $n=1,2,\ldots$. Use the (inverse of the) stereographic projection
 to get the dendrite  $A$ in $\EE$ which is the union of the
 corresponding arcs $B$ and $\{B_n\}_{n=1}^\infty$, and realize
 that the set $T$ of (odd) buds in $A$ consists of the whole arc $B$ and
 all endpoints of the arcs $B_n$. From the proof of Theorem~\ref{teoB} it follows
 that there is an analytic vector field on $\EE\setminus
 T$ having $A$ as an $\omega$-limit set.
\end{remark}

\begin{maincorollary}
 \label{coroC}
Up to homeomorphisms, a set is an $\omega$-limit set of some
analytic vector field defined on $\EE$ except for a totally
disconnected complementary if and only if it is the boundary of a
realizable shrub.
\end{maincorollary}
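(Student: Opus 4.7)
The corollary is essentially a bookkeeping synthesis of Theorems~\ref{teoA} and~\ref{teoB}, with the remark identifying realizability with the total disconnectedness of (odd) buds serving as the bridge. There is no substantial additional content, so I would present it as two short arguments, one for each implication.

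For the forward direction, let $X$ be homeomorphic to an $\omega$-limit set $\omega_\Phi(u)$ of an analytic vector field $f$ defined on $\EE\setminus T$ with $T$ totally disconnected. As explained in the discussion preceding Theorem~\ref{teoA}, one may multiply $f$ by a positive analytic factor so that it extends to a $C^\infty$-vector field on $\EE$ whose singular set contains $T$, without changing the orbits (and hence the $\omega$-limit sets) in $\EE\setminus T$. Theorem~\ref{teoA} then applies and yields $\omega_\Phi(u)=\Bd A$ for some shrub $A\subset\EE$ all of whose odd buds lie in $T$. Since $T$ is totally disconnected, so is the set of odd buds of $A$; by the remark immediately following the definition of odd cactus, this already forces $A$ to be realizable. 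Thus $X$ is homeomorphic to the boundary of a realizable shrub.

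For the reverse direction, suppose $X$ is homeomorphic to $\Bd A$ for some realizable shrub $A\subset\EE$. By the same remark, the set $T$ obtained by taking all odd buds of $A$ together with one point from each odd cactus of $A$ is closed and totally disconnected. Applying Theorem~\ref{teoB} with this choice of $T$ supplies a homeomorphism $h:\EE\to\EE$ and a $C^\infty$-vector field on $\EE$, analytic on $h(\EE\setminus T)=\EE\setminus h(T)$, which admits $\Bd h(A)=h(\Bd A)$ as an $\omega$-limit set. Since $h(T)$ is the homeomorphic image of a totally disconnected set, it is itself totally disconnected, so the vector field belongs to the class under consideration. Finally, $h(\Bd A)$ is homeomorphic to $\Bd A$, and therefore to $X$, as required.

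No step is really an obstacle: all genuine analytic and geometric work is absorbed into Theorems~\ref{teoA} and~\ref{teoB}. The only subtle point worth stating explicitly is the equivalence used in both directions, namely that for a shrub the total disconnectedness of the set of buds and that of the set of odd buds are equivalent conditions; the non-trivial implication is supplied by the remark after the definition of odd cactus.
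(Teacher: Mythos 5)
The proposal is correct and takes the same route the paper intends (the paper offers no separate proof for Corollary~\ref{coroC}, presenting it as a direct consequence of Theorems~\ref{teoA} and~\ref{teoB} together with the realizability remark). Both directions are handled exactly as one would expect: the forward direction invokes the extension convention stated before Theorem~\ref{teoA} and the conclusion "(hence it is realizable)" there, and the reverse direction uses the remark that for a realizable shrub the set $T$ of odd buds plus one point per odd cactus is closed and totally disconnected, so Theorem~\ref{teoB} produces a vector field of the required class.
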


The paper is organized as follows. In Sections~\ref{secttop} and
\ref{sectana} we summarize a number of topological and analytical
results, mostly well known, which will be needed later.
Theorem~\ref{teoA} is proved in Section~\ref{proofA}. An
intermediate result, fundamental for the proof of
Theorem~\ref{teoB}, is shown in Section~\ref{proofint}. Then we
proceed to prove Theorem~\ref{teoB} in Section~\ref{proofB}.

\section{On the sphere homeomorphisms and the topology of shrubs}
 \label{secttop}

Throughout this paper, several intuitive (yet deep) topological
results from the topology of the sphere will be needed: in this
regard, an old but outstanding reference is \cite{KU}, and we will
cite it quite often. Among these results, the following ones may
not be as well known as the Jordan curve theorem, but they will be
implicitly used a number of times: if $K\subset \EE$ is compact
and totally disconnected, then there is an arc in $\EE$ including
$V$ \cite[Theorem~5, p. 539 (see also p. 189)]{KU}; if $B$ and
$B'$ are either arcs or circles in $\EE$, then there is a
homeomorphism $h:\EE\to \EE$ mapping $B$ onto $B'$
\cite[Corollary~2, p. 535]{KU}. A simple consequence of this: if
$O\subset \EE$ is open and $\EE\setminus O$ is totally
disconnected, then $O$ is a region.

The extension result concerning arcs and circles we have just
mentioned is a special case of a problem with a long tradition in
the literature (see the references in \cite{Adkisson3,
Knobelauch}): the study of conditions under which a homeomorphism
between two subsets of a manifold $M$ can be extended to a
homeomorphism of $M$ onto itself.  In particular, the cases when
$M$ is the plane or the sphere have been investigated in great
depth: \cite{Adkisson1, Adkisson2, Adkisson4, Gehman1, Gehman2}
and \cite[Section~61.V]{KU}. For instance, in \cite{Adkisson2},
necessary and sufficient conditions characterizing when two Peano
spaces $X_1,X_2 \subset \EE$ are \emph{compatible}, that is, there
is a sphere homeomorphism mapping $X_1$ onto $X_2$, are given. We
next explain the main result of \cite{Adkisson2}.

Let $A,B,C\subset \EE$ be three arcs having exactly one common
endpoint, and no other intersection point (hence $A\cup B\cup C$
is a $3$-star in $\EE$): then we say that $(A,B,C)$ is a
\textit{triod}. Two triods $(A_1,B_1,C_1)$ and $(A_2,B_2,C_2)$ are
said to have the \textit{same sense} if there is a homeomorphism
$f:\EE \to \EE$, homotopic to the identity, such that
$f(A_1)=A_2$, $f(B_1)=B_2$ and $f(C_1)=C_2$. (As usual, two
continuous maps $f,g:\EE\to \EE$ are said to be \emph{homotopic}
if there is a continuous map $H:[0,1]\times \EE\to \EE$ such that
$H(0,u)=f(u)$ and $H(1,u)=g(u)$ for any $u\in \EE$.) Two triods
which do not have the same sense are said to have \textit{opposite
sense}.

\begin{remark}
Every homeomorphism $f:\EE \to \EE$ is homotopic either to the
identity or to the antipodal map $a(u)=-u$. Indeed, any continuous
map $f:\EE \to \EE$ induces a homomorphism $f_{*}:H_2(\EE) \to
H_2(\EE)$, where $H_2(\EE)$ is the second homology group of $\EE$
\cite[pp. 110--111]{Hatcher}. Since $H_2(\EE)$, as a group, is
isomorphic to $(\mathbb{Z},+)$ \cite[Corollary~2.14, p.
114]{Hatcher}, there exists an integer $\deg(f)$, the so-called
\emph{degree} of $f$, such that $f_*(n)= n\deg(f)$ for every $n
\in \mathbb{Z}$; for example, the identity map has degree $1$
while the antipodal map has degree $-1$ \cite[p. 134]{Hatcher}.
Two homotopic continuous maps $f,g:\EE \to \EE$ have the same
degree \cite[Theorem~2.10, p. 120]{Hatcher} and, reciprocally, two
continuous maps $f,g:\EE \to \EE$ with the same degree must be
homotopic (this is a deep result proved by Hopf, see
\cite[Theorem~4.15, p. 361]{Hatcher}). To conclude, observe that
if $f$ is a homeomorphism, then the induced homomorphism $f_*$ is
a group isomorphism and $\deg(f)$ equals $-1$ or $1$. (In general,
given  two continuous maps $f,g: \EE \to \EE$, $(f \circ
g)_{*}=f_{*} \circ g_{*}$ and $\deg(f \circ g) = \deg(f) \deg(g)$
\cite[p. 134]{Hatcher}.)
\end{remark}

A simpler way to know when two triods have the same, or opposite
sense, is as follows. Assume that the common point $p$ of the arcs
of a triod $(A,B,C)$ is not the north pole $p_N$ and write
$\pi_N(A)=A'$, $\pi_N(B)=B'$, $\pi_N(C)=C'$ and $\pi_N(p)=p'$.
Then we say that $(A,B,C)$  is \emph{positive} when, after taking
an open euclidean ball $U$ of center $p'$ and radius $\epsilon>0$
small enough, there is $\theta_0\in \mathbb{R}$ such that the
first intersection points of the arcs $A',B',C'$ with $\Bd U$ can
be respectively written as $p'+\epsilon e^{\ii\theta_A},
p'+\epsilon e^{\ii\theta_B}, p'+\epsilon e^{\ii\theta_C}$, with
$\theta_0=\theta_A<\theta_B<\theta_C<\theta_0+2\pi$.  We say that
the triod is \emph{negative} when it is not positive. If $p=p_N$,
then we say that $(A,B,C)$ is \emph{positive} (respectively,
\emph{negative}) if $(a(A),a(B),a(C))$ is negative (respectively,
positive), with $a$ being the antipodal map. As it turns out (see
\cite[Theorem~8 and 9]{Adkisson2} and \cite{KLINE}), two triods
have the same sense if and only if they have the same ``sign'',
that is, they are both positive or both negative. As a
consequence, observe that if $(A,B,C)$ is an arbitrary triod in
$\EE$, including or not the north pole, then $(A,B,C)$ and
$(f(A),f(B),f(C))$ have opposite sense when $f$ is the antipodal
map and, in general, when $f$ is not homotopic to the identity
(because $a\circ f^{-1}$ is homotopic to the identity so
$(f(A),f(B),f(C))$ and $(a(A),a(B),a(C))$ have the same sense). In
other words, if $f:\EE \to \EE$ is a homeomorphism, then $(A,B,C)$
and $(f(A),f(B),f(C))$ have the same sense if and only $f$ is
homotopic to the identity.

Let $g:X_1 \to X_2$ be a homeomorphism between two Peano spaces
$X_1,X_2 \subset \EE$. We say that $g$ \emph{preserves senses}
(respectively, \emph{reverses senses}) if a triod $(A_1,B_1,C_1)$
in $X_1$ is positive if and only if the triod
$(g(A_1),g(B_1),g(C_1)$ in $X_2$ is positive (respectively,
negative). More generally, we say that $g$ \emph{preserves the
geometrical configuration} if either it preserves senses, or
reverses senses. We are now ready to state the main result of
\cite{Adkisson2}:

\begin{theorem}
 \label{exthomeo}
Let $X_1,X_2\subset \EE$ be Peano spaces. Then they are compatible
if and only if there is a homeomorphism $g:X_1\to X_2$ preserving
the geometrical configuration. Moreover, in this case there exists
a homeomorphism $h:\EE \to \EE$ such that $h(u)=g(u)$ for every $u
\in X_1$.
\end{theorem}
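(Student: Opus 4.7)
The forward direction is quick and I would dispatch it first. Given a sphere homeomorphism $h:\EE\to\EE$ with $h(X_1)=X_2$, I put $g=h\!\restriction\!X_1$, which is clearly a homeomorphism onto $X_2$. By the earlier remark on degree, $h$ is homotopic either to $\Id$ or to the antipodal map $a$. In the first case, for every triod $(A,B,C)\subset X_1$ the image triod $(g(A),g(B),g(C))$ has the same sense, so $g$ preserves senses; in the second case it reverses senses. Either way $g$ preserves the geometrical configuration, and the moreover clause is automatic.

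For the converse, by postcomposing $g$ with the antipodal map if necessary I may assume that $g$ preserves senses, and the task reduces to extending $g$ to a homeomorphism of $\EE$. My plan is to approximate $X_1$ from inside by an increasing sequence of finite nets $G_1\subset G_2\subset\cdots\subset X_1$ with $\bigcup_n G_n$ dense in $X_1$ (this uses that a Peano space is locally arcwise connected, so short arcs joining points of a finite dense subset do the job), construct sphere homeomorphisms $h_n:\EE\to\EE$ extending $g\!\restriction\!G_n$, and let $h$ be a suitable limit. For the base case I take $G_1$ to be a small tree or circle and extend $g\!\restriction\!G_1$ by the classical Schoenflies theorem. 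In the inductive step I add a single edge $E$ to $G_n$ joining two vertices $p,q$ (or a loop). The arc $g(E)$ lies in $X_2$; once it is fixed, the key constraint is that at each endpoint the cyclic order of the edges of $G_{n+1}$ must be carried by $h_{n+1}$ to the cyclic order of their images. Preservation of senses furnishes this compatibility: for any triod at $p$ involving $E$, the image triod at $g(p)$ has the same sign, so the new edge can be inserted in the unique cyclic slot consistent with $h_n$. Having chosen the insertion, I modify $h_n$ only in a small disk neighbourhood of $E$ missing $G_n$, applying Schoenflies on the resulting subregions, to produce $h_{n+1}$ that agrees with $h_n$ outside this neighbourhood and with $g$ on $G_{n+1}$.

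The main obstacle, and the step I expect to spend most effort on, is the passage to the limit: I need the sequence $(h_n)$ to converge uniformly to a homeomorphism $h$ with $h\!\restriction\!X_1=g$, not merely to a continuous surjection. I would control this by arranging at stage $n$ that all modifications take place in sets of diameter less than $2^{-n}$ (possible because $X_1$ is locally arcwise connected and the new edges can be chosen small), and by ensuring the analogous smallness in $X_2$ via uniform continuity of $g^{-1}$ on the compact set $X_2$. Uniform convergence of both $(h_n)$ and $(h_n^{-1})$ then gives a sphere homeomorphism $h$, and density of $\bigcup_n G_n$ in $X_1$ together with continuity of $g$ forces $h\!\restriction\!X_1=g$. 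The sphere is compact, so no issues with completeness arise; the only real subtlety is making the inductive diameter control survive the Schoenflies extensions, which can be secured by shrinking the disks used at each stage.
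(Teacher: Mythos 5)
The paper does not prove Theorem~\ref{exthomeo}: it is quoted as the main result of \cite{Adkisson2} (Adkisson and Mac Lane, 1940), so there is no in-paper proof to compare against. On its own merits your reconstruction has the right general shape---inductive extension over an increasing sequence of finite sub-nets via Schoenflies, then a limit---but it contains a genuine gap precisely at the step you yourself single out as the hard one.

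The gap is in the convergence argument. Passing from $h_n$ to $h_{n+1}$ by modifying $h_n$ only inside a small disk $V$ around $E$ controls $\sup_u d(h_n(u),h_{n+1}(u))$ only if $h_n(V)$ is itself small; but $V$ small does not force $h_n(V)$ small, because the modulus of continuity of $h_n$ is not under your control and degrades as $n$ grows. Invoking uniform continuity of $g$ or $g^{-1}$ bounds $\diam g(E)$, yet $g(E)$ is only one arc inside the disk $h_n(V)$, whose diameter can still be large; so neither $(h_n)$ nor $(h_n^{-1})$ is made uniformly Cauchy by your bookkeeping. The standard repair is a back-and-forth scheme alternating domain-side and range-side stages, each shrinking the support of the perturbation on its own side; without that interleaving (or some substitute forcing $h_n(V)$ small) the construction does not deliver a limit homeomorphism. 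A smaller but real omission: in the inductive step you assert that sense-preservation lets $E$ be inserted "in the unique cyclic slot consistent with $h_n$," but you must actually verify that the complementary region of $G_n$ containing $E\setminus\{p,q\}$ is carried by $h_n$ to the complementary region of $g(G_n)$ containing $g(E)\setminus\{g(p),g(q)\}$; this does follow from cyclic-order preservation at an endpoint of $E$, but it is not immediate and should be spelled out rather than asserted.
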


%Another extension result, of an altogether different nature, is
%provided by the proposition below.
%
%\begin{proposition}
% \label{extensiondensa}
%Let $C_1,C_2\subset \EE$ be totally disconnected compact sets and
%assume that there is a homeomorphism $g:\EE\setminus C_1\to
%\EE\setminus C_2$. Then there is a homeomorphism $h:\EE\to\EE$
%such that $h(u)=g(u)$ for every $u\in \EE\setminus C_1$.
%\end{proposition}
%
%\begin{proof}
%Clearly, it suffices to show that $g$ (and similarly $g^{-1}$) is
%uniformly continuous. But this follows from the fact that, if
%$\epsilon>0$ is given, then, due to properties of $C_1$ and $C_2$,
%there are families of disks $\{D_{1,i}\}_{i=1}^n$ and
%$\{D_{2,i}\}_{i=1}^n$, with $D_{1,i}\cap D_{1,j}=D_{2,i}\cap
%D_{2,j}=\emptyset$ whenever $i\neq j$, and with diameters less
%than $\epsilon$, such that $A_k=\EE\setminus \bigcup_{i=1}^n \Inte
%D_{k,i}$ does not intersect $C_k$, $k=1,2$, and $g$ maps
%homeomorphically  $A_1$ onto $A_2$.
%\end{proof}

Sometimes it is useful to see some subsets of $\EE$ as single
points without losing the topological structure of $\EE$. The
following result explains how to do it:

\begin{proposition}
\label{equiv} Let $\{C_i\}_i$ be a family of pairwise disjoint
continua in $\EE$. Assume that $\EE\setminus C_i$ is connected for
any $i$ and, additionally, that one of the following conditions is
satisfied:
 \begin{itemize}
 \item[(i)] There is an open set $O$ such that $\{C_i\}_i$ is the
 family of connected components of $\EE\setminus O$.
 \item[(ii)] The family $\{C_i\}_i$ is countable and (if infinite)
 the diameters of the sets $C_i$ tend to zero.
 \end{itemize}
Then, after defining the equivalence relation $\sim$ in $\EE$ by
$x\sim y$ if either $x=y$ or there is $i$ such that both $x$ and
$y$ belong to $C_i$, the quotient space $\Sigma:=\EE/\sim$ is
homeomorphic to $\EE$.
\end{proposition}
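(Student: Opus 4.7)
The proof will hinge on Moore's decomposition theorem: if $\mathcal{D}$ is an upper semi-continuous decomposition of $\EE$ into continua, none of which separates $\EE$, then the quotient $\EE/\mathcal{D}$ is homeomorphic to $\EE$. In our setting $\mathcal{D}$ consists of the continua $\{C_i\}_i$ together with the singletons $\{x\}$ for $x\notin\bigcup_i C_i$; the non-separation requirement is given for the $C_i$ by hypothesis and is trivial for singletons. The whole problem therefore reduces to checking upper semi-continuity: for every $D\in\mathcal{D}$ and every open $U\supset D$, we must produce an open $V$ with $D\subset V\subset U$ such that every element of $\mathcal{D}$ meeting $V$ is contained in $U$.

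At a singleton class $\{x\}$ the verification is routine. Under (i), $x$ lies in the open set $O$, so any $V\subset O\cap U$ works because every class meeting $V$ is itself a singleton inside $U$. Under (ii), fix $r>0$ with the open ball of radius $2r$ centred at $x$ inside $U$; only finitely many $C_j$ have diameter at least $r$, and $x$ lies in none of them, so shrinking $V$ to miss these finitely many continua while staying in the ball of radius $r/2$ works: any $C_j$ meeting $V$ either has diameter smaller than $r$ (and so sits in $U$) or is one of the finitely many avoided ones (impossible).

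The substantive verification is at a class $C=C_{i_0}$. Under (ii), set $\delta=\dist(C,\EE\setminus U)>0$; only finitely many $C_j$ with $j\neq i_0$ have diameter exceeding $\delta/3$, and these finitely many continua are all disjoint from $C$, so we can take an open $V$ with $C\subset V$, $V$ contained in the $(\delta/3)$-neighbourhood of $C$, and $V$ missing those finitely many continua. Any other $C_j$ meeting $V$ has diameter less than $\delta/3$ and therefore lies in the $(2\delta/3)$-neighbourhood of $C$, which is inside $U$. Under (i), the key input is that in the compact Hausdorff space $K=\EE\setminus O$ components coincide with quasi-components; since $K\setminus U$ is compact and disjoint from $C$, a standard argument produces a set $A$ which is clopen in $K$ with $C\subset A\subset K\cap U$. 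The disjoint compacta $A$ and $K\setminus A$ can then be separated in $\EE$ by an open $V\supset A$ with $V\subset U$ and $V\cap(K\setminus A)=\emptyset$; any $C_j$ meeting $V$ meets $A$ and, being connected, lies in $A\subset U$. Once upper semi-continuity is verified in both cases, Moore's theorem closes the argument. The main obstacle is the clopen-approximation step in case (i), which rests on the equivalence of components and quasi-components in compact Hausdorff spaces; beyond that, what remains is bookkeeping with diameters in case (ii) and citing Moore's theorem from a standard reference such as Kuratowski's monograph.
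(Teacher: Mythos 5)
Your proof is correct and follows essentially the same route as the paper: both reduce to a form of Moore's decomposition theorem for $\EE$, and both verify the required ``tameness'' of the decomposition by the quasi-component argument in compact Hausdorff spaces for case (i) and by diameter bookkeeping for case (ii). The only difference is cosmetic: you verify upper semi-continuity directly (the classical hypothesis of Moore's theorem), while the paper cites Kuratowski's equivalent characterization of quotients of $\EE$ via Hausdorffness of $\Sigma$, non-separation by points, and connectedness of preimages of connected sets; for an upper semi-continuous decomposition of a compact metric space into non-separating continua these conditions are interchangeable, and the substantive work (producing a clopen set $A$ in $\EE\setminus O$ around $C_{i_0}$ and missing $K\setminus U$, or avoiding the finitely many large $C_j$) is the same in both write-ups.
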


\begin{proof}
Let $\Pi:\EE\to \Sigma$ be the projection map, when recall that
$\mathcal{U}$ is open in $\Sigma$ if and only if
$\Pi^{-1}(\mathcal{U})$ is open in $\EE$. In view of
\cite[Theorem~8, p. 533]{KU} we are left to show:
 \begin{itemize}
 \item[(*)] $\Sigma$ is Hausdorff;
 \item[(**)] $\Sigma\setminus \{X\}$ is connected for any $X\in \Sigma$.
 \item[(***)] $\Pi^{-1}(\mathcal{C})$ is connected for any
 connected set  $\mathcal{C}\subset \Sigma$.
 \end{itemize}

To prove (*) we assume first that (i) holds. Let $X,Y\in \Sigma$,
$X\neq Y$. We must find disjoint open neighbourhoods
$\mathcal{U}(X)$ and $\mathcal{U}(Y)$ of $X$ and $Y$ in $\Sigma$.
If $X$ and/or $Y$ is a point from $O$ this is trivial because $O$
is open, so assume that both $X$ and $Y$ are components of
$\EE\setminus O$. Since $O$ is open, \cite[Theorem~2, p. 169]{KU}
implies that $X$ is the intersection of all open closed sets of
$\EE\setminus O$ (with respect to the topology of $\EE\setminus
O$) including it. In particular, it is possible to find disjoint
compact sets $A,B$ with $A\cup B=\EE\setminus O$, $X\subset A$,
$Y\cap B\neq \emptyset$. The connectedness of any set $C_i$
implies that either $C_i\subset A$ or $C_i\subset B$. Therefore,
$Y\subset B$. Find pairwise disjoint open sets $V\supset A$ and
$W\supset B$. Since, for any $i$, either $C_i\subset V$ or
$C_i\subset W$, we get that $\mathcal{U}(X)=\Pi(V)$ and
$\mathcal{U}(Y)=\Pi(W)$ are the neighbourhoods we are looking for.

Now we prove (*) assuming that (ii) holds. Given $X,Y\in \Sigma$,
$X\neq Y$, we first find disjoint open sets $V,W$ in $\EE$ with
$X\subset V$, $X\subset W$. Realize that the resultant set $V'$
after removing from $V$ the points from the components $C_i$ such
that $C_i\cap \Bd V\neq\emptyset$ is also open (here we need that
the diameters of the sets $C_i$ go to zero), and the same is true
for the analogously defined set $W'$. Then
$\mathcal{U}(X)=\Pi(V')$ and $\mathcal{U}(Y)=\Pi(W')$ are disjoint
open neighbourhoods of $X$ and $Y$ in $\Sigma$.

Statement (**) is immediate: since $\EE\setminus X$ is connected
by hypothesis, and $\Pi$ is continuous, $\Pi(\EE\setminus
X)=\Sigma\setminus \{X\}$ is connected as well.

Note finally that $\Pi$ is a closed map by (*). Then (***) follows
from \cite[Theorem~9, p. 131]{KU} and the fact that any $X\in
\Sigma$ is a connected subset of $\EE$.
\end{proof}

If $A$ is a shrub, then, as formerly said, $\Bd A$ is a Peano
space. This implies that if the family of leaves of $A$ is
infinite, then their diameters tend to zero \cite[Theorem~10, p.
515]{KU}, and if $u,v\in A$ are buds or exterior points of $A$,
then there is exactly one stem having them as a proper pair of
endpoints (to find such a stem, start from an arc $B$ in $A$
having $u$ and $v$ as its endpoints, add to it all the leaves
whose interiors are intersected by $B$, and realize that, due to
the simple connectedness, the intersection of each such leaf with
$B$ is a subarc of $B$). In this way, we extend the main property
of dendrites (any two points of a dendrite are joined by a unique
arc) to shrubs, which could be informally described as ``thick
dendrites''.

%It is easily seen that that if $(B_n)_{n=1}^\infty$ is a sequence
%of stems of $A$ with a common endpoint $p$ and $B_n\subset
%B_{n+1}$ for any $n$, then there is a stem $B$ including all of
%them (for instance, find  proper pairs $\{p,q_n\}$ of endpoints of
%$B_n$ and let $q$ be an accumulation point of the sequence
%$(q_n)$; then $B_{p,q}$ does the job). Now, a standard transfinite
%induction argument allows to prove that any stem of $A$ is
%included into a (not necessarily unique) maximal stem. Now, any
%endpoint of any maximal stem of $A$ will be called an
%\emph{endpoint} of $A$, and a family of endpoints of $A$
%intersecting each maximal stem of $A$ exactly at one proper pair
%of endpoints, will be called a \emph{proper family of endpoints}
%of $A$.

We say that a sequence $(S_k)_{k=1}^{n_0}$ of thick arcs in $\EE$,
$n_0\leq \infty$, is a \emph{skeleton} if for any $1\leq k\leq
n_0-1$ the thick arc $S_{k+1}$ intersect $\bigcup_{i=1}^k S_i$ at
exactly one endpoint of $S_{k+1}$. Observe that, in the finite
case $n_0<\infty$, by the Janiszewski theorem \cite[Theorem~7, p.
507]{KU}, the union set $\bigcup_{k=1}^{n_0} S_k$ is a shrub. A
simple consequence of separability and the ``dendrite-like''
structure of shrubs is that for any shrub $A$ there is a (not
necessarily unique) skeleton of stems whose union set is dense in
$A$: we call it a \emph{skeleton of $A$}. Observe that if $U$ is
this union set and $u\in A$ does not belong to $U$, then $u$ must
be a bud. Moreover, if $u$ belongs to a stem $B$ of $A$, then it
must be one of the endpoints of $B$. Otherwise we could assume,
due to the density of $U$ in $A$ and the local arcwise
connectedness of $A$, that there is $k$ such that $A_k$ contains a
pair of proper endpoints $v,w$ of $B$, which is impossible because
there would be two distinct stems having $v,w$ as a proper pair of
endpoints: one included in $A_k$ and $B$. As a consequence, $u$ is
a tip.

If, on the other hand, the closure of the union set of the thick
arcs of a skeleton $\Psi=(S_k)_{k=1}^{n_0}$ is a shrub $A$, then
we say that $\Psi$ is \emph{extensible} to $A$. As we have just
emphasized, any finite skeleton is extensible. The next
proposition deals with the infinite case.

\begin{proposition}
 \label{skeleton1}
Let $(S_k)_{k=1}^\infty$ be an infinite skeleton, write
$A_k=\bigcup_{i=1}^k S_i$, and assume that $\diam(A_k)\leq 1/2^k$
and there are circles $C_k$ (disjoint from all shrubs $A_{k'}$)
such that the region $R_k$ of $\EE\setminus C_k$ not intersecting
the shrubs contains all points $u\in \EE$ satisfying $d(u,A_k)\geq
1/2^k$. Then $(S_k)_{k=1}^\infty$ is extensible.
\end{proposition}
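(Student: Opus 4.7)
The plan is to show that $A:=\overline{\bigcup_{k=1}^{\infty}A_k}$ is a shrub --- that is, a nonempty, simply connected Peano space properly contained in $\EE$. Each $A_k=\bigcup_{i=1}^k S_i$ is a finite skeleton, hence a shrub by the Janiszewski theorem cited in the paragraph preceding Proposition~\ref{skeleton1}; in particular $A_k$ is compact and connected. Since the $A_k$ are nested, $\bigcup_k A_k$ is connected, and so is its closure $A$, which is also closed in $\EE$ and hence compact. For $A\subsetneq\EE$, each open region $R_k$ is nonempty and, by hypothesis, disjoint from every $A_{k'}$; its openness then yields $R_k\cap A=\emptyset$, so $A\neq\EE$.

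The crux is local connectedness. From the condition $R_k\supset\{u:\dist(u,A_k)\geq 1/2^k\}$ combined with the fact that each $A_{k'}$ lies in the component of $\EE\setminus C_k$ other than $R_k$, one directly obtains the Hausdorff estimate $A\subset\{u:\dist(u,A_k)\leq 1/2^k\}$. Together with the evident intended reading of the diameter hypothesis as $\diam(S_k)\leq 1/2^k$ (the literal $\diam(A_k)\leq 1/2^k$ would contradict $A_k\subset A_{k+1}$), this gives, for every $k$, the bound $\diam(\overline{T_k})\leq\sum_{m\geq k}\diam(S_m)\leq 1/2^{k-1}$, where $T_k$ denotes the \emph{subtree} rooted at $S_k$ in the natural tree structure of the skeleton (in which each $S_j$, $j\geq 2$, is assigned a parent $S_{p(j)}$, $p(j)<j$, containing the attachment point of $S_j$ to $A_{j-1}$). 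Given $\eps>0$, I would choose $N$ with $1/2^{N-1}<\eps/3$, cover the finite shrub $A_N$ by finitely many closed connected sets $F_1,\ldots,F_m$ of diameter $<\eps/3$ (possible because $A_N$ is already a Peano space), and enlarge each $F_i$ to $\widetilde F_i$ by adjoining the closures $\overline{T_{k_*}}$ over all \emph{first-generation} indices $k_*$ --- those with $N<k_*$ and $p(k_*)\leq N$ --- whose attachment point lies in $F_i$. A short argument using the shrinking diameters $\diam(\overline{T_{k_*}})\to 0$ shows that any limit point of $\bigcup\overline{T_{k_*}}$ outside of every $\overline{T_{k_*}}$ must lie in $A_N$; hence each $\widetilde F_i$ is closed, and together they cover $A$. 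Each $\widetilde F_i$ is also connected (being a union of connected sets meeting through $F_i$) and of diameter $<\eps$. By the standard characterization that a compact metric continuum admitting, for every $\eps>0$, a finite cover by connected sets of diameter $<\eps$ is locally connected, $A$ is a Peano space.

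For simple connectedness I would invoke the equivalence quoted in the introduction: a Peano subspace $X$ of $\EE$ is simply connected iff $\EE\setminus X$ is connected. To show $\EE\setminus A$ is connected, I would first establish $\EE\setminus A=\bigcup_k R_k$: one inclusion was established in the first paragraph, and conversely any $q\notin A$ has $\dist(q,A)>0$ and hence $\dist(q,A_k)\geq 1/2^k$ for all $k$ large, placing $q\in R_k$. Fixing any reference point $p\in\EE\setminus A$, one has $p\in R_k$ for all $k$ sufficiently large; so given any $q\in\EE\setminus A$, choosing $k$ large enough that both $p,q\in R_k$ and using the connectedness of $R_k$ shows that $p$ and $q$ lie in the same component of $\EE\setminus A$. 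The main obstacle is the local-connectedness step: handling the tree combinatorics of the skeleton and the diameter bookkeeping for the enlarged sets $\widetilde F_i$ is precisely where both the $\diam(S_k)\leq 1/2^k$ and $R_k$ hypotheses are essential, and where the argument must be carried out with care.
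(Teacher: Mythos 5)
Your argument is correct, and it takes a genuinely different route from the paper's. The paper proves that $A=\Cl\bigl(\bigcup_k A_k\bigr)$ is a Peano space by constructing, inductively, continuous surjections $\varphi_k:[0,1]\to A_k$ that form a uniformly Cauchy sequence (using the smallness of the $S_k$'s to control the modifications), and then taking $A=\varphi([0,1])$ as the continuous image of $[0,1]$ via Hahn--Mazurkiewicz; what you do instead is establish local connectedness directly through the ``property S'' criterion (a compact metric continuum which, for every $\eps>0$, can be finitely covered by connected sets of diameter $<\eps$ is locally connected), organizing the pieces via the natural rooted-tree structure of the skeleton and bounding the diameter of each subtree $T_k$ by $\sum_{m\geq k}\diam S_m$. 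The paper's route is slicker in that it sidesteps the tree combinatorics, but it leaves the inductive construction of the $\varphi_k$'s implicit, whereas your route makes all the bookkeeping explicit and is arguably easier to verify line by line. For the final step both arguments use $\EE\setminus A=\bigcup_k R_k$; the paper asserts this is ``clearly'' a region, and you supply the missing details (both inclusions, plus the fact that any two points of the complement land in a common disk $R_k$). Finally, you are right that the hypothesis $\diam(A_k)\leq 1/2^k$ is a typo --- since $A_k\subset A_{k+1}$ it would force every $A_k$ to be a singleton --- and that the intended reading, consistent with how the proposition is invoked in the proof of Proposition~\ref{finiterealizable} and with the inductive bound $d(\varphi_k(t),\varphi_{k'}(t))<1/2^{k-1}$ the paper quotes, is $\diam(S_k)\leq 1/2^k$.
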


\begin{proof}
The hypothesis on the diameters of the sets $A_k$ allows us to
construct, inductively, continuous onto maps $\varphi_k:[0,1]\to
A_k$ such that, if $k'\geq k$, then
$d(\varphi_k(t),\varphi_{k'}(t))< 1/2^{k-1}$ for all $t\in [0,1]$.
Therefore, the maps $\varphi_k$ converge uniformly to a continuous
map $\varphi:[0,1]\to \EE$, whose image $A=\varphi([0,1])$ is a
Peano space. Clearly, $\EE\setminus A$ is the region
$\bigcup_{k=1}^\infty R_k$. Hence $A$ is a shrub.
\end{proof}

\begin{remark}
 \label{skeleton0}
Regarding Proposition~\ref{skeleton1}, the following consequence
of Proposition~\ref{equiv} will be useful in
Subsection~\ref{casosimple}: if $A$ is a shrub and $\epsilon>0$,
then there is a circle $C$, disjoint from $A$, such that  the
region $R$ of $\EE\setminus C$ not intersecting $A$ contains all
points $u\in \EE$ satisfying $d(u,A)\geq \epsilon$.
\end{remark}

Let $A,A'$ be shrubs and assume that $\Psi=(S_k)_{k=1}^\infty$ and
$\Psi'=(S_k')_{k=1}^{n_0}$ are infinite skeletons of $A$ and $A'$.
Also, let $A_k=\bigcup_{i=1}^k S_i$ and $A_k'=\bigcup_{i=1}^k
S_i'$ for every $k$. We say that $\Psi$ and $\Psi'$ are
\emph{compatible} if there is a sequence of homeomorphisms
$h_k:A_k\to A_k'$ preserving the geometrical configuration and
such that each map $h_{k+1}$ extends $h_{k}$.

\begin{proposition}
 \label{skeleton2}
Assume that two shrubs $A$ and $A'$ admit compatible skeletons
$\Psi$ and $\Psi'$. Then $A$ and $A'$ are compatible.
\end{proposition}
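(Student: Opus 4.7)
The plan is to construct a homeomorphism $g : A \to A'$ that preserves the geometrical configuration and then invoke Theorem~\ref{exthomeo} to conclude that $A$ and $A'$ are compatible. I would begin by defining $g_0 : \bigcup_k A_k \to \bigcup_k A_k'$ as the common extension of the $h_k$'s; this is well-defined and bijective because each $h_{k+1}$ extends $h_k$ and each $h_k$ is a homeomorphism. Both its domain and its range are dense in the respective shrubs, since $(S_k)_{k=1}^\infty$ is a skeleton of $A$ and $(S_k')_{k=1}^{n_0}$ is a skeleton of $A'$ (in particular $n_0=\infty$ must hold, since a finite skeleton would force $A_k'=A'$ for large $k$ and then the bijectivity of $g_0$ would force $\bigcup_k A_k=A$).

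The main work is extending $g_0$ continuously to a map $g : A \to A'$. For $u \in A \setminus \bigcup_k A_k$, the discussion preceding Proposition~\ref{skeleton1} shows that $u$ is a tip of $A$. For each $k$ with $u \notin A_k$, let $V_k(u)$ be the connected component of $A \setminus A_k$ containing $u$; this is a decreasing sequence of subsets of $A$ whose closures meet $A_k$ at only finitely many buds or exterior points of $A_k$ (by the finiteness of the leaves of $A_k$ and the tree-like skeleton structure). Using density of $\bigcup_k A_k$ in the compact metric space $A$, together with the fact that $\Bd V_k(u)\subset A_k$, one checks that $\diam V_k(u)\to 0$. By the compatibility of the skeletons, $h_k$ extends to a sphere homeomorphism (Theorem~\ref{exthomeo}), and so it carries components of $\EE\setminus A_k$ bijectively to components of $\EE\setminus A_k'$; this gives a corresponding decreasing sequence of components $V_k'(u)$ of $A' \setminus A_k'$, and the same argument inside $A'$ yields $\diam V_k'(u)\to 0$. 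Hence $\bigcap_k \Cl V_k'(u)=\{u'\}$ for a unique $u'\in A'$, and I set $g(u)=u'$. Continuity of $g$ follows because if $v\in A$ is close to $u$ then, for large $k$, $v$ lies in $V_k(u)$ or in a neighbourhood of it in $A_k$, which $g$ sends into a small neighbourhood of $u'$. Running the same construction with the roles of $A$ and $A'$ exchanged supplies a continuous inverse, so $g$ is a homeomorphism.

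Finally, I would show that $g$ preserves the geometrical configuration. Given a triod $(A_1,B_1,C_1)$ in $A$ with common endpoint $p$, its sign depends only on an arbitrarily small neighbourhood of $p$ inside $A$. By local arcwise connectedness of $A$ and density of $\bigcup_k A_k$, one can find points $p_n,a_n,b_n,c_n$ in some $A_{k_n}$ converging to $p$ and to interior points of the three arcs near $p$, then join them by small arcs inside $A_{k_n}$ to produce a triod in $A_{k_n}$ whose sign agrees with that of $(A_1,B_1,C_1)$ for $n$ large. Since $h_{k_n}$ preserves senses by hypothesis, the image triod in $A_{k_n}'$ has the same sign, and passing to the limit via the continuity of $g$ shows $(g(A_1),g(B_1),g(C_1))$ has the same sign as $(A_1,B_1,C_1)$. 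Theorem~\ref{exthomeo} then yields the compatibility of $A$ and $A'$.

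The hard part will be Step 2, namely rigorously verifying that the diameters of both $V_k(u)$ and $V_k'(u)$ tend to zero; this is where the tree structure of a skeleton (each $S_{k+1}$ is attached to $\bigcup_{i=1}^k S_i$ at one point), the compactness of $A$, and the compatibility of the skeletons must all be combined, and is the only place where the skeleton hypothesis is used in an essential, non-combinatorial way.
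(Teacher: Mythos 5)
Your overall architecture --- build a homeomorphism $g:A\to A'$, check that it preserves the geometrical configuration, and invoke Theorem~\ref{exthomeo} --- is exactly the paper's. Your construction of $g$ via nested components $V_k(u)$ is a reasonable variant of the paper's, which instead extends each $h_k$ to a map $f_k:A\to A'$ (sending $u\in A\setminus A_k$ to the $h_k$-image of the point where an arc from $u$ first meets $A_k$) and shows the $f_k$ converge uniformly. Both constructions hinge on the same diameter estimate, which you correctly single out as the crux but do not prove; the paper sketches how it follows from simple connectedness, local arcwise connectedness and density of $\bigcup_k A_k$, so as written your Step~2 is still incomplete, though an acknowledged gap.

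The genuine problem is the final sense-preservation paragraph. ``Passing to the limit via the continuity of $g$'' is not a valid argument here: the sign of a triod is a discrete invariant, a limit of triods of a given sign can degenerate, and it is not even clear that one can construct approximating triods inside $A_{k_n}$ whose sign agrees with the original. The paper avoids all of this with a cleaner observation that you have missed: after shrinking the three arcs slightly so their endpoints land in $U=\bigcup_k A_k$, the whole triod is already contained in some single $A_k$. Indeed, neither the center of a triod nor any interior point of one of its arcs can be a tip of the shrub $A$ (were it a tip, one could produce a circle through that point, which would have to lie in a leaf, making the point interior or exterior and hence not a bud); and the only points of $A$ possibly outside $U$ are tips. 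Hence the shrunken triod lies in $U$, and by uniqueness of stems it lies in the sub-shrub $A_k$ once $k$ is large enough to contain all its endpoints. Then $h_k$ carries it directly to a triod of the same sign, and no limiting argument is required. You should replace the approximation paragraph by this direct argument.
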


\begin{proof}
For any $k$, let $h_k:A_k\to A_k'$ be as before and denote by $U$
and $U'$ the union sets of the stems of $\Psi$ and $\Psi'$. Note
that, since each homeomorphism $h_{k+1}$ extends $h_k$, either of
all them preserve senses or all of them reverse senses. We may
assume that the first case holds.

Extend continuously the maps $h_k$ to maps $f_k:A\to A'$ as
follows: if $u\in A\setminus A_k$, and $B$ is an arc with endpoint
$u$ and intersecting $A_k$ exactly at its other endpoint point
$v$, then we define $f_k(u)=h_k(v)$. The simple and local arcwise
connectedness of $A'$, and the density of $U'$, imply that, for
any $\epsilon>0$, there is $k=k_\epsilon$ such that  the diameter
of any arc connecting a point from $A\setminus A_k'$ to $A_k'$
must be less than $\epsilon$. From this, and again  the simple
connectedness of $A'$, $d(f_k(u),f_{k'}(u))<\epsilon$ for any
$k'\geq k$ and $u\in A$. Then the maps $f_k$ converge uniformly to
a continuous map $f:A\to A'$, and we can extend similarly the maps
$h_k^{-1}$ to maps $g_k:A'\to A$ which converge uniformly to a map
$g:A'\to A$. Since $g\circ f$ and $f\circ g$ map identically $U$
and $U'$ into themselves, and these sets are dense, $f$ is a
homeomorphism with inverse $g$.

To finish the proof we must show that $f$ preserves senses. Let
$u$ be the common point of the arcs of an arbitrary triod
$(K,L,M)$ in $A$. Then neither $u$ nor the other points from the
arcs (except maybe their endpoints) is a tip, which implies, using
if necessary some slightly smaller arcs, that $K\cup L\cup
M\subset U$. This means, in fact, that $K\cup L\cup M\subset A_k$
for some $k$. Then $(K,L,M)$ and
$(f(K),f(L),f(M))=(h_k(K),h_k(L),h_k(M))$ have the same sense.
\end{proof}

\section{Some useful results on analyticity}
 \label{sectana}

In the introduction we endowed $\EE$ with an analytic differential
structure via the stereographic projections $\pi_N$ and $\pi_S$.
Not that this really matters: this analytic structure is unique up
to analytic diffeomorphisms. In fact, a much stronger result,
following from \cite[Theorem~3]{GRAUERT},
\cite[Corollary~1.18]{WHITEHEAD} and \cite[Theorem~1.4]{HUMO}
after using as a modulus (in the terminology of \cite{HUMO}) the
distance map to the set $C=g^{-1}(C')$, holds:

\begin{theorem}
 \label{analytic-0}
If $S$ and $S'$ are analytic surfaces, $g:S\to S'$ is a continuous
onto map, and $C'$ is a closed subset of $S'$ such that the
restriction of $g$ to $\Omega=S\setminus C$ is a homeomorphism
between $\Omega$ and $\Omega'=S'\setminus C'$, then there is a
continuous onto map $h:S\to S'$ such that $h(u)=g(u)$ for any
$u\in C$ and the restriction of $h$ to $\Omega$ is an analytic
diffeomorphism between $\Omega$ and $\Omega'$. In particular, if
$S$ and $S'$ are homeomorphic, then they are analytically
diffeomorphic.
\end{theorem}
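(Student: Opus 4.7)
Set $C = g^{-1}(C')$ and $\Omega = S\setminus C$, so that $g$ restricts to a homeomorphism $\Omega\to\Omega' = S'\setminus C'$. The plan is to approximate this restriction by an analytic diffeomorphism so finely near $C$ that extending by $g|_C$ yields a continuous map on the whole of $S$. As a modulus of approximation I take the distance function $\delta(u) = d(u,C)$ (using any fixed Riemannian distance on $S'$), which is continuous on $\Omega$ and tends to $0$ as $u$ approaches $C$.

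I would carry out the approximation in two stages, both with this prescribed modulus. First, by Whitehead's Corollary~1.18, the homeomorphism $g|_\Omega:\Omega\to\Omega'$ can be replaced by a $C^\infty$-diffeomorphism $\phi:\Omega\to\Omega'$ with $d(\phi(u),g(u))<\delta(u)/2$ for every $u\in\Omega$. Second, by Grauert's Theorem~3, the smooth diffeomorphism $\phi$ can be approximated in the fine $C^\infty$-topology by an analytic diffeomorphism $\psi:\Omega\to\Omega'$; tuning the approximation I arrange $d(\psi(u),\phi(u))<\delta(u)/2$, so that $d(\psi(u),g(u))<\delta(u)$ throughout $\Omega$. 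Humo's Theorem~1.4 is precisely a relative approximation statement with a variable modulus that packages these two steps into a single application and delivers $\psi$ directly.

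Now define $h:S\to S'$ by $h|_\Omega = \psi$ and $h|_C = g|_C$. For continuity at a point $u_0\in C$, given $u\in\Omega$ one has $d(h(u),g(u_0)) \leq d(\psi(u),g(u)) + d(g(u),g(u_0)) < \delta(u) + d(g(u),g(u_0))$, and both summands vanish as $u\to u_0$ (the first by construction, the second by continuity of $g$); on $\Omega$ continuity of $h=\psi$ is automatic. On $\Omega$ the map $h$ is by construction an analytic diffeomorphism onto $\Omega'$; and since $g$ is onto with $g^{-1}(C')=C$, we have $h(C)=g(C)=C'$, hence $h(S)=S'$. The ``in particular'' assertion is the case $C=C'=\emptyset$, with any homeomorphism $g:S\to S'$ as input. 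The main obstacle, and the reason the three cited results are invoked together rather than more elementary smoothing statements, is the need for a quantitative control of the approximation by a function vanishing at $C$: without such a variable modulus, the glued map would fail to be continuous across $C$, and this is exactly the role played by choosing $d(\cdot,C)$ as modulus in Humo's theorem.
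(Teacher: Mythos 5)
Your proposal is correct and follows essentially the same route the paper intends: it uses the distance to $C=g^{-1}(C')$ as the modulus (in the Huebsch--Morse sense) to control the Whitehead smoothing and Grauert analytic approximation so finely near $C$ that gluing with $g|_C$ yields a continuous map. One small slip: the modulus $\delta(u)=d(u,C)$ is a distance computed in $S$ (since $u\in S$, $C\subset S$), not in $S'$, while the approximation estimates $d(\psi(u),g(u))<\delta(u)$ are in $S'$ --- two different metrics are in play, which is harmless but worth stating.
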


Here, by a \emph{surface} we mean a Hausdorff, second countable,
topological space which is locally homeomorphic to $\RR$. Note
that a surface needs not be compact.  An \emph{analytic surface}
is a surface equipped with an analytic differential structure.
Needless to say, a diffeormorphism between two analytic surfaces
is called \emph{analytic} when it is analytic after being locally
transported to the plane by the charts. Analytic maps from an open
subset $U$ of an analytic surface to $\R^m$, and analytic sets in
$U$, are defined in the obvious way.

For instance, if we endow $\mathbb{R}_\infty^2$, the one-point
compactification of $\RR$, with an analytic structure using as
charts the identity in $\RR$ and the inversion map
$z\rightarrowtail 1/z$ (here, of course, we are identifying $\RR$
and $\mathbb{C}$ via $(x,y)\leftrightarrow x+iy$ and meaning
$1/\infty=0$), then the extended stereographic projections
(writing $\pi_N(p_N)=\pi_S(p_S)=\infty$) provide analytic
diffeomorphisms between $\EE$ and $\mathbb{R}_\infty^2$.

In Theorems~\ref{analytic-1}-\ref{analytic-4} below, $O$ is an
open subset of $\EE$.  The first of them, a consequence of
\cite[Lemma~6]{WHITNEY}, allows us to extend local analytic vector
fields to the whole sphere still retaining $C^\infty$-regularity.

\begin{theorem}
 \label{analytic-1}
Let  $f:O\to \R^n$ be an analytic map. Then there is an analytic
map $\rho:\EE\to (0,\infty)$ such that $\rho f$ (after being
extended as zero outside $O$) is $C^\infty$ in the whole $\EE$.
\end{theorem}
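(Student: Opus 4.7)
The plan is to invoke Whitney's classical construction \cite[Lemma~6]{WHITNEY} after reducing to a local problem in $\RR$ via the stereographic projections $\pi_N,\pi_S$. The underlying idea is that a positive analytic function of the form $\rho=\exp(-1/g)$, with $g$ analytic, positive on $O$, and vanishing as one approaches $T:=\EE\setminus O$, vanishes to infinite order on $T$ and hence can damp out any analytic blow-up of $f$ and its derivatives near $T$.

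First, using a partition of unity subordinate to a cover of $\EE$ by stereographic charts, I would reduce the problem to the following local statement: given any open $\Omega\subset\RR$ with closed complement $T_0$ and any analytic $f:\Omega\to\R^n$, produce an analytic positive $\rho$ on $\Omega$ such that $\rho f$ extended by $0$ to $T_0$ is $C^\infty$ on $\RR$. The compactness of $\EE$ and the locality of $C^\infty$-smoothness make this reduction routine.

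The core construction is to first produce an auxiliary analytic function $g:\Omega\to(0,\infty)$ that approaches $0$ along $T_0$, with $g(x)$ controlled by a positive power of $\dist(x,T_0)$; such a $g$ is obtained by Whitney's real-analytic approximation of continuous positive functions on $\Omega$. Setting $\rho=\exp(-1/g)$ gives a positive analytic function whose key feature is that $\rho\cdot g^{-k}$ remains bounded on $\Omega$ for every $k\in\N$. Since the partial derivatives $D^{\alpha}\rho$ are polynomials in $g^{-1}$ multiplied by $\rho$, and near $T_0$ each derivative $D^{\alpha}f$ is dominated by some power $g^{-N_\alpha}$ (after shrinking $g$ in accordance with $f$ if necessary), every derivative $D^{\alpha}(\rho f)$ decays exponentially to $0$ at $T_0$, yielding the required $C^\infty$ extension.

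The main difficulty is the coordination of the choice of $g$ with the growth rates of the derivatives of $f$: each bound on $D^\alpha f$ demands $g$ to be slightly smaller near $T_0$, but $g$ must remain analytic and positive on all of $\Omega$ simultaneously. This uniform estimate is precisely the technical heart of Whitney's Lemma~6. Once the local statement is secured, a partition-of-unity argument ---replacing $\rho$ by a suitable finite combination $\sum_i \rho_i \chi_i$ with $\chi_i$ analytic cutoffs adapted to the chart domains--- globalises to a single analytic positive $\rho$ on $O$ that achieves the desired smooth extension on the whole sphere.
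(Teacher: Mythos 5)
The core idea of your proposal --- producing an analytic minorant via Whitney's Lemma~6 and using a damping function of the form $\exp(-1/g)$ to overpower the blow-up of $f$ and of its derivatives near $T=\EE\setminus O$ --- is sound and faithful to the paper's citation. However, the globalization step contains a genuine gap. You propose to patch together chart-local dampers $\rho_i$ into $\sum_i \rho_i \chi_i$ using ``analytic cutoffs adapted to the chart domains,'' but no such functions exist: a real-analytic function on the connected manifold $\EE$ that vanishes on a nonempty open set is identically zero, so there is no analytic partition of unity. In the formula $\sum_i \rho_i\chi_i$, any nontrivial analytic $\chi_i$ fails to vanish on the open region where $\rho_i$ is undefined, and the sum has no meaning there. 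This is exactly the point where analyticity, unlike $C^\infty$-smoothness, is not a property that can be localized and reassembled; treating the reduction as ``routine'' hides the actual content.

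The correct way around this avoids gluing altogether. If $T=\emptyset$ take $\rho\equiv 1$. If $T\neq\emptyset$, normalize coordinates so that some point of $T$ is the north pole $p_N$. Then $O\subset\EE\setminus\{p_N\}$, and the single stereographic projection $\pi_N$ carries the entire problem to one open set $\Omega\subset\RR$ with no overlaps. Whitney's Lemma~6 then supplies directly an analytic $\rho_0$ on $\Omega$, minorant of a prescribed positive continuous function, with $\rho_0$ and all its derivatives going to zero at $\Bd\Omega$ \emph{and at infinity}; choosing the continuous majorant small enough to beat the Cauchy-estimate growth of $D^\alpha(f\circ\pi_N^{-1})$ near $\Bd\Omega$, and fast-decaying at infinity so that the extension by zero reads as flat at the origin of the south-pole chart, produces the required $\rho$. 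Note also that, as you implicitly observe, the damping function necessarily vanishes on $T$ (if $f$ blows up near $T$, no $\rho$ bounded below on the compact $\EE$ can save it), so the theorem's codomain ``$(0,\infty)$'' must be read as positivity on $O$ only; your $\exp(-1/g)$ is analytic on $O$ and $C^\infty$-flat at $T$, which is the feature that actually matters for the application.
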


The contents of the following theorem  are classical and well
known, see, e.g., \cite[Theorem~4.3]{JL}, except maybe for the
``parity'' statement, which is due to Sullivan \cite{Sullivan};
alternatively, check \cite{EJSullivan} for a recent
``dynamically based'' proof.

\begin{theorem}
 \label{analytic-2}
Assume that $O \subset \EE$ is a region and let $A\subset O$ be
analytic. Then either $A=O$ or every $u \in A$ is a star point in
$A$ of even order. If, moreover, the corresponding star
neigbouring $u$ is small enough, then, after removing  its center
$u$ and its  endpoints, the resultant open arcs admit smooth (in
fact, analytic) parametrizations.
\end{theorem}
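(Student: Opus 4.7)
The theorem collects classical facts from the local theory of real analytic plane curves together with Sullivan's parity statement, and my plan is to assemble these ingredients explicitly. I would first reduce to a local problem via an analytic chart, so that $u$ is the origin of $\RR$ and $A$ is the zero set of a real analytic germ $F$ at $0$. If $F$ vanishes identically on some open neighbourhood of $u$, the identity principle applied on the connected region $O$ yields $F\equiv 0$, hence $A=O$; so the remaining case is the one where $F$ is not flat at $u$.

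In this case, a linear change of coordinates arranges that $F(0,y)\not\equiv 0$, and the Weierstrass preparation theorem gives a factorization $F=g\cdot P$ in a neighbourhood of $u$, with $g(0,0)\neq 0$ and $P(x,y)=y^d+a_{d-1}(x)y^{d-1}+\cdots+a_0(x)$ a Weierstrass polynomial whose coefficients vanish at $0$. The zero sets of $F$ and $P$ agree locally. I would next factor $P$ in the ring of real analytic germs and apply the real analytic form of the Puiseux theorem to each irreducible factor: each such factor either has $u$ as an isolated zero, or defines an analytic arc through $u$ that admits a real analytic parametrization $t\mapsto \gamma(t)$ with $\gamma(0)=u$ and $\gamma'(t)\neq 0$ for $t\neq 0$. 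Shrinking the neighbourhood so that the finitely many arcs meet only at $u$, $A$ is realized locally as an $n$-star centred at $u$, and the $n$ open half-arcs obtained after removing $u$ inherit real analytic parametrizations from their defining Puiseux branches.

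The parity statement $n=2k$, where $k$ is the number of arc-type irreducible factors, is then automatic: each analytic arc through $u$ contributes exactly two open half-arcs upon removal of $u$. The genuine content lies in the dichotomy ``isolated point or topological arc'' for irreducible real analytic germs of dimension one, which is precisely Sullivan's parity theorem \cite{Sullivan}; alternatively, the dynamical argument of \cite{EJSullivan} could be adapted to reach the same conclusion.

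The main obstacle throughout is Sullivan's parity statement itself. Weierstrass preparation and Puiseux's theorem are classical tools, but a priori an irreducible real analytic germ could display more exotic topology at the origin than a single arc or an isolated point; only Sullivan's result rules this out. Once it is granted, the remainder of the theorem is essentially bookkeeping from the Weierstrass factorization and the Puiseux parametrizations.
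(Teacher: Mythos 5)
The paper offers no proof of this theorem; it simply cites \cite[Theorem~4.3]{JL} for the local analytic-curve structure and \cite{Sullivan} (or the dynamical account in \cite{EJSullivan}) for the evenness of the order. Your proposal reconstructs exactly the classical argument those citations stand in for: reduce to a local Weierstrass normal form, factor into irreducible germs, and invoke real Puiseux theory to describe each germ's real trace and its analytic parametrization. Structurally this is the same route, and your reduction steps (identity principle on the connected region $O$ for the dichotomy $A=O$, Weierstrass preparation, shrinking so the finitely many branches meet only at $u$) are all correct.

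One attribution should be corrected. You assert that the dichotomy ``an irreducible real-analytic germ has either an isolated zero or a single topological arc'' \emph{is} Sullivan's parity theorem. It is not: Sullivan's theorem is the statement that the link of a real analytic set has even Euler characteristic, which for plane curve germs directly gives the evenness of the number of half-branches at $u$, with no reference to irreducible factors. The dichotomy for irreducible germs is instead a classical consequence of Puiseux theory and conjugation: after Weierstrass preparation, either $P$ splits over $\C\{x,y\}$ into two coprime conjugate factors, so the real locus near $u$ is finite; or $P$ remains irreducible over $\C$, and the pullback of complex conjugation to the Puiseux disc $t\mapsto(t^d,\phi(t))$ is an anti-holomorphic involution $t\mapsto\zeta^k\bar t$ whose fixed set is a diameter, whose analytic image is the arc (and its two open halves carry the required analytic parametrizations). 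The dichotomy therefore \emph{implies} Sullivan's parity in this special case rather than being identical to it, and your claim that the parity ``is then automatic'' should be read as resting on this Puiseux-plus-conjugation argument, not on Sullivan. This is only a bookkeeping issue in the attributions; the mathematical content of your proof is sound and matches what the paper's citations provide.
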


Later in the paper we will consider unions of analytic sets in
open subsets of the sphere. In general, the union of an arbitrary
family of analytic sets in $O$ may not be analytic. It is easy to
use Theorem~\ref{analytic-2} above to find a counterexample; for
instance, the union of an infinite countable family of circles in
$\R^2$ which pairwise meet in the origin cannot be analytic (and
such a family of circles can be easily chosen with all the circles
being analytic sets). Nevertheless, the following result is proved
in \cite[p. 154]{BRUHAT} (a family of subsets of a topological
space $X$ is said to be \textit{locally finite} if every point of
$X$ possesses a neighbourhood which only meets finitely many
subsets of the family):

\begin{theorem}\label{bruhat} If $\mathcal{F}$
is a locally finite family of analytic sets in $O$, then the union
of the sets from $\mathcal{F}$ is also an analytic set in $O$.
\end{theorem}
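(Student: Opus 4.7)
The plan is to construct a single analytic function $F:O\to\R$ whose zero set equals $\bigcup_{A\in\Fc}A$. Since $O$ is second countable and $\Fc$ is locally finite, the family $\Fc$ is at most countable, so I enumerate it as $\{A_n\}_n$ with each $A_n=F_n^{-1}(0)$ for some analytic $F_n:O\to\R$. Locally the task is trivial: each point $u\in O$ admits a neighbourhood $U$ meeting only finitely many of the $A_n$, say those indexed by a finite set $S(u)$, and on $U$ the finite product $\prod_{n\in S(u)} F_n$ is analytic with zero set $U\cap\bigcup_n A_n$. All the content of the statement lies therefore in the globalisation of these local defining functions, which is where the difficulty sits, because analytic partitions of unity are not available.

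The most direct attempt is multiplicative. Fix an exhaustion $K_1\subset K_2\subset\cdots$ of $O$ by compacts; on each $K_j$, compactness together with local finiteness forces all but finitely many $F_n$ to be uniformly bounded away from zero on $K_j$. I would then try to rescale each $F_n$ by a small everywhere-positive analytic multiplier, introducing corrections of the form $1+\lambda_n(F_n-1)$ with $\lambda_n>0$ decreasing very fast with $n$, and show that the infinite product of these corrections converges in the $C^\omega$-topology on every $K_j$ to an analytic function whose zero set is exactly the union. The main obstacle here is that real $C^\omega$-convergence demands control on a complex thickening of each $K_j$, not merely uniform convergence of the function together with all of its derivatives; one would therefore need to estimate the factors on a carefully chosen nested sequence of complex neighbourhoods, and then check that no zeros are introduced or lost in the limit. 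This is doable but technically heavy.

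A cleaner plan, which I believe is the right one and is in essence the argument of \cite[p.~154]{BRUHAT}, is sheaf-theoretic. Let $\mathcal{O}_O$ denote the sheaf of germs of real analytic functions on $O$, which is coherent by Cartan's theorem for real analytic manifolds. Each $A_n$ has a coherent ideal sheaf $\mathcal{I}_n\subset\mathcal{O}_O$, and, because the family is locally finite, the product $\mathcal{I}:=\prod_n \mathcal{I}_n$ is a well-defined (locally a finite product) coherent ideal sheaf whose zero locus is precisely $\bigcup_n A_n$. The vanishing of $H^1(O,\mathcal{I})$ for coherent analytic sheaves on a paracompact real analytic manifold then produces a global section $F\in\Gamma(O,\mathcal{I})$ whose stalk at each point generates $\mathcal{I}$, and hence an analytic function on $O$ whose zero set equals $\bigcup_n A_n$. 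The main obstacle in this approach is not the argument itself but setting up the framework: verifying coherence for real analytic ideal sheaves and the relevant cohomology vanishing requires the non-trivial machinery provided in Bruhat's text, which is exactly why it is invoked here as a black box rather than reproved.
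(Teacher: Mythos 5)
The paper does not prove this theorem; it is quoted from \cite[p.~154]{BRUHAT} and used as a black box, so there is no internal argument to compare your proposal against. Your sheaf-theoretic sketch is of the right type (the Whitney--Bruhat proof does rest on coherence and Cartan theory for real analytic manifolds, available via Grauert's complexification), but the key step fails as you have written it. Vanishing of $H^1(O,\mathcal{I})$ does not yield a single global section of $\mathcal{I}$ generating every stalk, and no such section can exist in general: the product ideal sheaf $\mathcal{I}=\prod_n\mathcal{I}_n$ is typically not locally principal (already the ideal sheaf of a single point of $\mathbb{R}^2$ requires two generators near that point). Cartan's Theorem A provides only a possibly countably infinite family of global sections $f_1,f_2,\ldots$ that jointly generate $\mathcal{I}$ and have common zero set $\bigcup_n A_n$; to compress these into one defining function one must form something like $F=\sum_n\epsilon_n f_n^2$ with rapidly decaying $\epsilon_n>0$, and establishing real analyticity of this infinite sum requires precisely the complex-thickening estimates you flagged and then set aside in your multiplicative sketch.

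That first sketch has a further, independent error: the factor $1+\lambda_n(F_n-1)=\lambda_n F_n+(1-\lambda_n)$ vanishes on $\{F_n=1-1/\lambda_n\}$, not on $A_n=\{F_n=0\}$, once $\lambda_n\neq 1$, so the infinite product cannot have $\bigcup_n A_n$ as its zero set even granting convergence. In short, both of your routes point in reasonable directions but both stop short of a proof at the same hard analytic convergence issue, and the second also rests on an incorrect use of the cohomological machinery; the paper's decision to cite \cite{BRUHAT} rather than re-prove the result is well founded.
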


%The next desingularization theorem states that, in a sense, all
%``important'' singular points of an analytic vector field are
%isolated. It follows, for instance, from \cite[Theorem~4.5]{JL}.
%
%\begin{theorem}
% \label{analytic3}
%Let $f:O\subset\EE \to \R^3$ be an analytic
%vector field and let $p$ be a singular point of $f$. Then there
%are an open neighbourhood $U\subset O$ of $p$, an analytic scalar map
%$\rho:U\to \R$ and an analytic vector field $g:U \to \R^3$,
%such that  $f=\rho g$ and $g$ has no singular points in
%$U\setminus \{p\}$.
%\end{theorem}

Our last theorem, establishing the local structure of
$\omega$-limit sets for analytic vector fields, was proved in
\cite[Lemma~4.6 (see also the comment below Remark~4.7)]{JL}.

\begin{theorem}
 \label{analytic-4}
Let $f:O\to \R^3$ be an analytic vector field and let $\Phi$ be
the flow associated to $f$. Let $p\in O$ and assume that
$\Omega=\omega_\Phi(p)$ is not a singleton. Let $u\in \Omega\cap
O$. Then there are a disk $D\subset O$ neighbouring $u$ and an
$m$-star $X$, $m\geq 2$, having $u$ as its center, such that
$X=\Omega\cap D$ and $X$ intersects $\Bd D$ exactly at its
endpoints. Moreover, if $Q$ is any of the components of
$D\setminus X$, then either the orbit $\Gamma=\Phi_p(\R)$ does not
 intersect $Q$, or $\Cl Q$ is a semi-flow box
(intersecting $X$ at its border $B$) and $\Gamma$ accumulates at
$B$ from $Q$.
\end{theorem}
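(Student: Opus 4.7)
The plan is to work in a small disk $D$ around $u$ and split into two cases according to whether $u$ is a regular or singular point of $f$. As a preliminary step I would shrink $D$ so that $\Cl D\subset O$ contains no singular point of $f$ other than possibly $u$ itself: the singular set of $f$ inside $O$ is analytic, so by Theorem~\ref{analytic-2} it is either a whole component of $O$ (impossible here, since the existence of a non-trivial orbit accumulating to $\Omega\ni u$ rules this out near $u$) or it is locally a discrete union of analytic star-shaped arcs meeting at isolated points.

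Suppose first that $u$ is regular. Take an analytic transversal arc $\Sigma\subset D$ through $u$. Because $u\in\Omega$, the orbit $\Gamma=\Phi_p(\R)$ crosses $\Sigma$ along a sequence converging to $u$, and the Poincar\'e first-return map $P$ defined in a neighbourhood of $u$ on $\Sigma$ is analytic. A monotonicity argument for planar flows together with analyticity of $P$ shows that, after further shrinking $\Sigma$, either the fixed points of $P$ on $\Sigma$ are isolated or $P$ is the identity (in the latter case $\Sigma$ is filled with periodic orbits and $\Omega$ is the periodic orbit through $u$, in which case the conclusion is immediate). In the generic situation, $\Omega\cap\Sigma$ turns out to be a finite set containing $u$, and saturating these points by the flow inside the flow box yields $\Omega\cap D$ as an $m$-star $X$ centred at $u$ with $m\geq 2$, the two minimal branches being the forward and backward parts of the orbit through $u$.

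The main technical obstacle is the singular case. Here I would invoke the classical analytic sectorial decomposition at an isolated singularity of an analytic planar vector field, obtained via iterated blow-ups: a small punctured neighbourhood of $u$ decomposes into finitely many hyperbolic, parabolic and elliptic sectors bounded by finitely many characteristic orbits tending to $u$ analytically. Since $\Omega$ is closed, invariant and, as an $\omega$-limit set properly contained in $\EE$, has empty interior (by the Poincar\'e--Bendixson type arguments recalled in \cite{JL}), $\Omega\cap D$ cannot include a whole open sector. It therefore consists of $\{u\}$ together with those characteristic orbits touching $u$ that lie in $\Omega$, a finite collection which by connectedness and non-triviality of $\Omega$ is non-empty and contributes at least two branches. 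This gives the desired $m$-star $X$ with $m\geq 2$, and Theorem~\ref{analytic-2} supplies the analytic parametrisations of its edges away from $u$.

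Once $X$ is constructed, each component $Q$ of $D\setminus X$ is a topological disk. If $\Gamma$ ever visits $Q$, then since $X=\Omega\cap D$ contains all accumulation points of $\Gamma$ inside $D$, every incursion of $\Gamma$ into $Q$ must be a crossing semi-orbit entering through one branch of $X$ and exiting through the adjacent one. These crossing semi-orbits are pairwise disjoint and, being pushed towards $X$ by the recurrence at $u\in\Omega$, foliate a one-sided neighbourhood of the border $B=\Cl Q\cap X$. A standard parametrisation of this foliation furnishes the homeomorphism $h:[-1,1]\times[0,1]\to\Cl Q$ witnessing the semi-flow box structure, and the accumulation of $\Gamma$ at $B$ from $Q$ follows from $B\subset\Omega$ together with the density of the crossing arcs near $B$.
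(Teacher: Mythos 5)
The paper does not prove Theorem~\ref{analytic-4}; it quotes it from \cite[Lemma~4.6]{JL}, so there is no internal proof here to compare against. Your proposal, however, has substantive gaps.

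Your preliminary step is inconsistent with the very structure you cite from Theorem~\ref{analytic-2}: the singular set $\Sing(\Phi)\cap O$ is analytic, hence at $u$ it is locally a $2n$-star of singular points, and if $n\geq 1$ there are whole arcs of singular points through $u$. You therefore cannot in general shrink $D$ so that $u$ is the only singular point in $\Cl D$, and the sectorial decomposition via iterated blow-ups you invoke in the singular case is a theorem about \emph{isolated} singularities, so it does not apply as stated; indeed, the $m$-star $X$ the theorem asserts may well contain arcs of the singular set as branches, a configuration your sketch omits entirely. Even granting an isolated singularity, passing from ``$\Omega$ has empty interior, so $\Omega\cap D$ contains no whole sector'' to ``$\Omega\cap D$ is $\{u\}$ together with finitely many characteristic orbits touching $u$'' is exactly the crux, and you do not justify it: one must exclude, using analyticity in an essential way, that $\Omega$ contains orbits that accumulate on $u$ without tending to $u$. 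In the regular case, the ``Poincar\'e first-return map $P$ defined in a neighbourhood of $u$ on $\Sigma$'' is not a well-defined object (there is no reason for orbits through $\Sigma$ near a non-periodic regular point to return to $\Sigma$). Fortunately a simpler argument, purely topological and needing no analyticity, finishes that case: the crossings of $\Gamma$ with the transversal $\Sigma$ are monotone, hence converge to a single point of $\Sigma$, which must be $u$; thus $\Omega\cap\Sigma=\{u\}$ and $\Omega\cap D$ is the orbit segment through $u$, a $2$-star. Finally, a small slip at the end: when $\Gamma$ visits a component $Q$ of $D\setminus X$ it enters and exits through $\Bd D$, not through branches of $X$, since $\Gamma\cap\Omega=\emptyset$; the semi-flow box structure of $\Cl Q$ also requires showing that \emph{every} point of $Q$, not just points of $\Gamma$, lies on a crossing semi-orbit, which your sketch does not address.
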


\section{Proof of Theorem~\ref{teoA}}
 \label{proofA}

Let $\Phi$ be the  flow associated to $f$. Let $p\in \EE$ and
rewrite $\Gamma=\Phi_p(\R)$, $\Omega=\omega_\Phi(p)$. Clearly we
can discard the cases when $\Omega$ is a singleton or a circle. In
particular we suppose $p\in O$. The reader is assumed to be
familiar with the basic facts of the Poincar\'e-Bendixson theory
of sphere flows; regarding this, a good reference is
\cite[Chapter~2]{Aranson}. For instance, $\Phi$ admits no
non-trivial recurrent orbits, that is, $\Gamma\cap
\Omega=\emptyset$.

\begin{lemma}
 \label{teoA-1}
 $\Omega$ is a net (hence a Peano space).
\end{lemma}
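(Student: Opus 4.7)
The plan is to establish that $\Omega$ is a Peano space whose set $E_\Omega$ of $2$-star points is dense, which is exactly the definition of a net. The starting point is Theorem~\ref{analytic-4}: for every $u\in \Omega\cap O$ there is a disk $D\subset O$ neighbouring $u$ such that $\Omega\cap D$ is an $m$-star $X$ with center $u$ ($m\ge 2$), and $X\cap\Bd D$ equal to the set of tips of $X$. This simultaneously yields local arcwise connectedness of $\Omega$ at every point of $\Omega\cap O$, and also that every point of $X$ lying in the relative interior of an edge of the star has an open-arc neighbourhood inside $\Omega\cap\Inte D$, hence is a $2$-star point of $\Omega$. Thus $E_\Omega\cap D$ is dense in $\Omega\cap D$.

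Next I would show that $\Omega\cap O$ is dense in $\Omega$. The set $\Omega\cap T$ is closed in $\Omega$ and, being contained in the totally disconnected set $T$, is itself totally disconnected; since $\Omega$ is neither a singleton nor a circle (both trivially excluded at the start of the proof), it is a non-degenerate continuum, and the boundary-bumping theorem forbids a singleton from being a component of a proper open subset of $\Omega$ which contains it in its interior. Hence $\Omega\cap T$ has empty interior in $\Omega$, so $\Omega\cap O$ is dense in $\Omega$, and combined with the previous paragraph, $E_\Omega$ is dense in $\Omega$.

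The hard part, in my view, is local connectedness of $\Omega$ at points $u\in \Omega\cap T$. The idea is to exploit the total disconnectedness of $T$: for each $\epsilon>0$, $u$ admits a clopen neighbourhood $C$ in $T$ of diameter less than $\epsilon$, and $C$ can be enclosed in an open set $V\subset\EE$ of diameter comparable to $\epsilon$ whose boundary is a disjoint union of Jordan curves contained in $O$. Along $\Bd V\subset O$ the vector field is analytic, so Theorem~\ref{analytic-4} applied to a compactness covering of $\Omega\cap\Bd V$, together with the invariance of $\Omega$ and the absence of non-trivial recurrence ($\Gamma\cap\Omega=\emptyset$), forces the branches of $\Omega$ entering $V$ across $\Bd V$ to form a finite family which can only accumulate inside $V$ at the totally disconnected set $C\cap\Omega$; a Poincar\'e--Bendixson-type analysis then identifies the component of $\Omega\cap \Cl V$ containing $u$ as a neighbourhood of $u$ in $\Omega$ of diameter comparable to $\epsilon$. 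Alternatively, one may simply invoke the classical fact that $\omega$-limit sets of continuous flows on $\EE$ are Peano spaces, which applies because our $C^\infty$ extension of $f$ produces a continuous sphere flow. Once $\Omega$ is known to be a Peano space, the density of $E_\Omega$ already proved upgrades it to a net.
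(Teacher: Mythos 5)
Your treatment of density of $E_\Omega$ and of $\Omega\cap O$ is fine, but the argument for local connectedness has a genuine gap. The second alternative you propose is simply false: there is no classical fact that $\omega$-limit sets of continuous sphere flows are Peano spaces. Vinograd's theorem, quoted in the introduction of the paper, identifies these $\omega$-limit sets exactly with the boundaries of simply connected sphere regions, and such a boundary need not be locally connected --- the Warsaw circle, for instance, bounds a simply connected region in $\EE$ and is therefore an $\omega$-limit set of some continuous flow, yet it is not a Peano space. Local connectedness is precisely one of the dividends of analyticity here, so any valid argument must ultimately lean on the star structure of Theorem~\ref{analytic-4}; a purely continuous-flow argument cannot work.

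Your first alternative, the geometric construction around a clopen neighbourhood $C\subset T$, points in a reasonable direction but is far from a proof: one must justify that $\Omega\cap\Bd V$ is finite (or at least has only finitely many ``crossings''), control how the entering branches of $\Omega$ branch further and accumulate inside $V$, and then show that the component of $\Omega\cap\Cl V$ containing $u$ is actually open in $\Omega$; the phrase ``Poincar\'e--Bendixson-type analysis'' does not close these gaps. The paper sidesteps all of this with a far cleaner reduction via Kuratowski's criterion: a continuum is locally connected if and only if every subcontinuum with empty interior is a singleton. If $K\subset\Omega$ were a non-degenerate subcontinuum with empty interior in $\Omega$, then $K$ could not lie entirely in the totally disconnected set $T$, so it would contain some $u\in O$; Theorem~\ref{analytic-4} gives a disk $D$ with $\Omega\cap D$ a star $X$ centred at $u$, and then either $K\cap D=\{u\}$, contradicting connectedness of $K$, or $K\cap X$ contains an arc, contradicting the emptiness of the interior of $K$. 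This single argument disposes of both $\Omega\cap O$ and $\Omega\cap T$ at once, with no direct local analysis near $T$ required.
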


\begin{proof}
Due to the compactness of $\EE$, $\Omega$ is a continuum
\cite[Theorem~3.6, p. 24]{BS}. We just need to show that $\Omega$
is  locally connected, because then Theorem~\ref{analytic-4}
easily implies that it is a net. To prove the local connectedness,
according to \cite[Theorem~2, p. 247]{KU}, we must show that if a
continuum $K\subset\Omega$ has empty interior in $\Omega$, then it
is a singleton. Suppose the opposite to find such a continuum $K$
having at least two points. Since $T$ is totally disconnected, $K$
cannot be included in $\Omega \cap T$ and we can find a point $u
\in K \cap O$. Let $X\subset \Omega\cap O$ be a star neighbouring
$u$ and having it as its center (Theorem~\ref{analytic-4}). Two
possibilities arise: either $K$ intersects $X$ exactly at $u$, or
$K\cap X$ contains an arc. Both of them are impossible: the first
one because of the connectedness of $K$, the second one because
$K$ has empty interior in $\Omega$.
\end{proof}

Since we are assuming that $\Omega$ is neither a circle nor a
singleton, it is the union of its non-empty families of edges
(which are countably many) and vertexes. Let $E$ be an edge of
$\Omega$ and $u\in E$. We say that $u$ is \emph{two-sided} if
there is a disk $D$ neighbouring $u$ such that $D$ is decomposed
by $E$ into two components $D_1$ and $D_2$, and $\Gamma$
accumulates at $u$ from both $D_1$ and $D_2$. Otherwise we say
that $u$ is \emph{one-sided}. Recall that if   $u\in E$ is
regular, then there is a flow box $M$ such that $h(0,0)=u$ for the
corresponding homeomorphism $h:[-1,1]\times [-1,1]\rightarrow M$.
Since the arc $h(\{0\}\times [-1,1])$ is transversal to the flow,
it must be intersected monotonically, as time increases, by
$\Gamma$. Hence $u$ is one-sided.

We say that an edge $E$ is \emph{two-sided} if it has some
two-sided point; otherwise, it is called \emph{one-sided}.

\begin{lemma}
 \label{teoA-2}
 Let $E$ be an edge of $\Omega$. Then $E$ is one-sided if and
 only if it is contained in a circle in $\Omega$. Moreover, if $E$
 is two-sided, then all points of $E$ are two-sided.
\end{lemma}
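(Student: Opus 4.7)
The plan is to introduce a tubular neighbourhood of $E$ and study how the orbit $\Gamma$ can accumulate on each of its two sides, combining the local structure at points of $O$ given by Theorem~\ref{analytic-4} with a global analysis of the components of $\EE\setminus\Omega$.

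First, since $E$ is an open arc in $\EE$ and $\Omega$ is locally just an arc at each point of $E$, I would pick an open neighbourhood $N$ of $E$ with $N\cap\Omega=E$ and $N\setminus E=N^+\sqcup N^-$ into two connected open sides. Setting $S^{\pm}:=E\cap\Cl(\Gamma\cap N^{\pm})$, each of these is closed in $E$; since every $u\in E\subset\Omega$ is accumulated by $\Gamma$ while $\Gamma\cap\Omega=\emptyset$, one has $E=S^+\cup S^-$, and $u$ is two-sided iff $u\in S^+\cap S^-$. At any $u\in E\cap O$, Theorem~\ref{analytic-4} produces a disk $D_u$ and an arc $X_u=\Omega\cap D_u\subset E$ through $u$ (a $2$-star because $u$ is an edge point), such that each of the two components of $D_u\setminus X_u$, which for small $D_u$ lie respectively in $N^+$ and $N^-$, is either disjoint from $\Gamma$ or is a semi-flow box with border $X_u$ on which $\Gamma$ accumulates. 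Hence $u\in S^{\pm}$ forces $X_u\subset S^{\pm}$, and so $S^+\cap(E\cap O)$ and $S^-\cap(E\cap O)$ are clopen in $E\cap O$; in particular each connected component of $E\cap O$ is uniformly two-sided, or uniformly one-sided from a single distinguished side.

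For the equivalence ``one-sided iff contained in a circle'', I would let $R$ be the component of $\EE\setminus\Omega$ containing $\Gamma$, whence $\Bd R=\Omega$ (standard from $\Omega=\omega_\Phi(p)$), and let $R^{\pm}$ be the components of $\EE\setminus\Omega$ containing $N^{\pm}$. If $R^+\neq R^-$, one of them, say $R^{-}$, does not meet $\Gamma$, so $S^-=\emptyset$ and $E$ is one-sided; moreover $\EE\setminus R^-=R^+\cup\Omega\cup\bigcup_j R_j$ is connected because $\Omega$ is connected and every other $\Cl R_j$ meets $\Omega$ along $\Bd R_j$, so $R^-$ is simply connected, hence an open disk, and $\Bd R^-$ is a circle in $\Omega$ containing $E$. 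Conversely, if $E$ lies in a circle $C\subset\Omega$ then $\EE\setminus C$ splits into two disks and the connected $\Gamma\subset\EE\setminus\Omega\subset\EE\setminus C$ is contained in only one of them, so $E$ is accumulated from one side only. This settles the first part.

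For the moreover statement, assume $u_0\in E$ is two-sided. By the first part this forces $R^+=R^-=R$; combined with the flow-box observation recalled just before the lemma (every regular point of $E$ is one-sided), the point $u_0$ must be singular. I would then argue that no component $J$ of $E\cap O$ can be one-sided under these conditions: were $J$ one-sided, say from $N^+$, then the $N^-$-sectors at points of $J$, all disjoint from $\Gamma$ by Theorem~\ref{analytic-4}, would glue along $J$ to produce an open set $W\subset R\cap N^-$ adjacent to $J$ with $\Gamma\cap W=\emptyset$; closing off the frontier of $W\cup J$ through $\Omega$ at the two endpoints of $\Cl J$ (which are vertices of $\Omega$ or points of $E\cap T$) yields a circle in $\Omega$ containing $J$, and by the equivalence already established every point of such a circle would be one-sided, contradicting the existence of the two-sided $u_0$ reachable along the connected $E$. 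Hence every component of $E\cap O$ is two-sided, and the closedness of $S^+\cap S^-$ together with the density of $E\cap O$ in $E$ then forces every point of $E$ to be two-sided. The principal obstacle is making the gluing and circle-construction in this last paragraph precise; an attractive shortcut, if it can be invoked from the analytic planar theory, is a density result for $\Gamma$ in its region $R$, which would immediately give $E\subset S^+\cap S^-$ as soon as $R^+=R^-=R$.
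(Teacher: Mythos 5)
Your decomposition into $S^\pm$ and the local clopenness argument via Theorem~\ref{analytic-4} are sound, but the core of the lemma---that a one-sided edge must lie in a circle of $\Omega$---is not actually established, and the paper's own argument takes a quite different and cleaner route precisely to avoid the pitfalls you run into.

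There are three genuine gaps. First, for the ``iff'' you only show two implications: (a) $R^+\neq R^-$ implies $E$ is one-sided and $E\subset\Bd R^-$, and (b) $E$ contained in a circle implies $E$ is one-sided. What is missing is exactly the hard direction, namely that a one-sided $E$ with $R^+=R^-=R$ is impossible (equivalently, is forced into a circle). Nothing in your text rules out $\Gamma$ lying in $R$, meeting $N^+$, but never entering $N^-$, and this is the case the lemma is really about. Second, even granting $R^+\neq R^-$, the step ``$R^-$ is simply connected, hence an open disk, and $\Bd R^-$ is a circle'' is unjustified: the boundary of a simply connected subregion of $\EE$ need not be a circle, nor even locally connected. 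In fact the paper establishes that the complementary components other than $R$ have circle boundaries \emph{as a consequence} of Lemma~\ref{teoA-2} (see the paragraph following its proof), so invoking that fact here would be circular. Third, in the ``moreover'' part you yourself flag the gluing/circle-closing step as imprecise; the difficulty is real, since the endpoints of $\Cl J$ may lie in $T$, where no local star structure is available. The proposed shortcut (density of $\Gamma$ in $R$) is also unavailable: since $\Phi$ has no non-trivial recurrent orbits, a single orbit cannot be dense in a two-dimensional region.

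For comparison, the paper proves the contrapositive directly: assuming $E$ is not contained in a circle, one shows $\Omega\setminus E$ splits into two disjoint continua $\Omega_1,\Omega_2$, picks (via \cite[Theorem~5', p.~513]{KU}) a circle $C\subset\EE$ separating them and meeting $\Omega$ at a single point $u\in E\cap O$, and then, supposing $u$ one-sided, uses the semi-flow box from Theorem~\ref{analytic-4} together with a counting-of-crossings argument for $\Gamma$ against $C$ to contradict $C\cap\Omega=\{u\}$. This yields at once that every $u\in E\cap O$ is two-sided when $E$ is not in a circle, and then density gives the ``moreover.'' That route never needs to know what the boundaries of complementary regions look like.
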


\begin{proof}
The ``if'' part of the first statement is obvious. Next we prove
that if $E$ is not contained in a circle, then it is two-sided.

By Lemma~\ref{teoA-1},  there are disjoint continua
$\Omega_1,\Omega_2$ satisfying $\Omega\setminus E=\Omega_1\cup
\Omega_2$. Use \cite[Theorem~5', p. 513]{KU} to find a circle
$C\subset \EE$ separating $\Omega_1$ and $\Omega_2$. Clearly, we
can assume that $C$ intersects $E$ (hence $\Omega$)  exactly at
one point $u\in O$ which, arguing to a contradiction, we will
suppose one-sided. By Theorem~\ref{analytic-4}, there is a
semi-flow box $M$, with corresponding homeomorphism
$h:[-1,1]\times [0,1]\to M$ and border $B\subset E$, such that
$h(0,0)=u$ and $\Gamma$ accumulates at $B$ from $M$. We can
assume, without loss of generality, that $M$ intersects $C$ at the
arc $L=h(\{0\}\times [0,1])$. After crossing $L$ the semi-orbits
of $\Gamma$ in $M$ enter, as time increases, into one of the open
disks enclosed by $C$, call it $U$. But then $\Gamma$ must also
cross $C$ infinitely many times to escape from $U$, and these
other crossing points cannot belong to $M$ (and hence cannot be
close to $u$ because $u$ is one-sided). Consequently, $\Omega$,
the $\omega$-limit set of $\Gamma$, intersects $C\setminus \{u\}$,
and we get the desired contradiction.

The above argument implies in fact that if $E$ is two-sided, then
all points from $E\cap O$ are two-sided. Since this set is dense
in $E$, all points from $E$ are two-sided.
\end{proof}

Let $\{C_j\}_j$ be the family of circles in $\Omega$. Each $C_j$
decomposes $\EE$ into open disks $R_j$ and $S_j$, which can be
chosen so that the resultant disks $D_j=C_j\cup R_j$ do not
intersect $\Gamma$ (hence $R_j$ is a component of $\EE\setminus
\Omega$ for any $j$). Let $R$ be the component of $\EE\setminus
\Omega$ containing $\Gamma$. Then the family of components of
$\EE\setminus \Omega$ is precisely $\{R\}\cup \{R_j\}_j$. Indeed,
assume that $U$ is a component of $\EE\setminus \Omega$ different
from $R$ and any $R_j$. Lemma~\ref{teoA-2} implies that $\Bd U$
can intersect no edge of $\Omega$; therefore, $\Bd U$ is totally
disconnected and $W=\EE\setminus \Bd U$ is a region. Since $\Bd
W=\Bd U$, $U\subset W$  and both $U$ and $W$ are regions, we get
$U=W$, which is impossible.

Let $A=\Omega\cup \bigcup_j R_j=\EE\setminus R$. We have:

\begin{lemma}
 \label{teoA-3}
 $A$ is a shrub and $\Omega=\Bd A$, the leaves of $A$ being the
 disks $D_j$.
\end{lemma}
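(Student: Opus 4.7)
The plan is to verify directly each clause of the statement: that $A$ is a non-empty proper closed subset of $\EE$, that it is a Peano space, that it is simply connected (so a shrub), that $\Bd A=\Omega$, and that its leaves are the $D_j$. Most of these are formal; the delicate point is local connectedness of $A$ at boundary points that lie in the closures of arbitrarily many leaves.

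First I would identify $\Bd A$ with $\Omega$. By construction $A=\EE\setminus R$ is closed, with $\Bd A=\Bd R$. The inclusion $\Bd R\subset\Omega$ holds because $R$ is a connected component of $\EE\setminus\Omega$. For the reverse inclusion, every $u\in\Omega$ is an accumulation point of $\Gamma\subset R$ while $u\in A$, so $u\in\Bd R$. Compactness of $A$ is immediate. For connectedness, $\Omega$ is a continuum by Lemma~\ref{teoA-1} and each $R_j$ is connected with $\Cl R_j\setminus R_j\subset C_j\subset\Omega$; hence $A=\Omega\cup\bigcup_j\Cl R_j$ is a union of connected sets, each meeting $\Omega$, and therefore connected.

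The main obstacle is proving local connectedness of $A$. At any $u\in R_j$ this is trivial. At $u\in\Omega$, I would combine local connectedness of $\Omega$ (already established in Lemma~\ref{teoA-1}) with the fact that, since $\Omega$ is a Peano continuum, only finitely many of the disks $D_j$ can have diameter exceeding any fixed $\eta>0$ (this is the standard diameter-estimate for complementary components of a locally connected continuum, cf.~\cite[Theorem~10, p.~515]{KU}). Given $\eps>0$, pick a connected neighbourhood $U$ of $u$ in $\Omega$ of diameter less than $\eps/3$, and attach to $U$ those leaves $D_j$ of diameter less than $\eps/3$ that meet $U$; the resulting set is connected and has diameter less than $\eps$. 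Among the finitely many leaves of diameter $\geq\eps/3$, those disjoint from a small ball around $u$ play no role, while those containing $u$ on their circle $C_j$ can be handled by also attaching a small connected disk-neighbourhood of $u$ in $D_j$, which exists because each $D_j$ is itself locally connected. This exhibits a basis of connected neighbourhoods of $u$ in $A$ and completes the proof that $A$ is a Peano space.

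Finally, since $A\subset\EE$ is a Peano space and $\EE\setminus A=R$ is connected, simple connectedness of $A$ follows from \cite[Proposition~4.1]{KRRZ} cited in the introduction; combined with $A\neq\emptyset$ and $A\neq\EE$, this makes $A$ a shrub. For the leaves, the paragraph following the definition of shrub in the introduction shows that the leaves of $A$ are exactly the closures of the connected components of $\Inte A$. But $\Inte A=A\setminus\Bd A=A\setminus\Omega=\bigsqcup_j R_j$, whose components are the $R_j$, and so the leaves of $A$ are exactly the disks $D_j=\Cl R_j$, as claimed.
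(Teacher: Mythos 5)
Your proposal is correct but reaches the conclusion by a genuinely different route than the paper. The single non-trivial point in either proof is showing that the diameters of the $D_j$ tend to zero. You obtain this as a free consequence of Lemma~\ref{teoA-1}: since $\Omega$ is already known to be a Peano continuum, the classical diameter estimate for complementary components (the result you cite as \cite[Theorem~10, p.~515]{KU}, which the paper itself later invokes for exactly the same purpose, applied to $\Bd A$ of a generic shrub) gives $\diam D_j\to 0$ at once, with no further use of the dynamics. The paper instead re-proves this fact by a direct argument tailored to the situation: assuming infinitely many $D_{j_n}$ have diameter $\ge\delta$, it picks accumulation points $u_n\to u$, rules out $u\in R_j$ and $u\in\Omega\cap O$ via Theorem~\ref{analytic-4}, concludes $u\in T$, and then uses the total disconnectedness of $T$ to trap a small disk whose boundary in $O$ must be hit by infinitely many $D_{j_n}$, yielding a contradiction --- so it uses the analytic local structure and the hypothesis on $T$ a second time. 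After the diameter decay is secured, you two also diverge in how local connectedness is deduced: the paper assembles a continuous surjection $[0,1]\to A$ from the Hahn--Mazurkiewicz parametrizations of $\Omega$ and of each $D_j$, while you directly construct a basis of small connected neighbourhoods at each $u\in\Omega$ (the handling of the finitely many ``large'' leaves not containing $u$, versus those whose circle $C_j$ contains $u$, is exactly the right case split, and the $\eps/3$ bookkeeping works out). Both strategies are sound; yours is shorter and isolates the use of analyticity entirely inside Lemma~\ref{teoA-1}, while the paper's is self-contained in the sense of not leaning again on the Kuratowski diameter theorem at this point. The remaining steps of your argument ($\Bd A=\Omega$, connectedness of $A$ from $C_j\subset\Omega$, simple connectedness via \cite[Proposition~4.1]{KRRZ}, identification of the leaves) all match what the paper does, explicitly or implicitly.
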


\begin{proof}
Since $\Inte\Omega=\emptyset$, we have $\Omega=\Bd A$. Since $R$
is connected, it suffices to show that $A$ is locally connected.

If the family $\{D_j\}_{j=1}^k$ is finite this is simple: just use
the Hahn-Mazurkiewicz theorem to find continuous onto maps
$\varphi:[0,1]\to \Omega$ (here we use Lemma~\ref{teoA-1}),
$\varphi_j:[0,1]\to D_j$, and combine these $k+1$ maps to generate
a continuous map applying $[0,1]$ onto $A$.

If $\{D_j\}_{j=1}^\infty$ is infinite, then the above argument
still works provided that the diameters of the disks $D_j$ tend to
zero. Assume that the opposite is true to find $\delta>0$ and
disks $D_{j_n}$ so that $\diam D_{j_n}=d(u_n,v_n)\geq \delta$ for
appropriate $u_n,v_n\in D_{j_n}$, $n=1,2,\ldots$. We can assume
that the sequence $(u_n)$ converges, say to $u\in A$. If $u$
belongs to one of the open disks $R_j$ or to $\Omega\cap O$, then
we immediately get a contradiction (recall
Theorem~\ref{analytic-4}), so $u$ must belong to $T$. Since $T$ is
totally disconnected, we can find a disk $D$ neighbouring $u$ as
small as needed (in particular, $\diam D<\delta$) so that $\Bd
D\subset O$. If $n$ is large enough, then $u_n\in D$ and
$v_n\notin D$, hence $D_{j_n}$ intersects $\Bd D$. Thus the disks
$D_{j_n}$ accumulate at a point from $O$ and again we get a
contradiction.
\end{proof}

We are ready to finish the proof of Theorem~\ref{teoA}. After
Lemma~\ref{teoA-3}, we are left to show that all odd buds of $A$
are in $T$ and all odd cactuses of $A$ intersect $T$.

Let $P=\Sing(\Phi)\cap O$,  assume that $u\in O$ is an odd bud of
$A$ and let $X$ be an $m$-star as in Theorem~\ref{analytic-4}.
Then $m$ is odd and, since there are no disks $D_j$ near $u$, all
edges ending at $u$ must be two-sided (Lemma~\ref{teoA-2}). In
particular, all points of $X$ must be singular for $\Phi$. Now,
since $P$ is the set of zeros of an analytic function $F:O\to
\mathbb{R}$, it is locally at $u$ a $2n$-star $Y$ for some
non-negative integer $n$ (Theorem~\ref{analytic-2}). Since $X$ is
``odd'' and $Y$ is ``even'', $Y$ strictly includes $X$. This means
that (because all points from $X\setminus \{u\}$ are two-sided)
there is a semi-flow box having two consecutive branches as its
border, and intersecting a branch of $Y$ not included in $X$. This
is impossible, because all singular points of a semi-flow box
belong to its border.

Finally, assume that $K\subset O$ is an odd cactus, when the
$m$-prickly cactus $L$ neighbouring $K$ in $A$ can be assumed to
be included in $O$ as well (hence its sprigs need not be whole
sprigs of $A$). Let $N$ be the set of tips of $L$: again, we
remark that these points may be sprig points when seen in $A$. All
edges ending at $K$ are two-sided, hence all sprigs of $L$ consist
of singular points. Note that there are no singular points outside
$L$ accumulating at $L\setminus N$; otherwise there would be an
arc of singular points in $O$ intersecting $L\setminus N$ at
exactly one point, and we could reason to a contradiction with
similar arguments to those in the paragraph above. The conclusion
is: $G=P\cap L$ is the union of finitely many pairwise disjoint
graphs, which are locally ``even'' at all their vertexes, except
for the $m$ endpoints of $L$. This contradicts the following
general parity property for graphs: if $V=\{v_1, v_2, \ldots,
v_l\}$ is the set of vertexes of a graph $G$ and, for every $i \in
\{1,2, \ldots, l\}$, $r_i$ denotes the order of $v_i$ as a star
point in $G$, then $\sum_{i=1}^{l}{r_i}$ is even (this follows
immediately from the fact that $\sum_{i=1}^{l}{r_i}=2k$, with $k$
being the number of edges of the graph).

\section{Realizing the set of zeros of an analytic function as an
$\omega$-limit set}
 \label{proofint}

This section is entirely devoted to show
Proposition~\ref{realizing} below. If refines some ideas from
\cite{JL} and will be pivotal in the proof of Theorem~\ref{teoB}.
Essentially, it states that the boundary $\Omega$ of a simply
connected region in $\EE$ is the  $\omega$-limit set of a vector
field as smooth as $\Omega$.

\begin{proposition}
 \label{realizing}
Let $O$ be a simply connected region of $\EE$, write $\Omega=\Bd
O$, and let $F:\EE\to \R$ be a $C^\infty$ map, which is analytic
(at least) in $O$, and satisfies $F(u)\neq 0$ for any $u\in O$ and
$F(u)=0$ for any $u\in \Omega$. Then there is a $C^\infty$-vector
field $f$ in $\EE$, which is analytic wherever $F$ is (in
particular, in $O$), and such that its associated flow has
$\Omega$ as one of its $\omega$-limit sets.
\end{proposition}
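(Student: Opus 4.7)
The strategy is to adapt Vinograd's classical construction so that the constructed vector field inherits analyticity from $F$ near $\Omega$. Since $O$ is a simply connected proper region of $\mathbb{S}^2$, it is an open disk by the characterization recalled in the introduction; moreover, since $F$ is continuous and never vanishes on the connected set $O$, it has constant sign there, so I may assume $F>0$ on $O$. The analyticity of $F$ on $O$ together with Theorem~\ref{analytic-2} implies that, for $c>0$ small, each level set $\{F=c\}$ consists of finitely many analytic curves, and these level sets form a nested family accumulating on $\Omega$ as $c\to 0^+$. The plan is to build $f$ so that its orbits in $O$ cross the level sets of $F$ monotonically toward $\Omega$ while winding around them, producing a single orbit whose closure in $\mathbb{S}^2$ is $O\cup \Omega$.

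Concretely, on a one-sided collar neighborhood of $\Omega$ inside $O$, away from the zeros of $\nabla F$, I would define $f$ by the explicit analytic formula
\[
f \;=\; \alpha(F)\,(\nabla F)^{\perp} \;-\; \beta(F)\,\nabla F,
\]
with $\alpha,\beta$ analytic, vanishing at $0$, and positive on $(0,\infty)$. Along orbits, $\dot F=-\beta(F)|\nabla F|^2<0$, so $F$ strictly decreases, while the transverse term $\alpha(F)(\nabla F)^{\perp}$ drives trajectories to wind around the level curves. Away from this collar --- in particular at interior points of $O$ where $\nabla F$ may vanish, and on $\Omega$ where $F$ is only $C^\infty$ --- I would patch in a classical Vinograd $C^\infty$ field via a partition of unity that is \emph{arranged to be analytic wherever $F$ is} (the partition-of-unity cutoff being supported away from such points). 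A final multiplication by an analytic positive factor (Theorem~\ref{analytic-1}) gives a $C^\infty$ vector field $f$ on $\mathbb{S}^2$ that is analytic wherever $F$ is, vanishes on $\Omega$, and satisfies $\dot F<0$ on $O$.

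The main obstacle, and the real content of the proposition, is to guarantee that the $\omega$-limit of some orbit is \emph{all} of $\Omega$ rather than a proper subcontinuum. Strict decrease of $F$ and absence of equilibria inside $O$ rule out both interior limit sets and periodic attractors, so Poincar\'e--Bendixson in the simply connected region $O$ forces every orbit to accumulate on a closed connected subset of $\Omega$. Promoting this accumulation to cover all of $\Omega$ --- which need not even be locally connected --- requires engineering the Vinograd patch so that the spiraling orbit visits a set dense in each level curve $\{F=c\}$ as $c\to 0^{+}$. I expect this density requirement to dominate the technical work: the ratio $\alpha(F)/\beta(F)$ (equivalently, the ratio of winding rate to descent rate) must be tuned so that the number of turns made along $\{F=c\}$ before the orbit descends to $\{F=c/2\}$ grows fast enough, uniformly as $c\to 0^{+}$, for the orbit to come within $\varepsilon$ of every point of $\Omega$ for every $\varepsilon>0$.
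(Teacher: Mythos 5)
Your plan, while pointing in a sensible direction, has two gaps that are each fatal, and the paper's actual proof sidesteps both with a single idea that you did not find.

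First, the patching step cannot work. You propose to ``patch in a classical Vinograd $C^\infty$ field via a partition of unity that is arranged to be analytic wherever $F$ is.'' But the conclusion requires $f$ to be analytic on \emph{all} of $O$, and any partition-of-unity cutoff that transitions inside $O$ is necessarily non-analytic on the transition region (a real-analytic function that is locally constant on an open set is globally constant on the component). Since $\nabla F$ can vanish on a large (and a priori ill-controlled) analytic subset of $O$, the transition region cannot be pushed entirely into $\Omega$, and your vector field would fail analyticity exactly where the conclusion demands it. You also cannot simply divide out $|\nabla F|^2$ to avoid equilibria, for the same reason.

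Second, you yourself identify the real content of the proposition --- forcing the $\omega$-limit to be \emph{all} of $\Omega$ rather than a proper subcontinuum --- and then leave it as an expectation (``I expect this density requirement to dominate the technical work \dots must be tuned so that the number of turns grows fast enough''). This is precisely the step that cannot be handled by tuning $\alpha/\beta$ by hand: the level sets $\{F=c\}$ need not be circles, the orbit need not visit a dense subset of a level set before descending, and no uniform lower bound on winding is available from qualitative positivity of $\alpha,\beta$. The argument as written asserts the conclusion rather than proving it.

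The paper's approach is different in a way that dissolves both problems at once. Rather than prescribing $\dot F<0$ along orbits, it writes a single explicit formula $f(u)=\rho(u)\,\bigl(\nabla H_I(u)\times u\bigr)$ with $\rho=(x^2+y^2)G$ and $H_I=\log\bigl(\rho\,e^{-2\Theta_I}\bigr)$, where $\Theta_I$ is a branch of the stereographic polar angle around the fixed boundary point $p_N\in\Omega$. This is globally defined (the apparent dependence on the cut meridian $I$ disappears in the final formula), is analytic wherever $G$ is, is $C^\infty$ on all of $\EE$ because of the $(x^2+y^2)$ factor, and --- crucially --- has $J_I=\rho e^{-2\Theta_I}$ as a first integral. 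Since $\rho\to 0$ as an orbit approaches $\Omega$, constancy of $J_I$ \emph{forces} $\Theta_I\to-\infty$, i.e.\ unbounded winding around $p_N$, with no tuning whatsoever: the descent-vs-winding balance is built in. The remainder of the proof (Lemmas~\ref{realizing-2}--\ref{realizing-4}) then uses analyticity of the level sets of $J_I$ and a Janiszewski-type separation argument to rule out accumulation at any interior point and to upgrade ``unbounded winding near $\Omega$'' to ``$\omega$-limit equals $\Omega$''. That first-integral trick is the missing idea in your proposal; without something like it, neither the analyticity requirement nor the full-boundary requirement can be met.
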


In what follows we assume, without loss of generality and after
applying appropriate analytic transformations, that the north pole
$p_N=(0,0,1)$ of $\EE$ belongs to $\Omega$, that the south pole
$p_S=(0,0,-1)$ belongs to $O$, and that the meridian $I_0$
consisting of the points $(\sqrt{1-z^2},0,z)$, $z\in [-1,1]$, is
included in $O\cup \{p_N\}$.  (More in general, by a
\emph{meridian} we mean an arc in $\EE$ having $p_N$ and $p_S$ as
its endpoints and which is included in $O\cup \{p_N\}$.)

As it turns out, the vector field $f$ we are looking for can be
explicitly derived from $F$, which immediately guarantees that it
satisfies the smoothness requirements from the theorem. Namely,
let $\|\cdot\|$ denote the euclidean norm, let $G:\R^3\setminus
\{(0,0,0)\}\to \R$ be given by $G(u)=F^2(u/\|u\|)$, and define
$f:\R^3\setminus \{(0,0,0)\}\to \R^3$, $f=(f_1,f_2,f_3)$, as
follows:
 \begin{eqnarray*}
 f_1(x,y,z) &=& 2 z (y-x) G(x,y,z) + (x^2 + y^2)
  \left( -y \frac{\partial G}{\partial z}(x,y,z)
  + z \frac{\partial G}{\partial y}(x,y,z) \right), \\
 f_2(x,y,z) &=& -2 z (x + y) G(x,y,z) + (x^2 + y^2)
  \left( x \frac{\partial G}{\partial z}(x,y,z)
  - z \frac{\partial G}{\partial x}(x,y,z) \right), \\
 f_3(x,y,z) &=& (x^2 + y^2)
  \left(2 G(x,y,z) + y \frac{\partial G}{\partial x}(x,y,z)
  -  x \frac{\partial G}{\partial y}(x,y,z)\right).
\end{eqnarray*}
It is easy to check that $f(u)\cdot u=0$ for any $u$. Hence $f$,
when restricted to $\EE$, induces a vector field on $\EE$. Observe
that, because of the definition of $G$, all points of $\Omega$ are
singular points for the corresponding equation
 \begin{equation}
  \label{sistema}
  u'=f|_\EE(u).
 \end{equation}
On the other hand, although $G$ is positive on $O$, there may be
many singular points of \eqref{sistema} in $O$ ($p_S$, for
instance, is one of them).

Next we will show, through a sequence of lemmas, that $\Omega$ is
an $\omega$-limit set for \eqref{sistema}, but first the point
behind the definition of $f$ must be clarified. For this, consider
the semi-space $U=\{(x,y,z)\in \R^3: z<1\}$ and the map
$\pi:U\to\R^2$ given by $\pi(x,y,z)=(x/(1-z), y/(1-z))$, which is
of course the stereographic projection when restricted to $\EE$.
Recall that if a meridian $I$ is given, then there exists an
analytic map $\Lambda_I:\R^2\setminus \pi(I\setminus\{p_N\})\to
\R$ such that $\Lambda_I(x,y)\in \arg(x+iy)$ for any $(x,y)$.
Likewise, let $U_I=U\setminus \pi^{-1}(\pi(I\setminus\{p_N\}))$
and define $\Theta_I:U_I\to \R$ by $\Theta_I=\Lambda_I\circ \pi$.
Note that $\Theta_I$ can be locally written as
$\Theta_I(x,y,z)=k\pi+\arctan(y/x)$ or
$\Theta_I(x,y,z)=k\pi+\arccot(x/y)$ for some integer $k$, and then
 $$
 \nabla \Theta_I(x,y,z) =
 \left(\frac{\partial \Theta_I}{\partial x}(x,y,z),
 \frac{\partial \Theta_I}{\partial y}(x,y,z),
 \frac{\partial \Theta_I}{\partial z}(x,y,z)\right)
 =\left(\frac{-y}{x^2+y^2},\frac{x}{x^2+y^2},0\right).
 $$
Finally, write $\rho(x,y,z)=(x^2+y^2)G(x,y,z)$,
$J_I(u)=\rho(u)e^{-2\Theta_I(u)}$ and $H_I(u)=\log J_I(u)$. While
$J_I$ is well defined in $U_I$, $H_I$ only makes sense in the open
set $V_I=U_I\cap G^{-1}((0,\infty))$. Still, observe that
$O\setminus I\subset V_I$.

Fix a meridian $I$. The key property of $f$ is that, as it can be
easily checked, we can write it as
 $$
 \text{$f(u)=\rho(u) (\nabla H_I(u)\times u)$ whenever $u\in V_I$.}
 $$
This has the important consequence that $\nabla H_I(u)\cdot u'=0$
for some relevant (connected) smooth curves
$u(t)=(x(t),y(t),z(t))$ in $O\setminus I$, which means that $H_I$
(and consequently $J_I$) is constant on them. Such is the case,
for instance, if $u(t)$ is a solution of the system
\eqref{sistema}, because then
 $$
 \nabla H_I(u)\cdot u'=
 \rho(u) \nabla H_I(u)\cdot (\nabla H_I(u)\times u)=0,
 $$
and also if all points of the curve $u(t)$ are singular, because
then $\nabla H_I(u)\times u=0$, which implies that $\nabla
H_I(u)=\kappa(u)u$ for some  scalar map $\kappa$, and therefore
 $$
 \nabla H_I(u)\cdot u'=
 \kappa(u)(u\cdot u')=0,
 $$
the last equality just following from the fact that $u(t)$ is a
curve in the sphere $\EE$.

The above properties can be exploited further. Firstly,
Theorem~\ref{analytic-2} implies that if $\Phi$ is the flow
associated to \eqref{sistema}, then $J_I$ is in fact locally
constant in $\Sing(\Phi)\cap (O\setminus I)$, hence constant on
each of the components of this set. On the other hand, if $p\in
O\setminus \{p_S\}$, then we cannot automatically guarantee the
constancy of some concrete map $J_I$ on the whole maximal solution
$\Phi_p(t)=(x_p(t),y_p(t),z_p(t))$, because although the orbit
lies in $O$ it needs not be fully included in any region
$O\setminus I$. Still, it is clearly possible to find a continuous
choice of the angle (that is, a continuous map $\theta_p:\R\to \R$
satisfying $\theta_p(t)\in \arg(x_p(t)+iy_p(t))$ for any $t\in
\R$), so that
 $$
 w_p(t)=\rho(\Phi_p(t)) e^{-2 \theta_p(t)}
 $$
is constant. We gather these results as a lemma:

\begin{lemma}
 \label{realizing-1}
 The following statement holds:
  \begin{itemize}
  \item[(i)] If $I$ is a meridian, then $J_I$ is constant on any
  component of $\Sing(\Phi)\cap (O\setminus I)$.
  \item[(ii)] If $p\in O\setminus \{p_S\}$ and $I$ is a meridian,
  then  $J_I$ is constant on every semi-orbit of
  $\Phi_p(\R)$ included in $O\setminus I$; moveover,
  the above map $w_p(t)$ is constant.
  \end{itemize}
\end{lemma}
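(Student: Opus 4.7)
The plan is to use the factorization $f(u)=\rho(u)(\nabla H_I(u)\times u)$ on $V_I$ together with the two vanishing-derivative identities displayed just before the statement. These two identities do all the work; the only real subtlety is gluing the constancy across crossings of $I$.

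For (i), at any $u\in \Sing(\Phi)\cap (O\setminus I)$ the equality $f(u)=0$ together with $\rho(u)>0$ forces $\nabla H_I(u)\times u=0$, so $\nabla H_I(u)=\kappa(u)\,u$ for some scalar $\kappa(u)$. By Theorem~\ref{analytic-2}, each component $C$ of $\Sing(\Phi)\cap(O\setminus I)$ is a connected union of analytic arcs (the branches of the local even-order stars, with their centres removed, admit analytic parametrizations). Along any such analytic arc $s\mapsto u(s)\in \EE$,
\[
\tfrac{d}{ds} H_I(u(s)) = \nabla H_I(u(s))\cdot u'(s) = \kappa(u(s))\bigl(u(s)\cdot u'(s)\bigr)=0,
\]
since $u\cdot u=1$ gives $u\cdot u'=0$. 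Hence $H_I$ is constant on each branch, and because any two points of $C$ are joined by a finite chain of such branches, $J_I=e^{H_I}$ is constant on $C$.

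For the first assertion of (ii), any semi-orbit $\gamma$ of $\Phi_p(\R)$ contained in $O\setminus I$ lies in $V_I$ and satisfies $\gamma'(t) = \rho(\gamma(t))(\nabla H_I(\gamma(t))\times \gamma(t))$, so
\[
\tfrac{d}{dt} H_I(\gamma(t)) = \rho(\gamma(t))\,\nabla H_I(\gamma(t))\cdot \bigl(\nabla H_I(\gamma(t))\times \gamma(t)\bigr) = 0,
\]
and $J_I\circ\gamma$ is constant.

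For constancy of $w_p$, note first that $w_p$ is continuous on $\R$ since $\rho$ and $\theta_p$ are. On any open interval $J\subset \R$ with $\Phi_p(J)\subset O\setminus I$, the continuous map $\theta_p|_J-\Theta_I\circ\Phi_p|_J$ takes values in $2\pi\Z$, hence is equal to some constant $2\pi k_J$, giving $w_p=e^{-4\pi k_J}\,J_I\circ\Phi_p$ on $J$; this is constant by the previous step. The main obstacle is to ensure that the union of these intervals is dense in $\R$, so that continuity of $w_p$ patches the constants across the times at which the orbit meets $I$. This reduces to showing that $\{t\in\R:\Phi_p(t)\in I\}$ has empty interior, which follows from real-analyticity of $f$ and $I$ in $O$ in all but the degenerate case when $\Phi_p(\R)\subset I$; that case is removed by choosing a different meridian $I'$ (meridians exist in abundance), proving constancy of $w_p'=\rho\circ\Phi_p\cdot e^{-2\theta_p'}$ for a continuous choice $\theta_p'$ adapted to $I'$, and observing that $\theta_p-\theta_p'$ is a continuous $2\pi\Z$-valued function, hence constant, so constancy of $w_p'$ forces constancy of $w_p$.
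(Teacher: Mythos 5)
Your arguments for (i) and for the first assertion of (ii) match the paper's approach exactly: the two displayed identities $\nabla H_I\cdot u'=0$ (for solution curves and for curves of singular points) do the work, with Theorem~\ref{analytic-2} providing the arc structure needed to chain local constancy across a component of $\Sing(\Phi)\cap(O\setminus I)$. These parts are fine; the paper itself presents them in the same way in the discussion preceding the lemma.

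The argument for the constancy of $w_p$ has gaps, however. First, a meridian is by definition only an arc in $O\cup\{p_N\}$; there is no analyticity hypothesis on $I$, so you cannot invoke ``real-analyticity of $f$ and $I$'' to conclude that $\{t:\Phi_p(t)\in I\}$ has empty interior. That set can have nonempty interior even when $\Phi_p(\R)\not\subset I$ (a meridian may overlap the orbit along a subarc and then wander away), and your dichotomy misses this middle case. Second, even granting density of the good intervals, continuity of $w_p$ alone is not enough to ``patch the constants'': a continuous function can be locally constant on a dense open set and nonconstant (Cantor function). You would need to use that $w_p$ is $C^1$ (or $C^\infty$), so that $w_p'=0$ on a dense set forces $w_p'\equiv 0$. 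Both issues disappear with a pointwise argument, which is the cleanest way to supply the detail the paper leaves implicit: fix $t_0\in\R$; since $\Phi_p(t_0)\in O\setminus\{p_S,p_N\}$ and meridians can be perturbed locally inside $O$, choose a meridian $I$ with $\Phi_p(t_0)\notin I$; on a neighbourhood of $t_0$ one has $\theta_p=\Theta_I\circ\Phi_p+2\pi k$ with $k$ a locally constant integer, so $w_p=e^{-4\pi k}\,J_I\circ\Phi_p$, which is locally constant by the first assertion of (ii). Hence $w_p$ is locally constant at every $t_0$, and therefore constant.
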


\begin{lemma}
 \label{realizing-2}
 The south pole $p_S$ is a repelling focus for $\Phi$.
\end{lemma}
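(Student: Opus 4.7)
The plan is to verify that $p_S$ is hyperbolic, with the two eigenvalues of the linearization of $f|_\EE$ at $p_S$ forming a complex-conjugate pair of positive real part — which is precisely the definition of a repelling focus. I will do this by a direct computation in the chart $\pi_N$, which is an analytic diffeomorphism carrying $p_S$ to the origin of $\R^2$.

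Writing $(X,Y)=\pi_N(x,y,z)$, the inverse stereographic formulas give, to linear order in $(X,Y)$ near the origin, $x = 2X$, $y = 2Y$, and $z = -1$. The terms of the form $(x^2+y^2)(\cdots)$ occurring in each of $f_1,f_2,f_3$ are at least quadratic in $(x,y)$, so contribute nothing to the linearization; all linear contributions come from $2z(y-x)G(x,y,z)$ in $f_1$ and $-2z(x+y)G(x,y,z)$ in $f_2$, with $z$ and $G$ frozen at their values at $p_S$. Since $p_S\in O$ and $F$ does not vanish on $O$, we have $g_0:=G(p_S)=F(p_S)^2>0$. Pushing forward by $D\pi_N$ (at the origin, $\dot X=\dot x/2$ and $\dot Y=\dot y/2$, combined with $x\approx 2X$, $y\approx 2Y$), the linearization in the chart reads
\begin{equation*}
\dot X = 2g_0(X-Y),\qquad \dot Y = 2g_0(X+Y).
\end{equation*}

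The coefficient matrix $2g_0\left(\begin{smallmatrix}1 & -1 \\ 1 & 1\end{smallmatrix}\right)$ has characteristic polynomial $\lambda^2-4g_0\lambda+8g_0^2$ and eigenvalues $\lambda=2g_0(1\pm i)$: both have positive real part $2g_0$ and nonzero imaginary part, so the origin is a hyperbolic unstable focus for this planar linear system, and hence $p_S$ is a repelling focus for $\Phi$. The only bookkeeping that needs care is the pushforward computation through $\pi_N$, which is a routine chain-rule exercise, so I do not anticipate any substantial obstacle. As a consistency check, the conserved quantity $J_I=\rho e^{-2\Theta_I}$ from Lemma~\ref{realizing-1}(ii) behaves near $p_S$ (read in the chart $\pi_N$) like $4g_0(X^2+Y^2)e^{-2\theta}$ with $\theta=\arg(X+iY)$, whose level sets are logarithmic spirals — in agreement with focus-type dynamics.
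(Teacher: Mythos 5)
Your proof is correct and essentially the paper's own: both linearize the vector field in an analytic chart at $p_S$ and read off the eigenvalues $2G(p_S)(1\pm i)$, concluding a repelling focus since $G(p_S)=F(p_S)^2>0$. The only cosmetic difference is the chart — you use the stereographic projection $\pi_N$, while the paper uses the orthogonal projection $(x,y)\mapsto(x,y,\pm\sqrt{1-x^2-y^2})$ onto the tangent plane — but the resulting Jacobian and eigenvalues coincide, as they must since the focus type is chart-invariant.
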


\begin{proof}
It suffices to show that the same statement is true, with respect
to the origin, when we transport the system \eqref{sistema} to
$\R^2$ via the local chart $(x,y)\mapsto (x,y,\sqrt{1-x^2-y^2})$.
The obtained system is
 $$
 g(x,y)=(f_1(x,y,\sqrt{1-x^2-y^2}), f_2(x,y,\sqrt{1-x^2-y^2})).
 $$
Now a direct calculation shows that the jacobian matriz of $g$ at
$(0,0)$ is
 $$
 Jg(0,0)=
 \begin{bmatrix}
 2G(p_S) & -2G(p_S) \\
 2G(p_S) &  2G(p_S)
 \end{bmatrix},
 $$
 its eigenvalues being $2G(p_S)(1\pm i)$. Since $G(p_S)>0$, the
 lemma follows.
\end{proof}

\begin{lemma}
 \label{realizing-3}
Let $P$ be the set of singular points in $O$ which are non-trivial
$\omega$-limit sets (that is, $p\in P$ if and only if there is
$q\neq p$ such that $\omega_\Phi(q)=\{p\}$). Then, for any $p\in
P$, there are only finitely many orbits having $p$ as its
$\omega$-limit set. Moreover, $P$ is discrete, that is, all its
points are isolated, hence countable.
\end{lemma}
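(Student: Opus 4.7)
The plan is to combine the invariant $J_I=\rho e^{-2\Theta_I}$ from Lemma \ref{realizing-1} with the star structure from Theorem \ref{analytic-2}, applied both to the singular set $\Sing(\Phi)\cap O$ and to the level sets of $J_I$. The idea is to force every orbit converging to a point $p\in P$ to lie on a thin analytic curve issuing from $p$; this will control both the number of orbits ending at $p$ and the possibility of accumulation of $P$.

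First I would observe that every $p\in P$ satisfies $p\neq p_N$ (since $p_N\in\Omega$) and $p\neq p_S$ (by Lemma \ref{realizing-2}), so $\rho(p)>0$; hence meridians $I$ avoiding $p$ exist, and $J_I$ is then continuous near $p$ in $O$ and analytic away from $\Sing(\Phi)$. Theorem \ref{analytic-2} applied to $\Sing(\Phi)\cap O$ (which is not all of $O$, else orbits would not move) yields a $2m$-star $Y$ of singular points centred at $p$, dividing a small disk $D\subset O$ around $p$ into $2m$ open sectors.

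\textbf{Finiteness.} Any orbit $\Gamma$ with $\omega_\Phi(\Gamma)=\{p\}$ is eventually trapped in $D$ and, being regular, confined to a single sector $S$ of $D\setminus Y$. Along $\Gamma$ the invariant $J_I$ is constant (Lemma \ref{realizing-1}(ii)), and by continuity at $p$ this constant equals $J_I(p)$, so the tail of $\Gamma$ lies in the level set $\mathcal{L}=\{J_I=J_I(p)\}$. The key observation is that $J_I$ is not locally constant near $p$: otherwise $\nabla H_I\equiv 0$ on an open set, hence $f\equiv 0$ there by the identity $f=\rho\,(\nabla H_I\times u)$, and by analyticity of $f$ on the connected region $O$ this extends globally, contradicting $P\neq\emptyset$. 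Theorem \ref{analytic-2} then shows $\mathcal{L}$ is locally at $p$ a $2l$-star containing $Y$ (via Lemma \ref{realizing-1}(i) and continuity). Inside each sector $S$ only finitely many analytic arcs of $\mathcal{L}$ emanate from $p$; each such arc is regular in $S$ and, since $J_I$ is constant on it, is tangent to $f$, hence supports a unique orbit. Summing over the $2m$ sectors gives finitely many orbits with $\omega$-limit $\{p\}$.

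\textbf{Discreteness.} Assume, for a contradiction, that distinct points $p_k\in P$ converge to some $p_\infty\in O$. Then $p_\infty\in\Sing(\Phi)\cap O$, so the same analysis gives a $2m$-star $Y_\infty$ of $\Sing(\Phi)\cap O$ at $p_\infty$, and the $p_k$ eventually lie on its branches. Choose a meridian $I$ missing a fixed neighbourhood of $p_\infty$ containing all $p_k$ for $k\geq k_0$. Two pigeonhole passes let us assume all $p_k$ lie on one branch $B$ of $Y_\infty$ and that each orbit $q_k$ with $\omega_\Phi(q_k)=\{p_k\}$ eventually enters a single sector $S$ of the corresponding small disk $D$ minus $Y_\infty$. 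Lemma \ref{realizing-1}(i) with continuity gives $J_I(p_k)=J_I(p_\infty)$ for all $k$, so every $q_k$ lies eventually on the single level set $\{J_I=J_I(p_\infty)\}$; as before this level set inside $S$ is a finite union of analytic arcs through $p_\infty$, each a single orbit. Pigeonhole then forces infinitely many of the $q_k$ to coincide, contradicting that the $p_k$ are distinct.

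\textbf{Main obstacle.} The delicate step is the non-constancy of $J_I$ at $p$ (and at $p_\infty$), needed to invoke Theorem \ref{analytic-2} for the level set $\mathcal{L}$. It rests on the explicit formula $f=\rho\,(\nabla H_I\times u)$: local constancy of $J_I$ on an open subset of $O$ makes $\nabla H_I$ proportional to $u$ there, so $f$ vanishes on that open set, and analyticity of $f$ on the connected region $O$ propagates this to all of $O$, ruling out the existence of non-trivial orbits and hence contradicting $P\neq\emptyset$.
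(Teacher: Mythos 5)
Your proof is correct and follows essentially the same route as the paper: it rests on the constancy of $J_I$ on orbits and on components of $\Sing(\Phi)$ (Lemma 5.1), the global non-constancy of $J_I$ obtained from the explicit formula $f=\rho\,(\nabla H_I\times u)$ (the paper cites Lemma 5.2 for the same effect), and the local $2n$-star structure of the analytic level set $J_I^{-1}(\{J_I(p)\})$ supplied by Theorem 3.3, which the paper invokes in a single compressed line. You fill in the details correctly; the only cosmetic remark is that the last pigeonhole in the discreteness step is not really needed, since once the tail of $q_k$ is trapped in a single regular branch of the star at $p_\infty$ its unique interior limit point forces $p_k=p_\infty$ directly.
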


\begin{proof}
Let $p\in P$, fix a meridian $I$ not containing $p$ and say
$J_I(p)=a$. Find a small star $X\subset O\setminus I$ neighbouring
$p$ in $\Sing\Phi$ (recall that $X$ becomes to a $0$-star, that
is, just the point $p$, when $p$ is isolated in $\Sing\Phi$). Now
realize that, by Lemma~\ref{realizing-1} and continuity, $J_I$
also equals $a$ on $X$ and all small semi-orbits ending at points
from $X$. Since $J_I^{-1}(\{a\})$ is analytic, and $J_I$ cannot be
constant on $O\setminus I$ (because then $f$ would vanish on the
whole $O$, which is not true in view of Lemma~\ref{realizing-2}),
the lemma follows immediately from Theorem~\ref{analytic-2}.
\end{proof}

\begin{lemma}
 \label{realizing-4}
Let $p\in O$ and assume that $\alpha_\Phi(p)=p_S$  and
$\omega_\Phi(p)$ is not a singular point of $O$. Then
$\omega_\Phi(p)=\Omega$.
\end{lemma}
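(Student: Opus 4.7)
I would argue by contraposition: assume $\omega_\Phi(p)\neq\Omega$ and derive that $\omega_\Phi(p)$ must be a singleton singular point of $O$, contradicting the hypothesis. Since $f$ vanishes on $\Omega$ and the flow $\Phi$ is $C^\infty$, the orbit $\Phi_p(\R)$ stays in $O$ by uniqueness, hence $\omega_\Phi(p)\subset\Cl O$. Inside $O$ the vector field is analytic, so classical Poincar\'e--Bendixson plus the absence of non-trivial recurrences restricts $\omega_\Phi(p)$ to one of four forms: a singleton singular point of $O$ (the case we are trying to derive), a periodic orbit in $O$, a polycycle of singular points in $\Sing\Phi\cap O$ joined by heteroclinic arcs, or a set meeting $\Omega$.

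The bulk of the work is to rule out periodic orbits and polycycles lying fully inside $O$; the Hamiltonian-like identity $f=\rho(\nabla H_I\times u)$ and Lemma~\ref{realizing-1} are the main tools. Suppose $\gamma\subset O$ is a periodic orbit which is the $\omega$-limit. Let $n$ be the winding number of $\gamma$ around the $z$-axis. If $n\neq 0$, then $\theta_p(t)\to\pm\infty$ as $t\to\infty$ (each revolution around $\gamma$ adds $\pm 2\pi n$ to $\theta_p$); constancy of $w_p(t)=\rho(\Phi_p(t))e^{-2\theta_p(t)}$ would force $\rho(\Phi_p(t))\to 0$ or $\infty$, but $\rho$ is bounded on $\EE$ and strictly positive on the compact set $\gamma$ (which avoids $\{p_N,p_S\}$), contradiction. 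If $n=0$, then $p_N$ and $p_S$ lie in the same complementary disk of $\gamma$, so I can choose a meridian $I$ disjoint from $\gamma$. On the simply connected piece of $V_I$ containing $\gamma$, $H_I$ is a genuine single-valued analytic first integral, and since $\gamma$ carries no singular points, $c=H_I|_\gamma$ is a regular value in a neighbourhood of $\gamma$; thus $\gamma$ is an isolated connected component of its own level set, so no distinct orbit on $\{H_I=c\}$ can accumulate at $\gamma$, contradicting that $\Phi_p\neq\gamma$ has $\omega_\Phi(p)=\gamma$. Polycycles are excluded by the same level-set argument: Theorem~\ref{analytic-2} controls the local star structure at each singular vertex $q_i$, Lemma~\ref{realizing-1}(i) ensures $H_I$ is constant on each analytic component of $\Sing\Phi\cap O$, and then the analytic rigidity of the level set $\{H_I=c\}$ precludes a spiral accumulation of a separate orbit.

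What remains is the case in which $\omega_\Phi(p)\cap\Omega\neq\emptyset$, and my task is to upgrade this to $\omega_\Phi(p)\supset\Omega$ and then exclude extra pieces in $O$. Since $\Omega$ is connected (as $O$ is a simply connected proper region of $\EE$), it suffices to show that $\omega_\Phi(p)\cap\Omega$ is open in $\Omega$ (closedness is automatic). Given $q\in\omega_\Phi(p)\cap\Omega$ and $t_n\to\infty$ with $\Phi_p(t_n)\to q$, constancy of $w_p$ together with $\rho(\Phi_p(t_n))\to 0$ forces $\theta_p(t_n)\to-\infty$. I would then pick a small disk $D$ around $q$ with $\Bd D\subset O$, a meridian $I'$ not through $q$, and analyze the $H_{I'}$-level curves inside $D$: since $H_{I'}\to-\infty$ on approach to $\Omega$ and the orbit carries arbitrarily negative values of $H_{I'}$ into $D$, the geometry of these level curves near $\Omega$ (which are pushed up against the boundary) forces $\Phi_p$ to sweep along $\Omega\cap D$ before escaping $D$, so every point of $\Omega\cap D$ lies in $\omega_\Phi(p)$. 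This gives openness.

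Finally, once $\omega_\Phi(p)\supset\Omega$, any additional point $q^\ast\in\omega_\Phi(p)\cap O$ would yield (via Theorem~\ref{analytic-4} and the dichotomy already established, applied to the forward orbit through $q^\ast$ which lies in $\omega_\Phi(p)$) either a singleton singular in $O$ or all of $\Omega$ as its own $\omega$-limit; the former gives $\omega_\Phi(p)$ a singular point of $O$ as an isolated component, which is impossible for a continuum properly containing $\Omega$, while the latter is harmless but produces no extra piece. Either branch, together with the hypothesis that $\omega_\Phi(p)$ is not a singular point of $O$, forces $\omega_\Phi(p)=\Omega$. \emph{The main obstacle is the openness step}: it is the one place where one must translate a purely topological conclusion ($\omega_\Phi(p)$ fills $\Omega$) into a concrete geometric statement about how the orbit, guided by the conserved quantity $w_p$ and the Hamiltonian $H_I$ that blows down to $-\infty$ on $\Omega$, drags itself along the boundary.
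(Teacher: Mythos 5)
Your proposal takes a genuinely different route from the paper. The paper first proves $\omega_\Phi(p)\subset\Omega$ directly (any point $q\in\omega_\Phi(p)\cap O$ gives, via Theorem~\ref{analytic-4}, a semi-flow box and a spiralling sequence of crossings; then either a meridian $I$ can be chosen so that $J_I^{-1}(\{a\})$ is an analytic set that is not locally a star at $q$, contradicting Theorem~\ref{analytic-2}, or the crossings wind, so $|\theta(t_n)|\to\infty$ and $w_p$ cannot be constant), and only then proves the reverse inclusion through a global winding-and-exhaustion argument: $\theta(t)\to-\infty$ forces the orbit to cross $I_0$ at points approaching $p_N$, and the resulting circles $C_n=A_n\cup u([t_n,s_n])$ enclose regions $R_n$ whose union exhausts $O$, whence $\Omega'=\Omega$. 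You instead set up a Poincar\'e--Bendixson case analysis, prove $\Omega\subset\omega_\Phi(p)$ by a local openness argument, and then try to expel extra pieces inside $O$.

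Two of your steps have concrete gaps. First, your openness step asks for a small disk $D$ around $q\in\Omega$ with $\Bd D\subset O$; this is impossible whenever $\Omega$ is not contained in $D$, because $\Omega$ is a continuum passing through $q$ and must therefore meet $\Bd D$. Even setting this aside, the sentence ``the geometry of these level curves near $\Omega$ \dots forces $\Phi_p$ to sweep along $\Omega\cap D$'' is precisely the conclusion that needs proving, not a proof: near $\Omega$ the level sets of $H_{I'}$ need not surround $\Omega\cap D$ in any controllable way, and the orbit could in principle enter and leave a neighbourhood of $q$ while remaining far from other points of $\Omega\cap D$. The paper sidesteps this entirely with the global meridian-crossing exhaustion, which is where the actual content lies. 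Second, your final paragraph (excluding a point $q^*\in\omega_\Phi(p)\cap O$) is not sound as written: if $\omega_\Phi(q^*)$ were a singleton singular point $s\in O$, that $s$ would not be an ``isolated component'' of $\omega_\Phi(p)$ --- it is connected to the orbit through $q^*$ inside $\omega_\Phi(p)$, so no contradiction is obtained that way. In effect you need the paper's argument that no point of $O$ can lie in $\omega_\Phi(p)$ at all, and that argument must be run directly (via the analytic rigidity of $J_I^{-1}(\{a\})$ or the winding of $\theta$), not deduced from the Poincar\'e--Bendixson taxonomy. Your periodic-orbit exclusion and the use of the conserved quantity $w_p$ are on the right track and close to the paper in spirit, but the proposal as it stands does not close the key inclusion $\omega_\Phi(p)\supseteq\Omega$ nor the exclusion of points of $O$.
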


\begin{proof}
Rewrite $u(t)=\Phi_p(t)$, $\Gamma=\Phi_p(\R)$,
$\theta(t)=\theta_p(t)$, $w(t)=w_p(t)$, $\Omega'=\omega_\Phi(p)$.

First we show that $\Omega'\subset \Omega$. Suppose not to find
$q\in \Omega'\cap O$ and (using Theorem~\ref{analytic-4}) a
semi-flow box $M\subset O$ whose border $B$ is in $\Omega'$ and such
that $\Gamma$ accumulates at $B$ from $M\setminus B$. If
$h:[-1,1]\times[0,1]\to M$ is the corresponding homeomorphism,
then the points $q_n=u(t_n)$ ($t_n\geq 0$)  of $\Gamma$
intersecting $A=h(\{0\}\times [0,1])$ converge monotonically, as
time increases, to $q$. If $A_n$ is the arc in $A$ with endpoints
$q_n$ and $q_{n+1}$, then two possibilities arise: either one of
the circles $C_n=A_n\cup u([t_n,t_{n+1}])$ separates $q$ and
$p_N$, or neither of them does.

Assume that the first possibility holds. If, say, $C_{n_0}$
separates $q$ and $p_N$, then there is a meridian $I$ such that
neither $u([t_{n_0},\infty))$ nor $\Omega'$ intersect it (we are
using $\alpha_\Phi(p)=p_S$).  Hence $J_I$ equals to a constant $a$
on $u([t_{n_0},\infty))$ (Lemma~\ref{realizing-1}(ii)). Moreover,
by continuity, $J_I= a$ on $\Omega'$ as well. Now, since
$J_I^{-1}(\{a\})$ is an analytic set which it is not locally a
star at $q$, we get $J_I^{-1}(\{a\})=O\setminus I$
(Theorem~\ref{analytic-2}), which is impossible.

If the second possibility holds, then all curves $C_n$ have the
same winding number $\nu\in \{-1,1\}$ around $p_N$, and
$\theta(t_{n+1})-\theta(t_n)\to 2\pi \nu$ as $n\to \infty$.
Therefore, $|\theta(t_n)|\to \infty$. Since $\rho$  is positive in
$q$, it is impossible that $w(t)$ is constant, contradicting
Lemma~\ref{realizing-1}(ii). This concludes the proof that
$\Omega'\subset \Omega$.

We are now ready to prove $\Omega'=\Omega$. Note firstly that,
since $d(u(t),\Omega)\to 0$ as $t\to \infty$ (because
$\Omega'\subset \Omega$) and $G$ vanishes at $\Omega$, the only
way for Lemma~\ref{realizing-1}(ii) to hold is that $\theta(t)\to
-\infty$ as $t\to \infty$. We can, of course, assume
$\theta(0)\geq 0$, hence the last and first numbers $t_n$ and
$s_n$ respectively satisfying $\theta(t_n)=-2\pi (n-1)$ and
$\theta(s_n)=-2\pi n$, $n\geq 1$, are well defined. Moreover, if
$A_n$ are the arcs in $I_0$ with endpoints $p_n=u(t_n)$ and
$q_n=u(s_n)$, then all circles $C_n=A_n\cup u([t_n,s_n])$ have
winding number $-1$ around $p_N$, hence they separate $p_S$ and
$p_N$. Let $R_n$ denote the open disk in $O$ (recall that $O$ is
simply connected) enclosed by $C_n$ and construct a sequence of
disks $(M_k)_{k=1}^\infty$ in $O$ such that $p_S\in M_k$ for any
$k$ and $\bigcup_{k=1}^\infty M_k= O$. If $M_k$ is given, say
$d(M_k,\Omega)=\delta>0$, and $n$ is large enough such that
$\max_{c\in C_n} d(c,\Omega)<\delta$, then we get $C_n\cap \Bd
M_k=\emptyset$, which together with $p_S\in M_k\cap R_n$ implies
$M_k\subset R_n$. Therefore, we get $O=\bigcup_{n=1}^\infty R_n$,
and recall that $\Bd O=\Omega$. This implies that if $q\in \Omega$
and $W$ is an arbitrarily small neighbourhood of $q$, then there
is some $R_n$ (and therefore some $C_n$) intersecting $W$. Add to
this that $\diam A_n\to 0$ to easily conclude $\Omega'=\Omega$, as
we desired to prove.
\end{proof}

Proposition~\ref{realizing} easily follows from the previous
lemmas. Namely, there are at most countably many non-trivial
orbits in $O$ whose $\omega$-limit set is a singular point of $O$
(Lemma~\ref{realizing-3}). In particular, there is $p\in O$ such
that $\alpha_\Phi(p)=\{p_S\}$  and $\omega_\Phi(p)$ is not a
singular point of $O$ (Lemma~~\ref{realizing-2}). Then
$\omega_\Phi(p)=\Omega$ by Lemma~~\ref{realizing-4}.

\section{Proof of Theorem~\ref{teoB}}
 \label{proofB}

The main idea behind the proof of Theorem~\ref{teoB} is simple
enough: if $\Omega$ is the boundary of a shrub $A$, then we first
find a homeomorphism $h:\EE\to \EE$ mapping $\Omega$ to a set
$h(\Omega)$, ``as analytic as possible'', and then apply
Proposition~\ref{realizing} to realize it as the $\omega$-limit
set of an ``as analytic as possible'' vector field. The procedure
is easier when the leaves of $A$ are ``well-behaved'': therefore,
we discuss this case in advance, in Subsection~\ref{casosimple}
below. Then we deal with the general case in
Subsection~\ref{general-case}.

\subsection{The simple case}
 \label{casosimple}

The ``as analytic as possible'' set we have just referred to will
be constructed by, informally speaking, pasting segments and
hypocycloids. Thus, to begin with, we must guarantee the
analyticity of these sets.

\begin{lemma}
 \label{analytic-segment}
 The open segment $(-1,1)\times \{0\}$ is analytic in $\R_\infty^2$ minus
 its two endpoints.
\end{lemma}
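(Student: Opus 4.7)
The plan is to exhibit an explicit real analytic function $F:\mathbb{R}_\infty^2\setminus\{(-1,0),(1,0)\}\to\R$ whose zero set is precisely $(-1,1)\times\{0\}$. The first step is to apply the M\"obius transformation $w(z)=(z-1)/(z+1)$ (identifying $\mathbb{R}_\infty^2$ with $\C_\infty$ via $z=x+\ii y$), which is an analytic diffeomorphism of the sphere onto itself sending $1\mapsto 0$, $-1\mapsto\infty$ and $\infty\mapsto 1$. This carries the open segment $(-1,1)\times\{0\}$ bijectively onto the negative real ray $(-\infty,0)$ and identifies $\mathbb{R}_\infty^2\setminus\{(\pm 1,0)\}$ with $\C\setminus\{0\}=\R^2\setminus\{(0,0)\}$, reducing the problem to exhibiting a real analytic function on the punctured plane whose zero set is exactly the negative real ray.

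For that purpose, I would set $\phi(u,v)=\sqrt{u^2+v^2}+u$. On $\R^2\setminus\{(0,0)\}$ the radicand $u^2+v^2$ is a strictly positive analytic function, so its square root is real analytic there, and hence so is $\phi$. Moreover, $\phi(u,v)=0$ forces $u=-\sqrt{u^2+v^2}\leq 0$ and, upon squaring, $u^2=u^2+v^2$, i.e., $v=0$; hence, excluding the origin, $\phi^{-1}(0)=(-\infty,0)\times\{0\}$, as desired. Pulling $\phi$ back through $w$ produces the required function $F=\phi\circ w$.

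The only point that requires attention is analyticity at the point at infinity of $\mathbb{R}_\infty^2$ (which would be invisible if one worked only in the identity chart on $\R^2$). But $w$ maps $\infty$ to the regular interior point $(1,0)\in\C\setminus\{0\}$, where $\phi$ is plainly analytic, and M\"obius transformations are analytic diffeomorphisms of the whole sphere, so $F$ automatically inherits analyticity there. There is no deeper obstacle: the whole argument boils down to the observation that, although a non-trivial real analytic function of one variable on $\R$ cannot vanish on $(-1,1)$ alone, the two-dimensional formula $\sqrt{u^2+v^2}+u$ achieves exactly this effect on the corresponding one-dimensional ray once the endpoints have been pushed to $0$ and $\infty$ by a conformal automorphism of the sphere.
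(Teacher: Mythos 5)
Your proof is correct and takes a genuinely different route from the paper's, although both ultimately exploit the same analytic device. The paper writes down a single explicit formula $F(x,y,z)=y^2+(\sqrt{z^2+y^2}+z)^2$ on $\R^3$, checks that its restriction to $\EE$ is analytic away from $(\pm 1,0,0)$ with zero set the lower half of the great circle $y=0$, and then carries this set across $\pi_N$ to $(-1,1)\times\{0\}$. You instead conjugate by the M\"obius automorphism $w(z)=(z-1)/(z+1)$ of $\R_\infty^2\cong\C_\infty$, which sends the two offending endpoints to $0$ and $\infty$ and carries the open segment onto the negative real ray, thereby reducing the claim to producing an analytic function on $\R^2\setminus\{(0,0)\}$ vanishing exactly on $(-\infty,0)\times\{0\}$, for which $\phi(u,v)=\sqrt{u^2+v^2}+u$ works. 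The core trick is the same in both arguments (the paper's $\sqrt{z^2+y^2}+z$ is essentially your $\phi$ in sphere coordinates, and the extra $y^2$ summand in the paper's $F$ is not even needed to cut out the right zero set on $\EE$), but your conjugation isolates the only substantive computation as a model problem on the punctured plane and lets the M\"obius change of variables handle the behaviour at infinity automatically; this is arguably the more conceptual presentation, while the paper's is a self-contained explicit verification in ambient $\R^3$ coordinates requiring no change of variables.
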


\begin{proof}
Let $F: \R^3 \to \R$ be given by
$$
F(x,y,z)=y^2 +(\sqrt{z^2 + y^2} + z)^2.
$$
It is well-defined and continuous in $\R^3$ and analytic in $\R^3
\setminus \{(x,0,0): x \in \R\}$. The restriction of $F$ to $\EE$
is a continuous map whose zeros are the points $(x,y,z) \in \EE$
with $y=0$ and $z \leq 0$, and which is analytic in $\EE \setminus
\{(-1,0,0), (1,0,0)\}$. Call $Z_F$ the set of zeros of $F$. The
image of the set $Z_F$ under the stereographic projection $\pi_N$,
which is exactly the set $(-1,1)\times \{0\}$, is then analytic in
$\R_\infty^2$  minus  its two endpoints.
\end{proof}

For every positive integer $k \geq 3$, \textit{the $k$-cusped
hypocycloid} $H_k$ is the plane curve defined by the parametric
equations
 \begin{equation}
 \label{parhyp}
 \begin{cases}
  x_k(\theta)=
   (k -1) \cos{\theta} + \cos((k-1) \theta), \\
  y_k(\theta)=
   (k -1) \sin{\theta} - \sin((k-1) \theta),
 \end{cases}
 \end{equation}
$\theta \in \R$. Observe that the only values of the parameter
$\theta \in [0, 2 \pi)$ where the derivative of $(x_k(\theta),
y_k(\theta))$ vanishes are $2\pi j/k$, $0\leq j\leq k-1$, thus
arising the \emph{cusps} $k e^{2\pi j\mathbf{i}/k}$ of $H_k$. When
$k=2r$ is even, these cups can be seen as $r$ pairs of opposed
points: they are symmetric  with respect to the origin $\0=(0,0)$,
and a straight line passing through a cusp and the origin meets
the hypocycloid exactly at the cusp and the opposed one: see
Figure~\ref{figura2}.

\begin{figure}
\begin{center}
\begin{tikzpicture}[scale=0.2]
 \draw[smooth,samples=100,domain=0:360]
 plot(
 {32/5*cos(\x)+8/5*cos(4*\x)-30},
 {32/5*sin(\x)-8/5*sin(4*\x)}
 );
 \draw[smooth,samples=100,domain=0:360]
 plot(
 {7*cos(\x)+cos(7*\x)},
 {7*sin(\x)-sin(7*\x)}
 );
 \draw (-10,0.)-- (10,0.);
 \draw (7.353910524340094,7.353910524340094)--
  (-7.353910524340094,-7.353910524340095);
 \draw [fill=black] (0.,0.) circle (3pt);
\end{tikzpicture}
\end{center}
\caption{\label{figura2} The $5$-cusped hypocycloid (left) and the
$8$-cusped hypocycloid with some arcs (right).}
\end{figure}
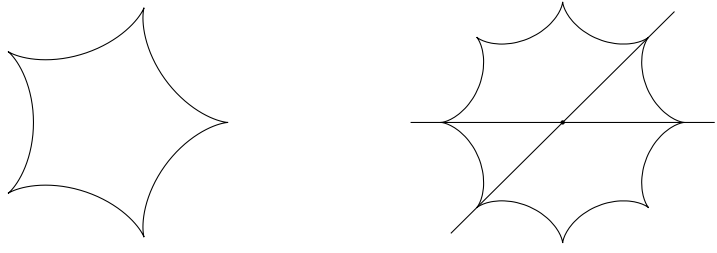

\begin{lemma}
 \label{analytic-hypo}
For every $k \geq 3$, the $k$-cusped hypocycloid  $H_k$ is an
algebraic set (the set of zeros of a polynomial map), and then an
analytic set in $\RR$.
\end{lemma}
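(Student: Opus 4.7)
The plan is to construct a polynomial $P \in \mathbb{R}[x, y]$ whose real zero set equals $H_k$, by eliminating the parameter $\theta$ from the parametrization~\eqref{parhyp} via classical resultant theory. Using the Chebyshev identities $\cos((k-1)\theta) = T_{k-1}(\cos\theta)$ and $\sin((k-1)\theta) = \sin\theta \cdot U_{k-2}(\cos\theta)$, and setting $c := \cos\theta$, $s := \sin\theta$, I would rewrite \eqref{parhyp} as
\[
x = P_k(c), \qquad y = s\cdot Q_k(c),
\]
where $P_k(c) := (k-1)c + T_{k-1}(c)$ and $Q_k(c) := (k-1) - U_{k-2}(c)$ are polynomials in $\mathbb{Z}[c]$. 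Squaring the second equation and using $s^2 = 1 - c^2$ yields $y^2 = (1-c^2)\, Q_k(c)^2$, a polynomial relation free of $s$.

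Next, I would form the resultant
\[
R(x, y) := \operatorname{Res}_c\!\bigl(x - P_k(c),\; y^2 - (1-c^2) Q_k(c)^2\bigr) \in \mathbb{R}[x, y^2] \subset \mathbb{R}[x, y].
\]
By the defining property of the resultant, $R(x_0, y_0)=0$ whenever the two polynomials in $c$ share a common root. In particular, for every $(x,y)\in H_k$ the value $c = \cos\theta$ is a common root, so $R$ vanishes identically on $H_k$; this gives the inclusion $H_k \subseteq Z(R)$, where $Z(R) := \{(x,y)\in\mathbb{R}^2 : R(x,y)=0\}$.

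The hard and crucial step is the reverse inclusion $Z(R) \subseteq H_k$, possibly after replacing $R$ by a suitable factor. The strategy is as follows: a real zero $(x_0, y_0)\in Z(R)$ produces a common complex root $c_0$ of $x_0 - P_k(c)$ and $y_0^2 - (1-c^2)Q_k(c)^2$. The nonnegativity constraints $y_0^2 \geq 0$ and $(1-c^2)Q_k(c)^2 \geq 0$ for real $c\in[-1,1]$, combined with the reality of all coefficients, should force $c_0 \in [-1,1]$ outside a finite set of cusp-like degenerations (where $Q_k(c_0)=0$ or $c_0=\pm 1$), which can be checked directly by evaluation. Writing $c_0 = \cos\theta_0$ and choosing the sign of $\sin\theta_0$ to match $y_0 = \sin\theta_0 \cdot Q_k(c_0)$ then shows $(x_0, y_0) = (x_k(\theta_0), y_k(\theta_0))\in H_k$.

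Any extraneous irreducible components of $Z(R)$ (coming from spurious complex common roots of the two polynomials in $c$ that do not correspond to real parameter values) can be discarded by factoring $R$ in the UFD $\mathbb{R}[x,y]$ and keeping only the factor that vanishes on $H_k$; the resulting polynomial cuts out $H_k$ exactly. The main obstacle is precisely this final cleanup of extraneous components: the algebraicity of $H_k$ itself is guaranteed by elimination theory, but identifying the correct defining polynomial requires careful attention to the complex-root structure of $R$. Once algebraicity is established, analyticity in $\RR$ is immediate, as every polynomial is analytic.
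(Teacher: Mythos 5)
Your setup via elimination theory is the right general idea and overlaps with the paper's strategy, but the proposal as written has a genuine gap precisely at the step you yourself flag as ``the hard and crucial step,'' and that step is not a cleanup afterthought: it \emph{is} the content of the lemma.

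Concretely: the resultant $R(x_0,y_0)=0$ only tells you that $x_0-P_k(c)$ and $y_0^2-(1-c^2)Q_k(c)^2$ share a \emph{complex} common root $c_0$. You write that the nonnegativity constraints and the reality of the coefficients ``should force $c_0\in[-1,1]$,'' but no argument is given, and none of the listed facts delivers this. Reality of the coefficients only tells you that complex roots come in conjugate pairs; nothing prevents a nonreal $c_0$ (or a real $c_0$ with $|c_0|>1$) from solving both equations for a pair $(x_0,y_0)\notin H_k$. Indeed, one should generically expect $Z(R)$ to contain extraneous components. Your fallback --- pass to the irreducible factor of $R$ that vanishes on $H_k$ --- does not save the argument either: an irreducible $P\in\R[x,y]$ vanishing on $H_k$ has real zero set equal to the real points of the Zariski closure of $H_k$, and it is not at all obvious, and you give no reason, why this real locus cannot strictly contain $H_k$. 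So the final assertion ``the resulting polynomial cuts out $H_k$ exactly'' is exactly what remains to be proved.

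This is where the paper's choice of uniformizing variable pays off. Working with $z=e^{\mathbf{i}\theta}$ instead of $c=\cos\theta$, the paper encodes $x$ and $y$ \emph{separately} (no squaring, no loss of the sign of $\sin\theta$) as two polynomial conditions $p_x(z)=0$, $q_y(z)=0$, and then proves the decisive algebraic fact: \emph{any} common complex root $z_0$ of $p_x$ and $q_y$ automatically satisfies $|z_0|=1$. The proof of that fact uses the reflection identity $\overline{z}^{\,n+1}v(1/\overline{z})=\overline{u(z)}$ for the auxiliary polynomials $u=\tfrac12 z^{-(n-1)}(p_x+q_y)$ and $v=\tfrac12(p_x-q_y)$, which is a structural symmetry of the $z$-parametrization that has no counterpart in the $c=\cos\theta$ picture. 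Once there are no extraneous common roots, the resultant $R(p_x,q_y)=P_1+\mathbf{i}P_2$ vanishes at $(x,y)$ if and only if $(x,y)\in H_k$, and $F=P_1^2+P_2^2$ finishes the job. To salvage your version you would need an analogue of that no-extraneous-roots lemma for the pair $\bigl(x-P_k(c),\,y^2-(1-c^2)Q_k(c)^2\bigr)$, and supplying it is the real work.
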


\begin{proof}
After writing  $z=e^{\mathbf{i} \theta}$ and $n=k-1$, the
hypocycloid can be described, in parametric form, as
\begin{eqnarray*}
 X(z) &=& n \frac{z + z^{-1}}{2} + \frac{z^{n} + z^{-n}}{2}, \\
 Y(z) &=& n \frac{z - z^{-1}}{2 \mathbf{i}} - \frac{z^{n} - z^{-n}}{2 \mathbf{i}},
\end{eqnarray*}
where now $z \in \Sd\subset \C$. The zeros of the polynomial
$F(x,y)$ we are looking for must be, exactly, those points $(x,y)
\in \R^2$ for which there exists some $z \in \Sd$ such that
$X(z)=x$ and $Y(z)=y$ or, equivalently, the points $(x,y)$ for
which the complex polynomials
\begin{eqnarray*}
p_x(z)&:=& z^{2 n} + n z^{n+1} - 2 x z^n   + n z^{n-1} + 1, \\
q_y(z)&:=& z^{2 n }- n z^{n+1} + 2 \mathbf{i} y z^n + n z^{n-1} -
1,
\end{eqnarray*}
have a common root in $\Sd$, which amounts to simply say that
$p_x(z)$ and $q_y(z)$ have some common root because, as we next
show, if $z_0$ is a root of both $p_x(z)$ and $q_y(z)$, then
$z_0\in \Sd$.

Write $w=x + \mathbf{i} y$, $u(z)= z^{n+1} - \overline{w} z + n$
and $v(z)= n z^{n+1} - w n z^{n} + 1$. Then $p_x(z) + q_y(z) = 2
z^{n-1} u(z)$ and $p_x(z) - q_y(z) = 2 v(z)$,  hence $z_0$ is also
a common root of $u(z)$ and $v(z)$ (because $z_0\neq 0$). Now
notice that, for every $z \in \C \setminus \{0\}$,
$\overline{z}^{n+1} v(1/\overline{z}) = \overline{u(z)}$. This
means that there is another root $z_1$ of $u(z)$ satisfying $z_0=
1/\overline{z_1}=z_1/\left|z_1\right|^2$. If we write $\sigma=z_1$
and $r=1/\left|z_1\right|$, we get $u(\sigma)=u(r^2 \sigma)=0$,
that is,
\begin{eqnarray*}
\sigma^{n+1} - \overline{w} \sigma + n &=& 0,\\
r^{2n + 2} \sigma^{n+1} - r^2 \overline{w} \sigma + n &=& 0,
\end{eqnarray*}
which also implies $(r^{2 n} - 1)r^2\sigma^{n+1} + (1 - r^2) n=0$.
Taking moduli, this last expression can only be true if
 $$
 (r^2 -1)n r^{n-1} = r^{2n}- 1=
 (r^2 -1)(1 + r^2 + \cdots + r^{2(n-1)}),
 $$
that is, either $r=1$ or
 \begin{eqnarray*}
 0&=&1 + r^2 + \cdots + r^{2(n-1)}-n r^{n-1}\\
  &=&\sum_{j=0}^{n-1} r^{2j}-r^{n-1}\\
  &=&\sum_{j=0}^{n'-1} r^{2j}-2r^{n-1}+r^{2(n-1-j)}\\
  &=&\sum_{j=0}^{n'-1}r^{2j}(1-2r^{n-1-2j}+r^{2(n-1-2j)})\\
  &=&\sum_{j=0}^{n'-1}r^{2j}(1-r^{n-1-2j})^2,
 \end{eqnarray*}
where $n'$ is the integer part of $n/2$. This is only possible if,
again,  $r=|z_1|=1$. Then $|z_0|=1$ as well, as we desired to
show.

We have proved that $(x,y)$ belongs to the hypocycloid if and only
if the polynomials $p_x(z)$ and $q_y(z)$ have a common root, which
is also equivalent to $R(p_x,q_y)=0$, with $R(p_x,q_y)$ denoting
the resultant of the polynomials $p_x(z)$ and $q_y(z)$
\cite[Theorem~8.27, p. 151]{SPINDLER}. Now, since $R(p_x,q_y)$ is
a determinant calculated on the coefficients of $p_x(z)$ and
$q_y(z)$, we get $R(p_x,q_y)=P(x,y)$, where $P(x,y)$ is polynomial
(with complex coefficients) on $x$ and $y$. If  $P(x,y)=P_1(x,y) +
\mathbf{i} P_2(x,y)$, with $P_1(x,y)$ and $P_2(x,y)$ polynomials
with real coefficients, we finally obtain that the hypocycloid is
the set of zeros of $F(x,y)=P_1(x,y)^2 + P_2(x,y)^2$.
\end{proof}

\begin{remark}
 \label{algebraic}
An algebraic set $A\subset \RR$  \emph{needs not be} analytic in
$\R_\infty^2$ (equivalently, analytic in $\EE$ via the
stereographic projection): it is $\pi_N^{-1}(A)\cup \{p_N\}$ which
is analytic in $\EE$. In fact, if $A$ is the set of zeros of a
polynomial $P(x,y)$, then $\pi_N^{-1}(A)$ is the set of zeros in
$\EE$ of the map $Q(x,y,z)=P(x/(1-z),y/(1-z))$, hence
$\pi_N^{-1}(A)\cup \{p_N\}$ is the set of zeros in $\EE$ of
$(1-z)^n Q(x,y,z)$ which, if $n$ is large enough, is well-defined
and polynomial in the whole $\R^3$.

Circumferences, on the other hand, are analytic in $\R_\infty^2$
because they can be seen, in $\EE$, as the intersection of a plane
with the sphere, that is, the restriction to $\EE$ of the set of
zeros of an affine map in $\R^3$.
\end{remark}

In what follows, by a \emph{hypocycloid} we mean, in fact, any
affine deformation $H$ in $\RR$ of  (the disk enclosed by) some
$2r$-cusped hypocycloid as previously defined, while, as usual, a
\emph{segment} is an affine deformation in $\RR$ of the arc
$[-1,1]\times \{0\}$. The non-smooth points at the boundary of $H$
are still called its \emph{cusps}, and two cusps of $H$ are
\emph{opposed} if there is a segment in the hypocycloid connecting
them. Such a segment is called a \emph{diameter} of $H$, and all
diameters intersect at a unique point, the \emph{center} of $H$.
The \emph{radii} of $H$ are then the segments connecting the
center of $H$ with its cusps. From Lemmas~\ref{analytic-segment}
and \ref{analytic-hypo} it follows immediately that any segment is
analytic in $\R_\infty^2$ minus its endpoints, and the boundary of
any hypocycloid is analytic in $\RR$.

Let $M_1,M_2$ be two segments  intersecting exactly at a common
endpoint $p$. We say that $M_1$ and $M_2$ are \emph{aligned} if
$M_1\cup M_2$ is a segment. We say that $M_2$ is \emph{positively}
(respectively, \emph{negatively}) \emph{biased with respect to
$M_1$} if $M_1$ and $M_2$ are not aligned and, after fixing a
segment $M_0$ intersecting $M_1$ and $M_2$ exactly at $p$, and
such that $M_1$ and $M_0$ are aligned, the triod $(M_1,M_0,M_2)$
(when seen in $\EE$ via the identification $\R_\infty^2 \cong\EE$)
is positive (respectively, negative). A segment $M$ and a
hypocycloid $H$ are \emph{aligned} if they intersect exactly at a
cusp of $H$ and $M$ and the radius of $H$ ending at this cusp are
aligned. Finally, two hypocycloids are \emph{aligned} it they
intersect at a common cusp and the corresponding radii are
aligned.

We intend to construct our ``almost analytic'' shrubs via an
appropriate skeleton structure so that we can benefit from
Propositions~\ref{skeleton1} and \ref{skeleton2}. For this we will
need some special thick arcs, not just consisting of segments and
hypocycloids, but additionally ``twisted'' (biased) according to
some rules. The following lemma explains how to do it.

\begin{lemma}
 \label{hypo-segment}
Let $A$ be a thick arc in $\RR$ and fix a proper pair of endpoints
$w,z$ of $A$. Let $Q$ be a set of disconnecting points of $A$ and
let $\mathcal{D}$ be a subfamily of leaves of $A$. We assume that
the union set of $Q$ and the centers of the leaves from
$\mathcal{D}$ is discrete, and that no leaf from $\mathcal{D}$
contains $z$. Assign to each $p\in Q$ (respectively, to each leaf
$D\in\mathcal{D}$) a number $\phi(p)=\pm 1$ (respectively,
$\phi(D)=\pm 1$). Then there are a thick arc $A'$ and a
homeomorphism $h:\RR\to\RR$ with $h(A)=A'$ (homotopic to the
identity when extended to $\R_\infty^2$) satisfying the following
properties:
 \begin{itemize}
 \item[(i)] Each leaf of $A'$ is a hypocycloid and all non-cusp
points of the leaves of $A'$ are exterior points of $A'$. Also,
$h$ maps $w$ to the origin $\0$ and, if $z'=h(z)$ and $\0$ and/or
$z'$ belong to some leaf of $A'$, then they are cusp points. If,
moreover, $I'$ denotes the arc in $A'$ connecting $\0$ and $z'$
and intersecting each leaf at exactly two radii, then any subarc
of $I'$ neither intersecting $h(Q)$ nor containing the center of
any hypocycloid $h(D)$, $D\in \mathcal{D}$, is a segment.
 \item[(ii)]
$A'\subset C_\rho:=\{\0\}\cup \{(x,y): x>0, |y|<\rho x\}$ for some
$\rho>0$.
 \item[(iii)]  Let $p\in Q$ or $ D\in \mathcal{D}$ and let
$p'$ be, depending on the case, either $h(p)$ or the center of the
hypocycloid $h(D)$. Let $M_1'$ and $M_2'$ be the maximal segments
in $I'$ having $p'$ as its common endpoint, with $M_1'$ being the
closer segment to $\0$. If, according to the case, $\phi(p)=1$ or
$\phi(D)=1$ (respectively, $\phi(p)=-1$ or $\phi(D)=-1$), then
$M_2'$ is positively (respectively, negatively) biased with
respect to $M_1'$.
\end{itemize}
\end{lemma}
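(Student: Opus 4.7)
The plan is to parametrize $A$ by its spine, build $A'$ inside $C_\rho$ piece by piece out of segments and hypocycloids according to a linearly-ordered ``recipe'' along that spine, and finally extend the natural map $A\to A'$ to the whole plane via Theorem~\ref{exthomeo}. Let $I\subset A$ be the unique arc connecting $w$ and $z$ that intersects each leaf of $A$ in a diameter; order the leaves $\{D_n\}$ and the points of $Q\cup\{\text{centers of }D:D\in\Dc\}$ by their position along $I$ (parametrize $I$ by $[0,1]$). The discreteness hypothesis implies that between any proper subarc of $I$ not meeting $z$, only finitely many ``marked'' points of $Q$ or centers of leaves from $\Dc$ occur; possible accumulation is only at $z$.

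The construction of $A'\subset C_\rho$ proceeds by induction along $I$, starting at $\0$. Fix $\rho>0$ and a sequence of direction angles $\{\theta_k\}\subset(-\arctan\rho,\arctan\rho)$ available for the successive segments of a polygonal ``spine'' $I'\subset C_\rho$. Between consecutive marked points, insert a straight segment at one of these angles, possibly interrupted by hypocycloids corresponding to ordinary leaves of $A$ (i.e.\ those not in $\Dc$): for such leaves we use a $2r$-cusped hypocycloid and pass $I'$ straight through via two \emph{opposed} cusps, so the entering and leaving segments remain aligned. At a point $p\in Q$, continue $I'$ with a new segment whose direction is rotated slightly counterclockwise or clockwise according as $\phi(p)=+1$ or $-1$. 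At a center of a leaf $D\in\Dc$, insert a small $2r$-cusped hypocycloid (with $r\ge 2$) and exit through a \emph{non-opposed} cusp chosen on the positive or negative side of the incoming radius according to $\phi(D)$; this immediately produces the required bias of $M_2'$ with respect to $M_1'$. Choose the rotations, the sizes of segments, and the diameters of the hypocycloids small enough that the whole construction stays in the strictly convex cone $C_\rho$ and so that, if the recipe is infinite, the diameters of the remaining tails tend to zero, with the entire sequence converging to a single limit point which we take as $z'$. If $w$ (respectively $z$) happens to lie on a leaf $D$, then by hypothesis $D\notin\Dc$ when it contains $z$, so we may choose the corresponding hypocycloid so that $\0$ (respectively $z'$) coincides with a cusp and $I'$ starts (respectively ends) along a radius from that cusp, as required by~(i).

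Call this limit set $A'$: by Proposition~\ref{skeleton1}, applied to the finite partial skeletons produced along the way, it is a thick arc. Every leaf of $A'$ is a hypocycloid by construction and the non-cusp boundary points of each hypocycloid are exterior points of $A'$, giving~(i); inclusion in $C_\rho$ gives~(ii); and the bias assignments were made precisely to guarantee~(iii). The natural bijection $h_0\colon A\to A'$, which matches the spines, identifies the leaves $D_n\leftrightarrow h_0(D_n)$, and sends the cusps of each $h_0(D_n)$ to the points of $D_n$ corresponding to them (including the attachment cusps, and the endpoints $w,z$ when they belong to leaves), is a homeomorphism.

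The final step is to upgrade $h_0$ to a homeomorphism of $\R_\infty^2\cong\EE$ homotopic to the identity. Because all the local orientations in the construction were chosen consistently (each turn and each non-opposed hypocycloid exit agrees with the standard orientation of the plane), the homeomorphism $h_0\colon A\to A'$ preserves the sign of every triod based in $A$; thus $h_0$ preserves the geometrical configuration in the sense of Theorem~\ref{exthomeo}. That theorem then yields a homeomorphism $h\colon\EE\to\EE$ extending $h_0$, and since $h$ preserves senses it is homotopic to the identity. The main obstacle is the bookkeeping for the infinite case: one must ensure simultaneously that (a) the polygonal spine $I'$ never leaves $C_\rho$ despite the infinitely many small rotations imposed by $Q$, (b) the hypocycloids inserted at centers of leaves of $\Dc$ do not collide with neighbouring pieces of $A'$, and (c) the resulting $A'$ is still a thick arc with the prescribed proper endpoints $\0$ and $z'$. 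All three are handled by letting segment lengths and hypocycloid diameters decrease sufficiently fast between consecutive marked points, which is possible because the marked set is discrete along $I$.
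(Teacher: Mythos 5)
Your overall strategy — build $A'$ forward along the spine, then extend the natural bijection $h_0\colon A\to A'$ by Theorem~\ref{exthomeo} — is reasonable in outline but differs from the paper's, and the key technical step has a genuine gap.

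The gap is in the sentence claiming that discreteness forces the marked set to accumulate only at $z$. That deduction is false: a discrete subset of a compact set can accumulate at any point \emph{not} belonging to the set. For example $A=[0,1]$, $w=0$, $z=1$, $\Dc=\emptyset$, $Q=\{\tfrac12+\tfrac1n : n\geq 3\}$: $Q$ is discrete (every point isolated) but accumulates at $\tfrac12\neq z$, and in fact the accumulation set of a discrete set in $[0,1]$ can even be a Cantor set. Your construction proceeds ``between consecutive marked points,'' and converges to ``a single limit point which we take as $z'$''; neither makes sense when the marked set has non-trivial accumulation structure in the interior of the spine. One would then have to stop at each accumulation point, restart, and deal with accumulation points of accumulation points, and your argument gives no mechanism for this. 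This is not a bookkeeping detail: it is the heart of the infinite case. The paper avoids the issue entirely by never building forward along the spine. It first normalizes $A$ to a thick arc $A_0$ lying on $[0,1]\times\{0\}$, then applies a sequence of near-identity homeomorphisms $h_i$ (localized ``shears'' $g_{\delta_i}$ conjugated to each marked point, in an arbitrary order), with $|\delta_i|$ decreasing fast enough that the compositions and their inverses converge uniformly; the geometric consistency of the limit is preserved by a quantitative separating-perpendicular invariant maintained via factors $\mu_i$ with $\prod\mu_i>0$. That scheme handles any accumulation pattern uniformly.

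Two smaller points. First, Theorem~\ref{exthomeo} returns a homeomorphism of $\EE$, which need not fix $\infty$; the lemma asks for a plane homeomorphism $h\colon\RR\to\RR$, so you must further postcompose with a homeomorphism of $\EE$ fixing $A'$ and returning $h(\infty)$ to $\infty$ (possible because $\EE\setminus A'$ is an open disk). Second, ``$h_0$ preserves the sign of every triod'' is asserted without verification; one must actually check that the cyclic order of cusps on each hypocycloid of $A'$ matches the cyclic order of the corresponding boundary points on the leaf of $A$, and that the angular breaks you introduce at points of $Q$ and at centers of $\Dc$-hypocycloids are all compatible with the positive orientation of $\RR$. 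These are fixable, but the first point above is the real obstruction and would need a different, non-forward construction to repair.
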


\begin{proof}
First of all, we map $A$ via a homotopic to the identity
homeomorphism $f$ to a thick arc $A_0$ which is the union of the
segment $I_0=[0,1]\times \{0\}$ (with $w$ mapped to $\0$) and some
balls with diameters included in this segment. If, additionally,
there is a sequence of leaves $(B_n)_{n=1}^\infty$ monotonically
accumulating at $\0$, we can assume that $\diam(B_n)=1/2^{n+1}$
for any $n$ and that any leaf between $\0$ and $B_n$ has diameter
less than $1/2^{n+1}$, which ensures that $A_0\subset C_{1/2}$.

Observe that both $\mathcal{D}$ and $Q$ are countable. Our
construction will proceed in two steps. Firstly we assume that $Q$
is empty, then we consider the general case.

Say $\mathcal{D}=\{D_i\}_{i=1}^\infty$ (if $\mathcal{D}$ is
finite, then the argument is analogous but simpler), and let $p_i$
denote the center of the ball $f(D_i)$. We arrive to $A'$ via a
sequence of intermediate homeomorphisms $h_i:\RR\to \RR$ whose
compositions $h_i^*=h_i\circ\ldots\circ h_1$ will converge to a
homeomorphism $h^*$. This homeomorphism almost provides the set
$A'$ we are looking for, except that the leaves of $A^*=h^*(A_0)$
are not hypocycloids but balls. To conclude one just have to
replace $f$ by another homeomorphism $f^*$ mapping the leaves of
$A$ to slightly deformed hypocycloids having their cusps in the
boundaries of the balls of $A_0$ (so that these
``pseudo-hypocycloid'', after applying $h^*$, become real
hypocycloids). Then $h=h^*\circ f^*$ does the job.

All homeomorphisms $h_i$ are constructed (modulo a translation and
a rotation) using a map $g_\delta$, with $\delta=\delta_i$ a
rational number small enough (in absolute value), which is defined
as follows. Let $\tau_\delta:[-1/2,1/2]\to [-1/2,1/2]$ the (five
pieces) piecewise affine homeomorphism mapping $[-1/4+2|\delta|,
1/4-2|\delta|]$ onto $[-1/4+2|\delta|+\delta,
1/4-2|\delta|+\delta]$ and leaving invariant the intervals
$[-1/2,-1/4]$ and $[1/4,1/2]$. Then, using polar notation, we
define $g_\delta(r e^{2\pi \textbf{i}\theta})=r e^{ 2\pi
\textbf{i} \tau_\delta(\theta)}$. Thus $g_\delta$ leaves invariant
the second and the third quadrant and slightly rotates, with angle
$2\pi\delta$, ``most'' of points in the first and fourth quadrant.

Now, starting from $I_0$ and  $A_0$, and via the procedure
$h_i(I_{i-1})=I_i$ and $h_i(A_{i-1})=A_i$, we get some polygonals
$I_i$ and thick arcs $A_i$ so that the angle points of $I_i$ are
exactly $h_i^*(\{p_1,\ldots,p_i\})$, and if $D$ is a leaf of $A_0$
of center $q$, then the corresponding leaf $h_i^*(D)$ is a ball of
center $h_i^*(q)$ and the same radius as $D$. More precisely, to
define $h_i$, we first transport the point $h_{i-1}^*(p_i)$ (we
mean $h_0^*=\Id$) to $\0$, and the segment of $I_{i-1}$ containing
it to the $x$-axis (so that its closest part to $\0$ falls to the
left of $\0$), via some appropriate translation and rotation. Then
we compose with a certain $g_{\delta_i}$, with the small rational
number $\delta_i$ being positive or negative according to the sign
of $\phi(p_i)$ (and ensuring that all transported points from
$A_{i-1}$, except some of the ball containing $h_{i-1}^*(p_i)$,
are either $2\pi\delta_i$-rotated or left invariant), and finally
apply the reversed rotation and translation. It is clear that if
the numbers $\delta_i$ are sufficiently small, then the maps
$h_i^*$, and similarly their inverses $(h_i^*)^{-1}$, converge
uniformly to a homeomorphism $h^*$ and its inverse $(h^*)^{-1}$.

Observe that, in order this geometrical construction to work
properly, we must be sure that the perpendicular line to $I_{i-1}$
passing through $h_{i-1}^*(p_i)$ do not intersect $A_{i-1}$ except
at the ball enclosing the point. If fact, after fixing a sequence
of numbers $0<\mu_i<1$, say $\mu_i=1-1/2^i$, close enough to $1$
so that $\prod_{i=1}^\infty \mu_i>0$, more will be true: if $u\in
A_{i-1}$ disconnects $A_{i-1}$, and $\sigma$ is the perpendicular
to $I_i$ passing through $u$, then not only $\sigma$ will
intersect $A_{i-1}$ just at $u$, and similarly the perpendicular
$\sigma'$ to $I_i$ passing through $u'=h_i(u)$  will intersect
$A_i$ just at $u'$, but for any $v\in A_{i-1}$, $v'=h_i(v)$, we
have $d(v',\sigma')\geq \mu_i d(v,\sigma)$. Clearly, this can be
guaranteed, inductively, just using numbers $\delta_i$ small
enough. Then the limit polygonal $I^*=h^*(I_0)$ satisfies the
analogous property for $A^*$:  if $v^*\in A^*$ disconnects $A^*$,
and $\sigma^*$ is the perpendicular to $I^*$ passing through
$v^*$, then $\sigma^*$ intersects $A^*$ just at $v^*$. (This will
be needed in the proof of the general case.) Also, observe that
$A^*$ is included in the right half-plane (and intersects the
$y$-axis just at $\0$). If we are in the accumulating case
described in the first paragraph, the construction can be refined
to guarantee $A^*\subset C_{1/2}$. After replacing the balls in
$A^*$ by hypocycloids with sufficiently many cusps (this can be
done without losing connectedness: this is the reason why we
required the numbers $\delta_i$ to be rational), we get, as
explained before, the homeomorphism $h$ and the thick arc $A'$ we
are looking for. Just one detail is pending: guaranteeing that
$A'$ is included in some set $C_\rho$. If there is an arc (a
segment) in $\Bd A'$ or a leaf (a hypocycloid) in $A'$ containing
$\0$, this is trivial; otherwise we are in the accumulating case,
and $C_{1/2}$ can be used. We remark that $I^*$ is, indeed, the
arc $I'$ from the condition (i) in the lemma.

Recall that we have assumed, until now, that $Q$ is empty. If
$Q\neq \emptyset$, then the thick arc we obtain applying the
previous construction to $\mathcal{D}$ is not yet that we need.
Therefore, instead of calling that set $A'$, we call it $B_0$ and
use it, together with the limit polygonal $J_0=I^*$, as the new
starting point for a similar construction, again based on the maps
$g_\delta$, now around the points from $Q$. Such a construction,
providing the desired set $A'$, is possible due to the
above-mentioned property of $A^*$, and hence of $B_0$, of
separation by perpendicular lines to $J_0$.
\end{proof}

\begin{remark}
 \label{rem-hypo-segment}
Each hypocycloid from the previous set $A'$ is rounded, that is,
all their cusps belong to the same circumference. After composing
with the linear map $(x,y)\mapsto (x,\epsilon y/\rho)$, and at the
cost of losing roundness (but still having hypocycloids), we can
carry $A'$ into $C_\epsilon$ for any $\epsilon$ as small as we
wish.

Also, observe that the number of cusps of each hypocycloid
$D'=h(D)$ of $A'$ can be chosen sufficiently large so that, if the
number $n=n(D)$ is fixed in advance, and $P'$ is the four (or two)
points set containing the intersection points of $I'$ with $\Bd
D'$ and their opposed cusps in $D'$, then the number of cusps of
$\Bd D'$ between the points of $P'$ is at least $n$.
\end{remark}

It is already time to define the special type of shrub we are
concerned of in this subsection. We say that a shrub $A$ is
\emph{simple} if all nodes belonging to leaves of $A$ are star
points of $\Bd A$. In particular, all cactuses and prickly
cactuses are simple shrubs. A shrub is \emph{very simple} if it is
simple and has no odd cactuses. If $A$ is very simple, then we
denote by $P_A$ the set of nodes of $A$ which are not odd buds of
$A$ (that is, all nodes belonging to some leaf or having even
order) and by $\mathcal{E}_A$ the family of pieces (leaves and
sprigs) of $A$. These sets are, to say so, the places where there
is the ``peril'', to be avoided, of losing analyticity in the
ensuing construction.

Let $A$ be a very simple shrub. If $p\in P_A$, then it disconnects
$A$ into $k$ components, whose closures $B_1,\ldots,B_k$ are
simple shrubs as well, although maybe not very simple. Denote by
$E_1,\ldots,E_k$ the corresponding pieces $E_j\subset B_j$
containing $p$. We divide these pieces into three classes. A piece
$E_j$ is \emph{rigid} \emph{(with respect to $p$)} it either $E_j$
is a sprig or $B_j$ is not very simple (note that, in this second
case, $B_j$ has exactly one odd cactus, and $E_j$ is a leaf from
this cactus). We say that $E_j$ is \emph{flexible} if there is an
infinite connected union of leaves in $B_j$, one of them being
$E_j$. If $E_j$ is neither rigid nor flexible, then is it called
\emph{bland}. Let us assume that a subfamily of non-bland pieces
$\{E_{j_1},\ldots,E_{j_{2r}}\}$ is positively (counterclockwise)
ordered, that is, triods of arcs in consecutive pieces are
positive. We say that it \emph{orientates $p$} (or \emph{it is an
orientation for $p$}) if it contains all rigid pieces. Pairs
$E_{j_l},E_{j_{l+r}}$, $1\leq j\leq r$, are said to be
\emph{opposed} for this orientation. Observe that, since $A$ is
very simple, $p$ always admit an orientation (unless there are no
rigid pieces and at most one flexible leaf with respect to $p$).
We denote by $P_A^*$ the set of nodes from $P_A$ admitting an
orientation. We remark that if an endpoint of a sprig $E$ belongs
to $P_A$, then it also belongs to $P_A^*$.

Similarly, if $E\in \mathcal{E}_A$, let $v_1,\ldots v_l$ be the
points from $P_A\cap \Bd E$ (note that this set may be empty if
$E$ is a sprig). We say that $v_m$ is \emph{rigid (with respect to
$E$)} if, not counting $E$ itself, all pieces containing $v_m$ are
rigid or bland with respect to $v_m$, and the number of rigid
pieces is odd. We say that $v_m$ is \emph{flexible} if there is
some piece, different from $E$, which is flexible with respect to
$v_m$. If $v_m$ is neither rigid nor flexible, then it is called
\emph{bland}. The definitions of \emph{orientation} and that of
\emph{opposed pairs}, are analogous to the previous ones. Except
again in the case when there are no rigid nodes in $\Bd E$ and at
most one flexible node, or $E$ is a sprig and some of its
endpoints is not in $P_A$, $E$ always admits an orientation. In
particular, if $E$ is a sprig and both endpoints $v_1$ and $v_2$
of $E$ belong to $P_A$, then $\{v_1,v_2\}$ is an orientation (the
only possible one). The set of pieces from $\mathcal{E}_A$
admitting an orientation will be denoted by $\mathcal{E}_A^*$.
Figure~\ref{figura3} exhibits some examples of the previously
defined notions.

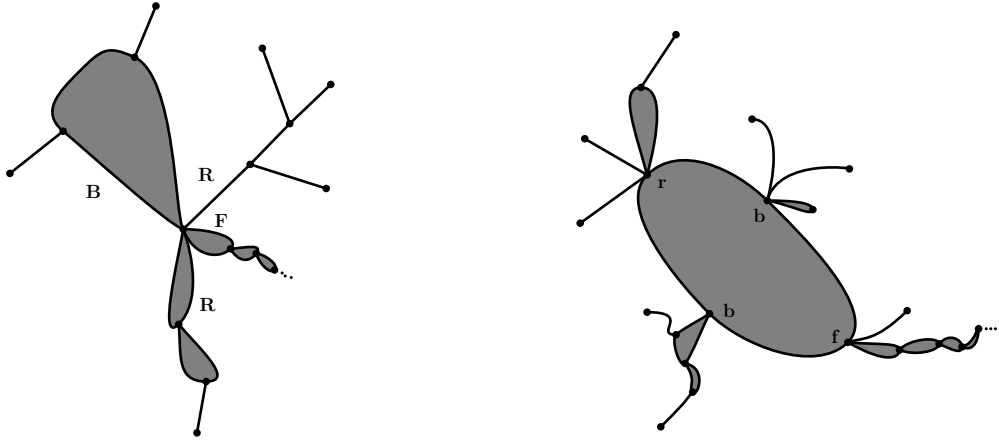
\begin{figure}
\centering
\begin{tikzpicture}[scale=2.0]
%\clip(0.0,0.0) rectangle (69.0,32.0);

\draw[line width=1] (1.29,{3.2-1.70}) -- (1.73,{3.2-1.27});
\draw[line width=1] (1.73,{3.2-1.27}) -- (1.99,{3.2-1.0});
\draw[line width=1] (1.99,{3.2-1.0}) -- (2.26,{3.2-0.74});
\draw[line width=1] (1.81,{3.2-0.5}) -- (1.99,{3.2-1.00});
\draw[line width=1] (2.23,{3.2-1.43}) -- (1.73,{3.2-1.27});

\node[scale=0.6] at (1.44,{3.2-1.34}) {\textbf{R}};

\node[scale=0.6] at (1.54,{3.2-1.64}) {\textbf{F}};

\draw[line width=1] (0.97,{3.2-0.56}) -- (1.11,{3.2-0.22});
\draw[line width=1] (0.5,{3.2-1.05}) -- (0.15,{3.2-1.33});

 \filldraw[line width=1, fill=gray]
     (0.97,{3.2-0.56}) .. controls +(155:0.2) and +(45:0.2) ..  (0.61,{3.2-0.65})
                        .. controls +(225:0.2) and +(140:0.2) ..  (0.5,{3.2-1.05})
                        .. controls +(-40:0.2) and +(165:0.1) ..  (1.29,{3.2-1.70})
                        .. controls +(125:0.1) and +(-25:0.3) ..  (0.97,{3.2-0.56});

    \node[scale=0.6] at (0.7,{3.2-1.45}) {\textbf{B}};

  \filldraw[line width=1, fill=gray]
     (1.29,{3.2-1.70}) .. controls +(260:0.2) and +(225:0.2) ..  (1.26,{3.2-2.33})
                        .. controls +(45:0.2) and +(290:0.2) ..  (1.29,{3.2-1.70});

    \filldraw[line width=1, fill=gray]
     (1.26,{3.2-2.33}) .. controls +(275:0.2) and +(180:0.2) ..  (1.44,{3.2-2.71})
                        .. controls +(0:0.2) and +(-50:0.2) ..  (1.26,{3.2-2.33});

\draw[line width=1] (1.44,{3.2-2.71}) -- (1.38,{3.2-3.05});

\node[scale=0.6] at (1.45,{3.2-2.2}) {\textbf{R}};

\filldraw[line width=1, fill=gray]
     (1.29,{3.2-1.70}) .. controls +(290:0.2) and +(225:0.1) ..  (1.60,{3.2-1.83})
                        .. controls +(45:0.1) and +(0:0.2) ..  (1.29,{3.2-1.70});

\filldraw[line width=1, fill=gray]
     (1.60,{3.2-1.83}) .. controls +(300:0.1) and +(235:0.08) ..  (1.77,{3.2-1.86})
                        .. controls +(55:0.08) and +(5:0.1) ..  (1.60,{3.2-1.83});

\filldraw[line width=1, fill=gray]
     (1.77,{3.2-1.86}) .. controls +(280:0.05) and +(225:0.05) ..  (1.89,{3.2-1.97})
                        .. controls +(45:0.05) and +(-5:0.05) ..  (1.77,{3.2-1.86});

\fill (0.97,{3.2-0.56}) circle (0.025); \fill (0.5,{3.2-1.05})
circle (0.025); \fill (1.29,{3.2-1.70}) circle (0.025); \fill
(1.26,{3.2-2.33}) circle (0.025); \fill (1.44,{3.2-2.71}) circle
(0.025); \fill (1.38,{3.2-3.05}) circle (0.025); \fill
(1.73,{3.2-1.27}) circle (0.025); \fill (1.99,{3.2-1.0}) circle
(0.025); \fill (2.26,{3.2-0.74}) circle (0.025); \fill
(1.81,{3.2-0.5}) circle (0.025); \fill (2.23,{3.2-1.43})circle
(0.025); \fill (1.11,{3.2-0.22}) circle (0.025); \fill
(0.15,{3.2-1.33}) circle (0.025);

\fill (1.60,{3.2-1.83}) circle (0.025); \fill (1.77,{3.2-1.86})
circle (0.025); \fill (1.89,{3.2-1.97}) circle (0.025); \fill
(1.94,{3.2-1.99}) circle (0.01); \fill (1.96,{3.2-2.01})circle
(0.01); \fill (2.00,{3.2-2.02}) circle (0.01);

%%%FIGURA 2

 \filldraw[line width=1, fill=gray]
     (4.34,{3.2-1.34}).. controls +(225:0.3) and +(135:0.3) ..  (4.75,{3.2-2.26})
                        .. controls +(315:0.3) and +(225:0.3) ..  (5.66,{3.2-2.45})
                        .. controls +(45:0.3) and +(-45:0.3) ..  (5.13,{3.2-1.51})
                        .. controls +(135:0.3) and +(45:0.3) ..  (4.34,{3.2-1.34});

 \filldraw[line width=1, fill=gray]
     (4.34,{3.2-1.34}).. controls +(75:0.3) and +(0:0.15) ..  (4.3,{3.2-0.76})
                        .. controls +(180:0.15) and +(105:0.3) ..  (4.34,{3.2-1.34});

\draw[line width=1] (4.3,{3.2-0.76}) -- (4.53,{3.2-0.41});
\draw[line width=1] (3.9,{3.2-1.66}) -- (4.34,{3.2-1.34});
\draw[line width=1] (3.93,{3.2-1.10}) -- (4.34,{3.2-1.34});

 \filldraw[line width=1, fill=gray]
     (5.13,{3.2-1.51}).. controls +(-10:0.15) and +(270:0.05) ..  (5.43,{3.2-1.57})
                        .. controls +(90:0.05) and +(0:0.15) ..  (5.13,{3.2-1.51});

 \draw[line width=1]
     (5.13,{3.2-1.51}).. controls +(75:0.3) and +(0:0.15) ..  (5.03,{3.2-0.97});

 \draw[line width=1]
     (5.13,{3.2-1.51}).. controls +(75:0.3) and +(0:0.06) ..  (5.67,{3.2-1.30});

 \filldraw[line width=1, fill=gray]
     (4.59,{3.2-2.59}).. controls +(305:0.15) and +(135:0.05) ..  (4.64,{3.2-2.78})
                        .. controls +(315:0.05) and +(-10:0.15) ..  (4.59,{3.2-2.59});

 \filldraw[line width=1, fill=gray]
     (4.75,{3.2-2.26}).. controls +(215:0.05) and +(45:0.05) ..  (4.53,{3.2-2.4})
                        .. controls +(225:0.05) and +(225:0.05) ..  (4.59,{3.2-2.59})
                        .. controls +(45:0.05) and +(240:0.05) ..  (4.75,{3.2-2.26});

 \draw[line width=1]
     (4.34,{3.2-2.25}).. controls +(0:0.3) and +(180:0.15) ..  (4.53,{3.2-2.40});

 \draw[line width=1]
     (4.43,{3.2-3.01}).. controls +(45:0.3) and +(225:0.06) ..  (4.64,{3.2-2.78});

 \draw[line width=1]
     (6.05,{3.2-2.24}).. controls +(225:0.3) and +(20:0.1) ..  (5.66,{3.2-2.45});

 \filldraw[line width=1, fill=gray]
     (5.66,{3.2-2.45}).. controls +(-30:0.3) and +(270:0.05) ..  (6.00,{3.2-2.5})
                        .. controls +(90:0.05) and +(-5:0.2) ..  (5.66,{3.2-2.45});

 \filldraw[line width=1, fill=gray]
     (6.00,{3.2-2.5}).. controls +(-25:0.1) and +(270:0.05) ..  (6.26,{3.2-2.46})
                        .. controls +(90:0.05) and +(50:0.1) ..  (6.00,{3.2-2.5});

 \filldraw[line width=1, fill=gray]
     (6.26,{3.2-2.46}).. controls +(-25:0.1) and +(270:0.05) ..  (6.41,{3.2-2.48})
                        .. controls +(90:0.05) and +(50:0.1) ..  (6.26,{3.2-2.46});

  \filldraw[line width=1, fill=gray]
     (6.41,{3.2-2.48}).. controls +(-25:0.1) and +(270:0.05) ..  (6.52,{3.2-2.36})
                        .. controls +(90:0.05) and +(15:0.1) ..  (6.41,{3.2-2.48});

\fill (6.05,{3.2-2.24}) circle (0.025); \fill (6.00,{3.2-2.5})
circle (0.025); \fill (6.26,{3.2-2.46}) circle (0.025); \fill
(6.41,{3.2-2.48}) circle (0.025); \fill (6.52,{3.2-2.36}) circle
(0.025);

\fill (6.57,{3.2-2.36}) circle (0.01); \fill (6.60,{3.2-2.36})
circle (0.01); \fill (6.63,{3.2-2.36}) circle (0.01);

\fill (4.34,{3.2-1.34}) circle (0.025); \fill (4.75,{3.2-2.26})
circle (0.025); \fill (5.66,{3.2-2.45})circle (0.025); \fill
(5.13,{3.2-1.51}) circle (0.025);

\fill (3.9,{3.2-1.66}) circle (0.025); \fill (3.93,{3.2-1.10})
circle (0.025); \fill (4.3,{3.2-0.76}) circle (0.025); \fill
(4.53,{3.2-0.41}) circle (0.025);

\fill (5.43,{3.2-1.57}) circle (0.025); \fill (5.67,{3.2-1.30})
circle (0.025); \fill (5.03,{3.2-0.97}) circle (0.025);

\fill (4.53,{3.2-2.40}) circle (0.025); \fill (4.34,{3.2-2.25})
circle (0.025); \fill (4.59,{3.2-2.59}) circle (0.025); \fill
(4.64,{3.2-2.78}) circle (0.025); \fill (4.43,{3.2-3.01}) circle
(0.025);

\node[scale=0.6] at (4.88,{3.2-2.24}) {\textbf{b}};
\node[scale=0.6] at (4.44,{3.2-1.4}) {\textbf{r}};
\node[scale=0.6] at (5.08,{3.2-1.61}) {\textbf{b}};
\node[scale=0.6] at (5.58,{3.2-2.41}) {\textbf{f}};

\end{tikzpicture}

\caption{\label{figura3} In the positive (counterclockwise) sense:
a node with a flexible (F) leaf, a rigid (R) sprig, a bland (B)
leaf and a rigid leaf (left), and a leaf with flexible (f), bland
(b), rigid (r) and bland nodes (right).}
\end{figure}

We say that a node $v\in P_A^*$ and a piece $E\in \mathcal{E}_A^*$
are \emph{linked} if $v\in \Bd E$. A sequence
$(\gamma_n)_{n=n_0}^{n_1}$, $-\infty\leq n_0<n_1\leq \infty$, of
pairwise distinct elements from $P_A^*$ and $\mathcal{E}_A^*$ is
called a \emph{chain} if, for any $n_0\leq n<n_1$, the pair
$\gamma_n,\gamma_{n+1}$ is linked. Hence, a chain can be finite,
infinite at one side, or infinite at both sides, and nodes and
pieces (its \emph{links}) alternate at it.

Let $\mathbf{F}=(\mathbf{f}_n)_{n=n_0}^{n_1}$ be a sequence of
orientations for the elements of a chain
$\Gamma=(\gamma_n)_{n=n_0}^{n_1}$. We say that $\Gamma$ is
\emph{oriented by} $\mathcal{F}$ if $\gamma_{n_0+1}\in
\mathbf{f}_{n_0}$, $\gamma_{n_1-1}\in \mathbf{f}_{n_1}$ (when
$n_0$ and/or $n_1$ are finite), and $\gamma_{n-1}$ and
$\gamma_{n+1}$ belong to and are opposed for the orientation
$\mathbf{f}_n$ for any $n_0<n<n_1$. Note that the last condition
only makes sense when $\Gamma$ has at least three links.
Therefore, a two-links chain is oriented if just
$\gamma_{n_1}=\gamma_{n_0+1}\in \mathbf{f}_{n_0}$ and
$\gamma_{n_0}=\gamma_{n_1-1}\in \mathbf{f}_{n_1}$.

Assume that $\Gamma$ is oriented by $\mathbf{F}$. We say that
$\Gamma$ is \emph{complete} if either $n_1=\infty$ (respectively,
$n_0=-\infty$), or $\gamma_{n_1}$ (respectively, $\gamma_{n_0}$)
is a node and the opposed piece to $\gamma_{n_1-1}$ (respectively,
$\gamma_{n_0+1}$) for $\mathbf{f}_{n_1}$ (respectively,
$\mathbf{f}_{n_0}$) is a sprig $L$ whose other endpoint does not
belong to $P_A$ (such a sprig $L$ is called a \emph{stub} of the
complete oriented chain $\Gamma$).  Observe that, by definition, a
complete oriented chain has at least three links.

\begin{lemma}
 \label{linked}
Let $A$ be a very simple shrub. Let $\gamma$ belong to $P_A^*$ or
$\mathcal{E}_A^*$, let $\mathbf{f}$ be an orientation for
$\gamma$, and take $\gamma'\in \mathbf{f}$. If $\gamma$ is a node
and $\gamma'$ is a sprig, assume additionally that the other
endpoint of $\gamma'$ belongs to $P_A$. Then $\gamma'$ belongs to
$\mathcal{E}_A^*$ or $P_A^*$, according to the case, and there is
an orientation $\mathbf{f}'$ for $\gamma'$ containing $\gamma$. In
other words, $(\gamma,\gamma')$ is oriented by
$(\mathbf{f},\mathbf{f'})$.
\end{lemma}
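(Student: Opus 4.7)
The argument splits into cases determined by the types of $\gamma$ and $\gamma'$. In each case I must show (i) that $\gamma'$ admits an orientation, so $\gamma'\in\mathcal{E}_A^*\cup P_A^*$, and (ii) that such an orientation can be chosen to contain $\gamma$.

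The sprig cases follow directly from the definitions. If $\gamma=p$ is a node and $\gamma'=L\in\mathbf{f}$ is a sprig, the extra hypothesis gives $v_1,v_2\in P_A$ for the endpoints of $L$; thus $\{v_1,v_2\}$ is the unique orientation of $L$ and it contains $p$. Conversely, if $\gamma=L$ is a sprig and $\gamma'=q\in\mathbf{f}$, then $\mathbf{f}=\{v_1,v_2\}$, so $q$ is an endpoint of $L$; since $L$ is a sprig incident to $q$ it is rigid with respect to $q$, whence $q$ is orientable and $L$ belongs to every orientation of $q$.

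In the leaf case $\gamma=p\in P_A^*$, $\gamma'=E\in\mathbf{f}$ with $E$ a leaf, I first show that $p$ is non-bland with respect to $E$. Let $R,F$ denote the numbers of rigid and flexible pieces at $p$, so $\mathbf{f}$ consists of all $R$ rigid pieces together with some subset of $k\leq F$ flexible ones, and $R+k$ is even. If $E$ is rigid w.r.t.\ $p$ and $F=0$, then $k=0$, so $R$ is even and the count $R-1$ of rigid pieces $\neq E$ at $p$ is odd, making $p$ rigid w.r.t.\ $E$. If $E$ is rigid w.r.t.\ $p$ and $F\geq 1$, a flexible piece $\neq E$ exists at $p$ and $p$ is flexible w.r.t.\ $E$. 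If $E$ is flexible w.r.t.\ $p$ then $k\geq 1$; the subcase $F=1$ forces $k=1$ and $R$ odd, so $p$ is rigid w.r.t.\ $E$, while $F\geq 2$ produces a flexible piece $\neq E$ at $p$, so $p$ is flexible w.r.t.\ $E$.

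Knowing that $p$ is non-bland with respect to $E$, I construct $\mathbf{f}'$. If $p$ is rigid w.r.t.\ $E$, every orientation of $E$ automatically contains $p$, and $E$ itself is orientable because it has the rigid node $p$. If $p$ is flexible w.r.t.\ $E$, I take $\mathbf{f}'$ to consist of all rigid nodes of $\Bd E\cap P_A$ together with $p$, appending one further flexible node of $\Bd E$ should the resulting cardinality be odd. The main technical obstacle is to verify that a second flexible node of $\Bd E$ is available precisely when needed; I plan to derive this from a global parity count on the very simple shrub $A$, in the style of the handshake lemma for graphs, where the absence of odd cactuses guarantees that the parities of the rigid nodes on $\Bd E$ and of the flexible pieces at each node match compatibly modulo $2$. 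The symmetric situation $\gamma=E$ a leaf, $\gamma'=q\in\mathbf{f}$ a node is handled by the dual parity analysis, interchanging the roles of nodes and pieces throughout.
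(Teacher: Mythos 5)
Your case analysis showing that $p$ is non-bland with respect to $E$ is correct and matches the paper's (terse) assertion of the same fact. The sprig cases are also essentially fine. However, the core of the lemma — producing the orientation $\mathbf{f}'$ in the leaf case — is not actually proved in your proposal: you write ``I plan to derive this from a global parity count \ldots'', which is a statement of intent, not a proof. The existence of a second flexible node when $R+1$ is odd is exactly the hard point of the lemma, and deferring it leaves a genuine gap. Your earlier assertion that ``$E$ itself is orientable because it has the rigid node $p$'' has the same problem: if $R$ is odd and $F=0$ there is no even-cardinality orientation, and ruling out that configuration again requires the parity input coming from very-simplicity (no odd cactuses) that you never establish.

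There is also a structural difference from the paper. You attempt a \emph{direct construction} of $\mathbf{f}'$ and hence need to produce flexible nodes on demand. The paper instead argues by \emph{contradiction}: if $v$ belongs to no orientation of $E$, then (by the bookkeeping you also carried out) $R$ is even and $v$ is the only flexible node of $E$, and from this the paper deduces that $E$ is bland with respect to $v$, contradicting $E\in\mathbf{f}$. That deduction decomposes naturally into two pieces tied directly to the definitions: ``$E$ flexible w.r.t.\ $v$'' forces, by a pigeonhole argument on the infinite connected union of leaves, a second flexible node of $E$; and ``$E$ rigid w.r.t.\ $v$'' forces, by comparing the arc count of the odd cactus in $B_E$ with the (even) arc count of the corresponding maximal cactus in the very simple shrub $A$, either another flexible node or an odd number of rigid nodes. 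Your handshake-style global count is aimed at the same target, but without carrying it out, and without the reduction to ``$E$ bland w.r.t.\ $v$,'' the proof does not close. To fix your argument you would need to actually prove the implication you postpone — most easily by adopting the paper's contrapositive formulation and verifying the two bullet points above.
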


\begin{proof}
Assume that $v=\gamma$ is a node. Then $E=\gamma'$ is a flexible
or rigid piece with respect to $v$. Also, observe that $v$ is
flexible or rigid with respect to $E$. If $E$ is a sprig, then, by
hypothesis, its other endpoint $w$ belongs to $P_A$, $E$ belongs
to $\mathcal{E}_A^*$ and $\mathbf{f}'=\{v,w\}$ does the work. So
we can assume that $E$ is a leaf. If $E\notin \mathcal{E}_A^*$,
then $v$ must be flexible and the other nodes in $E$ must be bland
with respect to $E$, which implies the contradiction that $E$ is
bland with respect to $v$. Hence $E\in \mathcal{E}_A^*$. Now, the
only way to prevent that $v$ belongs to an orientation for $E$ is
that there is an even number of rigid nodes  and $v$ is the only
flexible node with respect to $E$. But this, again, implies the
contradiction that $E$ is bland with respect to $v$.

If $E=\gamma$ is a piece, then $v=\gamma'$ is flexible o rigid
with respect to $E$, $E$ is flexible or rigid with respect to $v$
and we can reason similarly as in the previous paragraph.
\end{proof}

The next result follows immediately from Lemma~\ref{linked}.

\begin{lemma}
 \label{finished}
Let $A$ be a very simple shrub. Let $\gamma$ belong to $P_A^*$ or
$\mathcal{E}_A^*$, let $\mathbf{f}$ be an orientation for $\gamma$
and assume that $\gamma'$ is opposed to $\gamma''$ for
$\mathbf{f}$. Then one of the following alternatives hold:
 \begin{itemize}
 \item[(i)] $\gamma$ is a node, and both $\gamma'$ and
$\gamma''$ are sprigs whose other endpoints do not belong to
$P_A$;
  \item[(ii)] $\gamma$ is a node,  $\gamma''$ (but not $\gamma'$) is
a sprig whose other endpoint do not belong to $P_A$,  and there is
a complete oriented chain $\Gamma=(\gamma_n)_{n=0}^{n_1}$ such
that $\gamma_0=\gamma$, $\mathbf{f}_0=\mathbf{f}$ is the
corresponding orientation for $\gamma$, $\gamma_1=\gamma'$, and
$\gamma''$ is a stub of $\Gamma$;
 \item[(iii)] there is a complete oriented
 chain $(\gamma_n)_{n=n_0}^{n_1}$, with $n_0<0<n_1$, such that
$\gamma_0=\gamma$, $\mathbf{f}_0=\mathbf{f}$ is  the corresponding
orientation for $\gamma$, $\gamma_{-1}=\gamma''$, and
$\gamma_1=\gamma'$.
\end{itemize}
\end{lemma}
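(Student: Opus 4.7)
The strategy is to obtain the desired chain by iterating Lemma~\ref{linked} in both directions starting from the prescribed data $\gamma_0=\gamma$, $\mathbf{f}_0=\mathbf{f}$, $\gamma_1=\gamma'$, $\gamma_{-1}=\gamma''$. Once an oriented segment $(\gamma_{n_0},\ldots,\gamma_{n_1})$ with orientations $\mathbf{f}_{n_0},\ldots,\mathbf{f}_{n_1}$ has been constructed, I attempt to extend it forward by declaring $\gamma_{n_1+1}$ to be the element of $\mathbf{f}_{n_1}$ opposed to $\gamma_{n_1-1}$, and invoking Lemma~\ref{linked} with data $(\gamma_{n_1},\mathbf{f}_{n_1},\gamma_{n_1+1})$; this furnishes an orientation $\mathbf{f}_{n_1+1}$ for $\gamma_{n_1+1}$ containing $\gamma_{n_1}$, so the enlarged segment remains oriented. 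A symmetric rule extends the chain backwards. The only obstruction to the hypothesis of Lemma~\ref{linked} is that $\gamma_{n_1}$ be a node and $\gamma_{n_1+1}$ be a sprig whose other endpoint lies outside $P_A$; thus, at each step either the extension succeeds or the chain terminates at $\gamma_{n_1}$ with $\gamma_{n_1+1}$ as a stub. If the process never halts, one formally sets $n_1=\infty$ (or $n_0=-\infty$), and in either event the chain is complete on that side by definition.

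The trichotomy (i)--(iii) then drops out of what happens at the first extension attempt on each side. If $\gamma$ is a piece, $\gamma'$ and $\gamma''$ are both nodes and Lemma~\ref{linked} applies unconditionally to initialize both directions; iterating yields a complete oriented chain indexed on some interval $n_0<0<n_1$ with $\gamma_{-1}=\gamma''$ and $\gamma_1=\gamma'$, which is case (iii). If $\gamma$ is a node, three possibilities arise according to whether $\gamma'$ and $\gamma''$ are sprigs whose outer endpoints escape $P_A$: if both are, neither side extends and we are in case (i); if only $\gamma''$ is (the symmetric case being identical), backward extension is immediately blocked with $\gamma''$ serving as the stub at the left end, while forward extension produces a complete chain indexed on $\{0,1,\ldots,n_1\}$, which is case (ii); and if neither is of that form, both sides extend and we land back in case (iii).

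The one point that does not come for free from Lemma~\ref{linked} alone is verifying that the iteratively generated sequence really is a chain, that is, that its links are pairwise distinct. Suppose, to the contrary, that $\gamma_i=\gamma_j$ for some $i<j$ with $j-i$ minimal ($\geq 2$, since opposed elements in any orientation are distinct). Joining consecutive links $\gamma_k,\gamma_{k+1}$ by arcs through the incident piece produces a closed curve in $A$; minimality, together with the fact that distinct pieces of a shrub intersect only at nodes, turns this curve into a simple closed curve, hence a topological circle inside $A$ visiting at least two distinct pieces. This contradicts the structural property of shrubs recalled after the definition of leaf, namely that every circle in $A$ is contained in a single leaf (a sprig cannot contain a circle, and two distinct leaves share at most one point). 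With distinctness assured, the case analysis above delivers the complete oriented chains prescribed in (i), (ii) and (iii), and the lemma follows.
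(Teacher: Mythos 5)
Your proof is correct and carries out precisely the route the paper intends: the paper states that Lemma~\ref{finished} ``follows immediately from Lemma~\ref{linked},'' and your argument is exactly that iteration in both directions, with the halting condition (node followed by a sprig whose other endpoint escapes $P_A$) matching the definition of a complete oriented chain and the case split by the nature of $\gamma,\gamma',\gamma''$ yielding alternatives (i)--(iii). The one detail you add that the paper leaves implicit --- that the generated links are pairwise distinct, because a repetition would produce a circle in $A$ running through at least two distinct pieces, contradicting the fact that any circle in a shrub lies inside a single leaf --- is a genuine and worthwhile verification, since the definition of chain requires distinctness.
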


When in the statement of the lemma below we speak about a
``complete oriented chain'', we mean a chain that, after using for
its links the orientations from $\mathbf{F}$, becomes oriented and
complete.

\begin{lemma}
 \label{goodchoice}
Let $A$ be a very simple shrub. Then there is a family
$\mathbf{F}$  of  orientations for all nodes from $P_A^*$ and all
pieces from $\mathcal{E}_A^*$ such that, for each sprig $L$ of
$A$, one of the following alternatives hold:
 \begin{itemize}
 \item[(i)] no endpoint of $L$ belongs to $P_A$;
 \item[(ii)] $L$ is a stub of a complete oriented chain;
 \item[(iii)] $L$ is a link of a complete oriented chain.
 \end{itemize}
\end{lemma}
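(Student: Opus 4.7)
The plan is to construct orientations by case analysis on each sprig $L$ of $A$. For sprigs with no endpoint in $P_A$, condition (i) is immediate. For sprigs $L$ with both endpoints in $P_A$, we have $L \in \mathcal{E}_A^*$ and (iii) will follow automatically once any valid family $\mathbf{F}$ is in hand: applying Lemma~\ref{finished} with $\gamma = L$, alternatives (i) and (ii)---which both require $\gamma$ to be a node---are vacuous, so (iii) must hold, yielding a complete oriented chain through $L$ as a link. The crux of the argument is therefore handling sprigs $L$ with exactly one endpoint $v \in P_A$ (sprig-stubs). Such an $L$ is rigid at $v$, hence $L \in \mathbf{f}_v$ for every orientation of $v$. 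By Lemma~\ref{finished}, $L$ will be the stub of a complete oriented chain ending at $v$ provided that the piece opposed to $L$ in $\mathbf{f}_v$ is not itself a sprig-stub, i.e., we avoid case (i) of Lemma~\ref{finished}.

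I would construct $\mathbf{F}$ sequentially. Enumerate the sprig-stubs of $A$ (countably many, by the separability consequences discussed in Section~\ref{secttop}). For each sprig-stub $L$ at its endpoint $v$ in turn, extend a chain as follows: at $v$, assign an orientation $\mathbf{f}_v$ (respecting any previously-fixed constraints) for which the opposed piece to $L$, call it $\gamma_1$, is not a sprig-stub; then by Lemma~\ref{linked} move to $\gamma_1$ and assign its orientation to continue the chain; and so on, in both directions from $L$. The process terminates either when the chain reaches a node with a sprig-stub opposed to the incoming link (giving a complete finite chain with two stubs), or when the chain extends to infinity (giving a one-sided or two-sided infinite complete chain). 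Once an orientation is fixed at some node or piece, later stages honor it.

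The two technical points are local existence and global compatibility of the choices. Local existence requires that at each node $v$ encountered, an orientation pairing $L$ with a non-sprig-stub be realizable by an appropriate selection of flexible pieces to include in $\mathbf{f}_v$; this uses crucially the hypothesis that $A$ is \emph{very simple}, whose absence of odd cactuses imposes a parity condition on rigid pieces at $v$ that makes such a pairing available. Global compatibility follows from the fact that chains cannot form cycles in $A$: because $A$ is simply connected and distinct leaves meet in at most one point, the abstract node--piece incidence structure behaves like a tree (after contracting leaves), so a closed walk would have to backtrack, contradicting the requirement that links of a chain be pairwise distinct. Therefore chains built for different sprig-stubs occupy disjoint opposed-pair slots at each shared node and never conflict. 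Orientations at nodes and pieces not visited by any chain are filled in arbitrarily, subject only to the rigid-containment and even-parity constraints built into the definitions of $P_A^*$ and $\mathcal{E}_A^*$.

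The main obstacle is the combinatorial verification of local existence: at a node $v$ carrying several sprig-stubs among its rigid pieces, one must choose $\mathbf{f}_v$ so that each sprig-stub is opposed to a non-stub element in the cyclic order induced by the planar embedding. Since the opposed pair is determined by the cyclic position, the freedom to include or exclude flexible pieces in $\mathbf{f}_v$ is what permits the repositioning of sprig-stubs away from each other; it is precisely the very simple hypothesis that guarantees enough non-stub slots to absorb all the stubs and preclude the obstruction of case (i) of Lemma~\ref{finished}.
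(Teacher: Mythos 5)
Your sequential-chain-building strategy is similar in spirit to the paper's, which orients $P_A^*\cup\mathcal{E}_A^*$ class by class and propagates orientations via Lemma~\ref{finished}. But there is a genuine gap in your ``local existence'' step. You claim that at each node $v$ carrying a sprig-stub $L$, the hypothesis that $A$ is very simple ``guarantees enough non-stub slots to absorb all the stubs and preclude the obstruction of case (i) of Lemma~\ref{finished}''. This is false. Consider a very simple shrub $A$ that is a $2r$-star ($r\geq 2$) with center $v$ and all branch endpoints being tips: $v\in P_A$ (it is a node of even order, hence not an odd bud), every piece containing $v$ is a sprig whose other endpoint does not lie in $P_A$, all of them are rigid with respect to $v$, and the only possible orientation $\mathbf{f}_v$ therefore consists of all $2r$ sprig-stubs, each opposed to another sprig-stub. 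No ``appropriate selection of flexible pieces'' is available because there are none, and no parity argument rescues you. The paper's proof explicitly singles this out as the case where the $\sim$-equivalence class of $v$ is a singleton (the first paragraph of its case analysis) and handles it separately; your proof never reaches that case because you assume it away.

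A secondary issue is the claim that condition (iii) ``will follow automatically'' for a sprig $L$ with both endpoints in $P_A$ by invoking Lemma~\ref{finished} with $\gamma=L$. Lemma~\ref{finished} produces a complete oriented chain carrying \emph{its own} orientations for the links, built during the proof via Lemma~\ref{linked}; these need not coincide with whatever you have already assigned in $\mathbf{F}$ to the nodes and leaves that chain traverses. What is true --- and what you should argue directly --- is that once $\mathbf{F}$ is fixed, following opposed pairs in $\mathbf{F}$ starting from $L$ yields a sequence of pairwise distinct links (as you correctly observe, cycles are precluded by simple connectedness) which either terminates at sprig-stubs or is infinite, hence a complete $\mathbf{F}$-oriented chain. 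This replaces your appeal to Lemma~\ref{finished} with the observation that was really needed, but it does not repair the local-existence gap above, which is the essential flaw in the proposal.
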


\begin{proof}
Consider the following equivalence relation in $P_A^*\cup
\mathcal{E}_A^*$: $\gamma\sim \gamma'$ if and only if
$\gamma=\gamma'$ or there is a chain containing both $\gamma$ and
$\gamma'$. We just need to explain how to orientate all members
from a specific equivalence class, as the method can be
independently applied to each class. The construction clearly
implies, as we will see, that the conditions in the statement of
the lemma are satisfied.

If a given class consists of just one member, then, in view of
Lemma~\ref{finished}, this unique member $v$ is a node and there
are $2r$ pieces linked to it, all of which are sprigs whose other
points do not belong to $P_A$. Then, we choose for $v$ the only
possible orientation: that consisting of these $2r$ sprigs.

Assume now that our equivalence class contains at least two
elements $\gamma$ and $\gamma''$, and connect them via a chain
$(\gamma_n)_{n=0}^{n_1}$, $\gamma_0=\gamma$,
$\gamma_{n_1}=\gamma''$. To orientate the elements of this chain
(and possibly some other members from this equivalence class) we
proceed as follows. Let $\mathbf{f}$ be an arbitrary orientation
for $\gamma$, take $\gamma'\in \mathbf{f}$ in $P_A^*\cup
\mathcal{E}_A^*$ and apply Lemma~\ref{finished} to obtain a
complete oriented chain: the corresponding orientations of the
links of this chain, in particular of $\gamma$, are those we use
in $\mathbf{F}$. Say that the last link of
$(\gamma_n)_{n=0}^{n_1}$ in this complete chain is $\gamma_{n_0}$,
$0\leq n_0\leq n_1$. If $n_0=n_1$, then we have already finished.
Otherwise we orientate $\gamma_{n_0+1}$, and possibly some other
members of the equivalence class, as follows:

(a) If  $\gamma_{n_0+1}$ belongs to the orientation $\mathbf{f}'$
of  $\gamma_{n_0}$, then we apply again Lemma~\ref{finished} (with
$\gamma_{n_0}$, $\mathbf{f}'$ and $\gamma_{n_0+1}$ playing the
role  of $\gamma$, $\mathbf{f}$ and $\gamma'$ there), to get a new
complete oriented chain, and use the corresponding orientations
for all the links of the chain, in particular for all $\gamma_n$
for some $n_0+1\leq n_0'$ and $n_0+1\leq n\leq n_0'$.

(b) If $\gamma_{n_0+1}$ does not belong to $\mathbf{f}'$, then let
$(\alpha_m)_{m=0}^{m_1}$, $\alpha_0=\gamma_{n_0}$,
$\alpha_1=\gamma_{n_0+1}$, $m_1\leq \infty$, be a maximal chain
with the property that each possible orientation for $\alpha_m$
contains $\alpha_{m-1}$, $1\leq m\leq m_1$. (Here we allow the
degenerate case $m_1=0$,  meaning that there is an orientation for
$\gamma_{n_0+1}$ not containing $\gamma_{n_0}$.) Realize that, in
fact, there is just one possible orientation
$\mathbf{g}_m=\{\alpha_{m-1},\alpha_{m+1}\}$ for any $1\leq m\leq
m_1$, when $\alpha_{m_1+1}$ has the property, in the finite case,
that it admits an orientation $\mathbf{g}_{m_1+1}$ not containing
$\alpha_{m_1}$. Observe that none of the links $\alpha_m$ can be a
sprig, as this would imply that $\gamma_{n_0+1}$ is rigid with
respect to $\mathbf{f}'$, and then $\gamma_{n_0+1}\in
\mathbf{f}'$. We add all these orientations $\mathbf{g}_m$
(including $m=m_1+1$ in the finite case) to $\mathbf{F}$, in
particular orienting all $\gamma_n$ for some $n_0+1\leq n_0'$ and
$n_0+1\leq n\leq n_0'$.

If $n_0'=n_1$, then we stop; otherwise, we keep applying (a) or
(b) as before to orientate all links $\gamma_n$, and possibly some
other members (maybe all) of the equivalence class. If some
$\beta''$ from the class remains to be oriented, we connect it to
an already oriented member $\beta$ via a chain
$(\beta_l)_{l=0}^{l_1}$, $\beta_0=\beta$, $\beta_{l_1}=\beta''$,
with all the links of this chain to be oriented except $\beta$,
and proceed as previously explained applying (a) or (b) according
to the case.
\end{proof}

We are ready to prove the main result of this subsection. Note
that it already implies Theorem~\ref{teoB}, via
Theorem~\ref{analytic-1} and Proposition~\ref{realizing}, whenever
the shrub $A$ is simple.

\begin{proposition}
\label{finiterealizable} Let $A$  be a simple shrub and let $T$
contain all odd buds of $A$ and exactly one point from every odd
cactus of $A$. Then there exist a homeomorphism $h:\EE \to \EE$
and an analytic map $F: \EE\setminus h(T) \to \R$ whose set of
zeros is contained in $h(A\setminus T)$ and contains $h((\Bd
A)\setminus T)$.
\end{proposition}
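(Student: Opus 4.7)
The plan is to construct, directly in $\EE$, a ``canonical'' shrub $A'$ homeomorphic to $A$ whose boundary is a locally finite union of segments and hypocycloid boundaries, and then glue these analytic sets into a single function $F$ via Theorem~\ref{bruhat}.

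First I would reduce to the very simple case by exploiting the $T$-points inside odd cactuses: removing the chosen point from each odd cactus breaks it, and the remaining components---together with the rest of $A$---can be organized as very simple subshrubs. Apply Lemma~\ref{goodchoice} to obtain a family $\mathbf{F}$ of orientations of all members of $P_A^*\cup\mathcal{E}_A^*$ under which each sprig of $A$ falls into one of the three alternatives of that lemma (free between two $T$-points, a stub, or an interior link of a complete oriented chain). Choose a skeleton of stems $(S_k)_{k=1}^\infty$ of $A$ as described in Section~\ref{secttop}.

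Then, for each $S_k$, apply Lemma~\ref{hypo-segment}: the set $Q$ consists of those nodes along $S_k$ at which $S_k$ ``turns'' (the adjacent pieces of $A$ in $S_k$ are not opposed for $\mathbf{F}$) or at which another stem of the skeleton branches off, and $\mathcal{D}$ collects the leaves of $A$ contained in $S_k$. The biases $\phi$ are read off from $\mathbf{F}$ so that along any complete oriented chain from Lemma~\ref{goodchoice} the opposed pieces are aligned---the whole chain becomes a single straight segment threading through a row of aligned hypocycloids terminating at $T$-points---while branchings bend as the orientation prescribes. The resulting homeomorphisms $h_k\colon S_1\cup\cdots\cup S_k\to S_1'\cup\cdots\cup S_k'$ agree on overlaps because $\mathbf{F}$ assigns a single orientation to each shared node; each $h_k$ preserves the geometric configuration by design and therefore extends to a sphere homeomorphism via Theorem~\ref{exthomeo}. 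Remark~\ref{skeleton0} supplies circles $C_k$ enclosing the shrinking $A'_k$, so Proposition~\ref{skeleton1} makes $(S_k')$ extensible to a shrub $A'$, and Proposition~\ref{skeleton2} gives the limit homeomorphism $h\colon\EE\to\EE$ with $h(A)=A'$.

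For the analytic function, observe that $h((\Bd A)\setminus T)=(\Bd A')\setminus h(T)$ is the union of all hypocycloid boundaries of leaves of $A'$ together with all segments that make up the sprigs of $A'$, minus the $T$-endpoints. By Lemmas~\ref{analytic-segment} and \ref{analytic-hypo} each member of this family is analytic in $\EE\setminus h(T)$, and the family is locally finite there: accumulations of leaves or sprigs of a shrub can only occur at odd buds, every one of which lies in $T$. Theorem~\ref{bruhat} therefore delivers a single analytic map $F\colon\EE\setminus h(T)\to\R$ whose zero set is exactly this union, hence contained in $h(A\setminus T)$ and containing $h((\Bd A)\setminus T)$, as required. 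The main obstacle is the consistency of the biases at nodes shared by several stems of the skeleton; this is precisely what the notion of complete oriented chain and Lemma~\ref{goodchoice} are engineered to resolve.
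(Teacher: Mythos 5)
Your middle block --- the use of Lemma~\ref{goodchoice} to pick orientations, Lemma~\ref{hypo-segment} stem by stem with $Q,\mathcal{D},\phi$ read off from $\mathbf{F}$ so that complete oriented chains straighten into single segments, and Propositions~\ref{skeleton1} and \ref{skeleton2} (with Remark~\ref{skeleton0}) to pass to the limit --- matches the paper's argument for the \emph{very simple} case quite faithfully. However, there are two genuine gaps at the ends of your plan, plus a missing case.

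\textbf{The reduction from simple to very simple is wrong.} You claim that ``removing the chosen point from each odd cactus breaks it, and the remaining components can be organized as very simple subshrubs.'' This does not work: the chosen $T$-point in an odd cactus may be an interior point or an exterior point of a leaf, and removing such a point disconnects nothing. Even when it is a node, the odd-parity obstruction that makes a cactus odd is not eliminated by deletion. The paper goes the other way round: it \emph{adds} a small arc $B_p$ at each chosen point $p$ (attached at $p$ itself when $p$ is a node or exterior point, and at a nearby exterior point when $p$ is interior). Adding one sprig flips the parity of the prickly cactus, so $A\cup\bigcup_p B_p$ \emph{is} a very simple shrub, the very simple construction applies, and at the end one truncates the segments corresponding to the added arcs at $g(T_0)$; analyticity is lost exactly at the new segment endpoints, which is precisely where it may be lost. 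Your one-sentence reduction is not a streamlining of this; it is an incorrect step, and also one that would leave $P_A^*$ and $\mathcal{E}_A^*$ undefined (they are defined only for very simple shrubs).

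\textbf{The analyticity claim near $\infty$ is unjustified.} You conclude that ``by Lemmas~\ref{analytic-segment} and \ref{analytic-hypo} each member of this family is analytic in $\EE\setminus h(T)$,'' but Lemma~\ref{analytic-hypo} only gives analyticity of the hypocycloids \emph{in $\RR$}, and Remark~\ref{algebraic} explicitly warns that an algebraic set in $\RR$ need not be analytic in $\R_\infty^2$: the analytic extension across $p_N$ costs you an extra zero at $p_N$. The paper concedes that the locally finite union is analytic only in $\RR\setminus T'$ and fixes the issue by a change of variables $f(x,y)=(\tan x,\tan y)$, after arranging that the starting odd bud $w'$ sits at $(-\pi/2,0)$ and $A'\setminus\{w'\}\subset(-\pi/2,\pi/2)^2$; this sends $w'$ to $\infty$, so the single point where analyticity might fail is an odd bud and therefore in $T$. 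Your proposal ignores this step entirely, and without it the zero set you construct need not be the zero set of an analytic function on all of $\EE\setminus h(T)$.

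Finally, the very simple case with \emph{no} odd buds (so $A$ is a singleton or a cactus) has to be treated separately: there is no odd bud $w$ to anchor the skeleton or to send to $\infty$, and the paper instead places one leaf as $\{|z|\ge 1\}\cup\{\infty\}$ so that $\infty$ is an interior point of a leaf and the resulting extra zero is harmless. Your write-up does not address this branch.
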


\begin{proof}

Firstly, we assume that $A$ is very simple and, after the
identification $\EE\cong\R_\infty^2$ and without loss of
generality, that $A\subset \RR$. If $A$ has no odd buds, then $A$
is either a singleton, and the proposition is trivial, or a cactus
(this is a simple consequence of \cite[Theorem~7.1, p.
64]{HARARY}). In the latter case, it is easy to construct a cactus
$A'\subset \R_\infty^2$, compatible with $A$ (in the sense of
Theorem~\ref{exthomeo}), with one of its leaves being the disk
$\{z\in \RR:|z|\geq 1\}\cup\{\infty\}$, all other leaves being
hypocycloids included in the unit disk. Then there is an analytic
map in $\R_\infty^2$ whose set of zeros is $\Bd A'\cup\{\infty\}$
(Lemma~\ref{analytic-hypo} and Remark~\ref{algebraic}), and the
proposition follows.

Therefore, we assume in what follows that $A$ has some odd bud
$w$.  Find $\mathbf{F}$ as in Lemma~\ref{goodchoice} and fix a
skeleton  of $A$ which we assume (this is the most difficult case)
to be infinite: call it $\Psi=(S_k)_{k=1}^\infty$. It is not
restrictive to suppose that $w$ is one of the endpoints of $S_1$.
It is not difficult to construct an skeleton
$\Psi'=(S_k')_{k=1}^\infty$, with each thick arc $S_k'$ generated
from the corresponding $S_k$ similarly as $A'$ is generated from
$A$ in Lemma~\ref{hypo-segment} (and then linearly compressed as
explained in Remark~\ref{rem-hypo-segment} to resemble ``almost
segments'', and appropriately translated and rotated), so that, on
the one hand, $\Psi'$ is extensible, hence the closure $A'$ of the
union set of the thick arcs $S_k'$ is a shrub (use
Proposition~\ref{skeleton1} for this, taking also
Remark~\ref{skeleton0} into account), and, on the other hand,
$\Psi$ and $\Psi'$ are compatible, hence $A$ and $A'$ are
compatible as well (Proposition~\ref{skeleton2}). Thus, in
particular, all leaves of $A'$ are hypocycloids, and all sprigs of
$A'$ are segments. This can be done regardless the sets of nodes,
the families of disks and the assignations, call them $Q_k$,
$\mathcal{D}_k$ and $\phi_k$, we use when applying
Lemma~\ref{hypo-segment} to the stems $S_k$, and additionally
assuming that all non-cusp points of the leaves of $A'$  are
exterior points of $A'$. Now  we specify $Q_k$, $\mathcal{D}_k$
and $\phi_k$ as follows. The points $Q_k$ (respectively, the disks
$\mathcal{D}_k$) are among those in $S_k$ belonging to,
respectively, $P_A^*$ and $\mathcal{E}_A^*$. More precisely, a
node $v$ belongs to $Q_k$ if the number of pieces of
$\mathbf{f}_v$ from $E$ to $E'$ (not counting them), in the
positive sense, is different from the number of pieces in the
negative sense: here $\mathbf{f}_v$ is the orientation in
$\mathbf{F}$ corresponding to $v$, and $E$ and $E'$ are the pieces
in $S_k$ linked to $v$, $E$ being closer to $w$ than $E'$. Note
that $E$ and/or $E'$ may belong, or not, to $\mathbf{f}_v$. If the
first number is larger (respectively, smaller) than the second
one, then we put $\phi_k(v)=1$ (respectively, $\phi_k(v)=-1$).
Analogously, a leaf $D$ belongs to $\mathcal{D}_k$ if the number
of nodes of $\mathbf{f}_D$ (the orientation in $\mathbf{F}$
corresponding to $D$) from $v$ to $v'$ (the nodes in $S_k$ linked
to $D$), in the positive sense, is different from the number of
nodes in the negative sense, and write $\phi_k(D)=\pm 1$ according
to the case.

Next revise, if necessary, the construction of $\Psi'$ to ensure:
(a) opposed nodes for any $\mathbf{f}_D$ go, via the compatibility
homeomorphism, to opposed cusps in the corresponding hypocycloid
of $A'$ (the second paragraph of Remark~\ref{rem-hypo-segment} is
useful in this regard); (b) opposed pieces for any $\mathbf{f}_v$
go to aligned segments and/or hypocycloids in $A'$. Clearly, this
is possible due to the way the assignations $\phi_k$ have been
made. Consider the family of segments consisting of all sprigs of
$A'$ and the diameters of the hypocycloids in $\mathcal{E}_{A'}^*$
whose endpoints, when homeomorphically seen in $A$, are opposed
for some orientation $\mathbf{f}_D$ in $\mathbf{F}$. Note that
some segments from this family may be aligned, and then pasted
into larger segments. We do so (and take closures in the case when
we are pasting infinitely many segments) to receive a family of
maximal segments $\mathcal{S}'$ of $A'$. The properties of
$\mathbf{F}$ guarantee that the endpoints of all segments from
$\mathcal{S}'$ are odd buds.

We are almost done (in the very simple case): if $T'$ denotes the
set of odd buds of $A'$, then Lemmas~\ref{analytic-segment} and
\ref{analytic-hypo}, together with Theorem~\ref{bruhat}, imply
that $\Bd A'$ is analytic in $\RR\setminus T'$ (although maybe not
in $\R_\infty^2\setminus T'$). With a little extra care, we can
get $w'$ (the corresponding point to $w$ in $A'$) to be the point
$(-\pi/2,0)$, and  moreover $A'\setminus \{w'\}\subset
(-\pi/2,\pi/2)^2$. Let $f(x,y)=(\tan(x),\tan(y))$. Clearly $A'$
(and then $A$) and $A''=\{\infty\}\cup f(A'\setminus \{w'\})$ are
compatible, and $\Bd A''$ is analytic in $\R_\infty^2\setminus
T''$, $T''$ being the set of odd buds of $A''$. From this,
Proposition~\ref{finiterealizable} follows.

If $B$ (which we assume again to be a plane set) is simple, but
not very simple, and $T_0\subset T$ contains exactly one point
from each odd cactus of $B$, we add, for each $p\in T_0$, an arc
$B_p$ to $B$ which intersect $B$ exactly at $p$ (if $p$ is either
a node or an exterior point of $B$), or at an exterior point
belonging to the same leaf as $p$ (if $p$ is an interior point of
$B$). Certainly we can assume, and so we do, that these arcs are
small enough so that $A=B\cup \bigcup_{p\in T_0} B_p$ is a (very
simple) shrub. Construct a plane homeomorphism $g$ mapping $A$
onto a very simple shrub $A'$ with ``optimal analyticity'', as
previously explained, when there is no loss of generality in
assuming that the interior points from $T_0$ are mapped by $g$ to
the centers of some hypocycloids in $A'$, and realize that
$g(T_0)$ consists of points belonging to (but not being endpoints
of) some segments from the family $\mathcal{S}'$. After cutting
these segments off so that the points $g(T_0)$ become endpoints of
the resultant segments (and then analyticity is lost exactly at
them), we recover $B$ (via $g^{-1}$) and are done.
\end{proof}

\subsection{The general case}
 \label{general-case}

Let $A$ be a shrub and let $D$ be a leaf of $A$. We say that $D$
is \emph{special} if it contains an odd bud of $A$. Also, we say
that an arc $L\subset \Bd D$ is \emph{special} if one of its
endpoints (the \emph{distinguished} endpoint of $L$) is an odd bud
of $A$, there is no other leaf containing the second endpoint of
$L$, and all other points of $L$ are exterior points of $A$. We
say that $A$ is \emph{semi-simple} if any special leaf of $A$
includes some special arc.

Assume now that $D$ is \emph{very special}, which means that it is
special and its boundary contains some star point of odd order of
$\Bd A$.  Then we can associate to each such point $s_m$ an arc
$S_m\subset D$ connecting $s_m$ to a ``close'' odd bud in $\Bd D$.
If this is carefully done (for instance, deforming $D$ to a ball,
using a segment to connect the star point to its closest odd bud
in the circumference, and going back to $D$), different arcs
$S_m,S_l$ intersect at most at its common endpoint (an odd bud),
and, when we delete from  $A$ the interior points of $D$, and add
the arcs $S_m$, the resultant set is a Peano space (although not a
shrub because simply connectedness is lost). We call such a
(countable) family $\{S_m\}_m$ a \emph{web (for $D$)}. If the arcs
$J_m$ are strictly contained in $S_m$ but share with $S_m$ an
endpoint $q_m$ (an odd bud), then we call $\{J_m\}_m$ a
\emph{cutting} of the web $\{S_m\}_m$ with \emph{distinguished}
endpoints $\{q_m\}_m$.

The following lemma is the last ingredient we need to complete the
proof of Theorem~\ref{teoB}. Note that there may be indexes $i$
for which the cutting $\{J_{i,m}'\}_m$ is empty: this means that
$D_i'$ is special, but not very special.

\begin{lemma}
 \label{semi-simple}
Let $A$ be a shrub. Then there are a semi-simple  shrub $A'$ and a
continuous map $g:\EE\to \EE$ such that $g(A')=A$. Moreover, for
every special leaf $D_i'$ of $A'$ there are a special arc
$L_i'\subset \Bd D_i'$ and a cutting $\{J_{i,m}'\}_m$ such that
$g$ is constant on each $L_i'$ and $J_{i,m}'$, and maps
bijectively $\EE$ onto $\EE$ otherwise.
\end{lemma}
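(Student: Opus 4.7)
The plan is to construct $A'$ by performing local surgeries on $A$: at each chosen ``site'' we insert a small arc, and the map $g$ collapses these arcs back to their respective sites. The key enabling tool is Proposition~\ref{equiv}, which guarantees that collapsing a pairwise disjoint countable family of continua in $\EE$ (with connected complements and diameters going to zero) yields a space homeomorphic to $\EE$.

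First I catalogue the surgery sites. For each special leaf $D$ of $A$, fix an odd bud $p_D \in D$; the very definition of \emph{special} guarantees its existence. If $D$ is additionally very special, fix a web $\{S_{D,m}\}_m$ for $D$, and for every $m$ identify the odd-bud endpoint $q_{D,m}\in \Bd D$ of the web arc $S_{D,m}$. Because $\Bd A$ is a Peano space, the diameters of the leaves of $A$ tend to zero, so the resulting collection of sites is countable and accumulates only at buds of $A$.

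Next I perform the surgery. Select pairwise disjoint closed disks $\{B_k\}$ centered at the sites, with diameters tending to zero. Inside a disk $B_k$ centered at a boundary site $p_D$ I ``stretch'' $\Bd D$ near $p_D$ into a small arc $L_D' \subset \Bd D'$ having $p_D$ itself as its distinguished endpoint (still an odd bud of the resulting shrub $A'$) and placing the other endpoint away from any other leaf. Inside a disk $B_k$ centered at a site $q_{D,m}$, I perform the analogous modification inside the leaf: I insert a small arc $J_{D,m}'$ along a web arc $S_{D,m}'$ of $D'$ with $q_{D,m}$ as one endpoint. Let $A' \subset \EE'$ be the result, where $\EE'$ denotes the ambient sphere after surgery. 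Proposition~\ref{equiv}(ii) applies to the pairwise disjoint countable family of arcs $\{L_D'\}\cup\{J_{D,m}'\}$ in $\EE'$: the quotient obtained by collapsing each arc to a point is homeomorphic to $\EE$, so composing the quotient map with this identification (and with a homeomorphism $\EE' \cong \EE$) yields a continuous onto map $g:\EE \to \EE$ which is constant on each $L_D'$ and each $J_{D,m}'$, bijective elsewhere, and satisfies $g(A')=A$.

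It remains to check that $A'$ has the right properties. Local arcwise connectedness is preserved by the local surgery, and the decay of the disk diameters prevents accumulation pathologies, so $A'$ is a Peano continuum; its complement is in bijection (via $g$) with $\EE\setminus A$ outside the identified arcs, and the local modifications happen inside simply-connected disks, so $\EE \setminus A'$ stays connected. Hence $A'$ is a shrub. By construction, each special leaf $D'$ of $A'$ contains the special arc $L_D'$, so $A'$ is semi-simple, and $\{J_{D,m}'\}_m$ furnishes the required cutting whenever $D$ is very special. The delicate point, and the main obstacle, is the choice of $B_D$ when the structure of $A$ near $p_D$ is topologically complicated: other leaves or sprigs may accumulate at $p_D$, so ``a clean portion of $\Bd D$ on which to place $L_D'$'' is not automatic. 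One handles this by choosing $B_D$ small enough that all leaves of $A$ meeting $B_D$ are entirely contained in $B_D$ (feasible because leaf diameters decay) and then inserting $L_D'$ in the room freed by slightly displacing these small leaves before the arc is attached.
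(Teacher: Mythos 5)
Your strategy---perform all the surgeries at once and then appeal to Proposition~\ref{equiv}(ii) to collapse the inserted arcs back down---is conceptually close to the paper's quotient argument, and you have correctly identified the right tool. But the proposal as written has two genuine gaps.

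First, the ``pairwise disjoint closed disks centered at the sites'' cannot exist in general. Distinct special leaves can (and typically do) share the same odd bud $p$; the paper explicitly allows $p_i=p_k$ for $i\neq k$. So several of your arcs $L_D'$ will have a common endpoint, and the family $\{L_D'\}\cup\{J_{D,m}'\}$ is not a disjoint family of continua to which Proposition~\ref{equiv}(ii) can be applied directly. The paper resolves this by organizing the collapsed arcs into pairwise disjoint \emph{dendrites} $C_j'$ (each dendrite being the union of all arcs sharing a given odd bud), and it is these dendrites, not individual arcs, that are collapsed. Your set-up omits this grouping, and without it the quotient is not even defined on a disjoint family.

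Second, the remedy you offer for crowded sites---``inserting $L_D'$ in the room freed by slightly displacing these small leaves''---is incompatible with the requirement $g(A')=A$: if you move leaves of $A$ before attaching the arc, collapsing the arc afterwards recovers the displaced $A$, not $A$ itself. The paper instead works entirely in the complement $R=\EE\setminus A$: it uses Carath\'eodory's theorem to find a sequence of non-disconnecting boundary points of $D$ converging to $p$ and a small arc $Q\subset R$ from $p$ to one of them, then deforms inside the disk enclosed by $Q$ and a piece of $\Bd D$ via an explicit piecewise-affine stretch. This guarantees the stretching happens in a region free of $A$ and is the identity on every other leaf containing $p$---a property that is essential for iterating the construction over all special leaves without interference. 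Your sketch gives no mechanism producing such an arc $Q$ and no way to ensure the stretch avoids the rest of $A$; the ``displace the neighbours'' idea replaces this with something that does not actually close the argument.

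A further structural point worth flagging: the paper does not use Proposition~\ref{equiv} to \emph{build} $g$; it builds $g$ as a uniform limit of explicit maps $g_{1,k}$, carefully controlling nesting of the neighbourhoods $U_k'$ (condition (b): either $U_k'\subset U_i'$ or disjoint), so that the limit is continuous and bijective off the collapsed dendrites. Proving continuity and the precise bijectivity statement in your one-shot quotient construction is not automatic and is exactly the part of the argument that the paper spends the most care on.
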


\begin{proof}

Firstly, we assume that $A$ has no very special leaves. Then the
proof of the lemma is based on the following

\medbreak\noindent\emph{Claim.} Let $A$ be a shrub having a
special leaf $D$ and let $p\in\Bd D$ be an odd bud of $A$. Let $U$
be a neighbourhood of $p$. Then there are a shrub $B$ and a
continuous map $h:\EE\to\EE$ satisfying the following conditions:
\begin{itemize}
\item[(i)] $D$ is a special leaf of $B$ whose boundary
 includes a special arc $L$ having $p$ as its
 distinguished point;
\item[(ii)] $L\subset U$;
 \item[(iii)] $h(B)=A$, $h(D)=D$,
 $h(L)=\{p\}$, $h$ maps bijectively
 $(\EE\setminus L)\cup \{p\}$ onto $\EE$, and $h$ is the
 identity map outside $U$ and on each leaf $D'$
 of $B$ (different from $D$) containing $p$.
\end{itemize}

Let $R=\EE\setminus A$. Recall that $\Bd A$ is a Peano space.
Then, by the Carath\'eodory theorem \cite[Theorem~9.8 and
Lemma~9.8, pp. 279-280]{Pommerenke}, there is a continuous  map
$f:\mathbb{D}^2\to\Cl R$ mapping the interior of the unit ball
$\mathbb{D}^2$ homeomorphically onto $R$, and the unit circle
$\mathbb{S}^1$ onto $\Bd R=\Bd A$; moreover, if $q\in \Bd A$ does
not disconnects $\Bd A$, then it has exactly one preimage under
$f$. There are many such ``non-disconnecting'' points in $\Bd D$:
in fact, except for a countable set of points of $\Bd D$ (those
belonging to some stem intersecting $\Bd D$ exactly at one point;
take also the skeleton structure of $A$ into account), all of them
are of this type. In particular, we can find a sequence
$(q_n)_{n=1}^\infty$ in $\Bd D$ of non-disconnecting points
converging monotonically to $p$ and, with the help of $f$, a
sequence of arcs $Q_n$ with endpoints $q_n$ and $q_{n+1}$ and
included otherwise in $R$, with $\diam(Q_n)\to 0$, so that if $E$
is the disk encircled by $Q_n$ and the arc in $\Bd D$ with
endpoints $q_n$ and $q_{n+1}$ satisfying $E\cap \Inte
D=\emptyset$, then $E$ does not intersect any other leaf
containing $p$. Then, modifying slightly $\bigcup_{n=1}^\infty
Q_n$, we find an arc $Q$, with endpoints $p$ and $q_1$, and
included in $R$ otherwise, so that if $Q'$ is the arc in $\Bd D$
with endpoints $p$ and $q_1$ and containing the other points
$q_n$, and $E'$ is the disk encircled by $Q\cup Q'$ satisfying
$E\cap\Inte D=\emptyset$, then $E'$ does not intersect any other
leaf containing $p$.

Now, after identifying $\EE$ with  $\R_\infty^2$, and applying
Theorem~\ref{exthomeo}, there is no loss of generality in assuming
that $p$ is the origin, $D$ is the rectangle $[0,1]\times [-1,0]$,
$Q$ is the polygonal $\Bd ([0,1]^2)\setminus \{(x,0):0<x<1\}$ and
the square $[0,1]^2$ does not intersect the other leaves of $A$
containing $p$ (except at $p$). Find an arc with endpoints
$(1/2,1)$ and $p$, and included in $(0,1)^2$ otherwise, such that
the region enclosed by it, the horizontal segment $[0,1/2]\times
\{1\}$ and the vertical segment $\{0\}\times [0,1]$ does not
intersects $A$ (except at $p$). Again due to
Theorem~\ref{exthomeo}, we can assume that this arc is in fact the
segment joining $p$ and $(1/2,1)$.

Find $0<\epsilon<1$ such that $[0,\epsilon]\times
[-\epsilon,\epsilon]\subset U$. Let $h$ be defined as the identity
outside this rectangle and mapping affinely the segments
$[0,\epsilon/2]\times \{y\}$ and $[\epsilon/2,\epsilon]\times
\{y\}$ onto $[0,|y|/2]\times \{y\}$ and $[|y|/2,\epsilon]\times
\{y\}$, respectively, for any $y\in [-\epsilon,\epsilon]$. Also,
let $B=h^{-1}(A)$ and $L=\{(x,0):0\leq x\leq \epsilon/2\}$.
Clearly, $B$, $h$ and $L$ satisfy the conditions (i), (ii) and
(iii) and the claim is proved.

\medbreak

Now, to prove the lemma (recall that we are assuming the
non-existence of very special leaves), consider the most difficult
case when $A$ has infinitely many special leaves
$\{D_i\}_{i=1}^\infty$ and fix  odd buds $p_i\in D_i$ (note that
it is possible $p_i=p_k$ for some $i\neq k$). Successive
applications of the claim allow us to find shrubs $A_i$, disks
$D_i'$ and $U_i'$, arcs $L_i'$, points $p_i'$ and continuous maps
$g_i:\EE\to \EE$ (when, if $i_1\leq i_2$, then we denote
$g_{i_1,i_2}=g_{i_1}\circ\cdots \circ g_{i_2}$), such that, for
any $k$, the following is true:
\begin{itemize}
\item[(a)] all disks $\{D_i'\}_{i=1}^k$ are special leaves for
 $A_k$  having the arcs $L_i'$ as corresponding special arcs with
 distinguished endpoints $p_i'$; moreover $D_k'$ and $p_k'$ are
 also, respectively, a special leaf and an odd bud of $A_{k-1}$
 (here we mean $A_0=A$).
\item[(b)] $\diam(U_k')\leq 1/2^k$ and $L_k'\subset U_k'$;
moreover,
 if $i<k$, then either
 $U_k'\subset  U_i'$ or $U_k'\cap U_i'=\emptyset$;
\item[(c)] $g_k(A_k)=A_{k-1}$, $g_k(D_k')=D_k'$,
 $g_k(L_k')=\{p_k'\}$, $g_{1,k-1}(D_k')=D_k$, $g_{1,k-1}(p_k')=p_k$
 (here $g_{1,0}$ denotes the identity map),
 $g_k$ maps bijectively $(\EE\setminus
 L_k')\cup \{p_k'\}$ onto $\EE$, and $g_k$ is the identity map outside $U_k'$
 and on each of the disks $D_i'$, $1\leq i<k$.
\end{itemize}

By (b) and (c) the sequences $(g_{k,i})_{i=k}^\infty$ converge
uniformly to  onto continuous maps $g_k^*$. We next show that
$g=g_1^*$ and $A'=g^{-1}(A)$ (together with the sets $D_i'$ and
$L_i'$) satisfy the requirements of the lemma.

We begin by proving the last statement of the lemma (that
concerning bijectivity).  Clearly, $g_k^*=g_{k,l-1}\circ g_l^*$
for any $k<l$. Write $M'=\bigcup_{i=1}^\infty L_i'$ as a countable
union  of a family of pairwise disjoint sets $\{C_j'\}_j$. By (a)
and (b), they are dendrites and their diameters tend to zero if
there are infinitely many of them; moreover, the arcs $L_i'$ in
each of these dendrites share a unique point  $r_j'$ (we are using
that non-distinguished endpoints of special arcs just belong to
one leaf). By (c), $g$ is constant on each dendrite $C_j'$. Hence,
to prove the statement, we are left to show that if $u\neq v$ and
$g(u)=g(v)$, then there is $C_j'$ such that $u,v\in C_j'$.

Again by (c), $g$ is the identity outside the union of the sets
$U_i'$ (so $u,v \in \bigcup_i U_i'$). Three possibilities arise.
If $u$ and $v$ belongs to the same $U_j'$ for infinitely many $j$,
then (b) implies that $u=v$. If $u,v\in U_k'$  but neither $u$ nor
$v$ belong to some set $U_i'$ included in $U_k'$, then $g_{k+1}^*$
is the identity on $u$ and $v$, and hence $g_{1,k}(u)=g_{1,k}(v)$,
so some $C_j'$ contains both $u$ and $v$. Finally, if none of
these two previous cases hold, there must exist $U_k'$ such that,
say, $u\in U_k'$, $v\in \EE\setminus U_k'$. Furthermore,  (b) and
(c) together imply that $g_k^*$ maps $\EE\setminus U_k'$ onto
itself and $U_k'$ onto itself. Hence, after writing
$u^*=g_k^*(u)$, $v^*=g_k^*(v)$, we conclude that $u^*\neq v^*$ but
$g_{1,k-1}(u^*)=g_{1,k-1}(v^*)$. This is only possible, by (c), if
there are arcs $L_{i_1}',L_{i_2}'$ with $1\leq i_1\leq i_2\leq
k-1$ and $p_{i_1}'=p_{i_2}'$ such that both $u^*$ and $v^*$ belong
to $C=L_{i_1}'\cup L_{i_2}'$. But, due to (c), $g_k$ is the
identity on $C$ and maps $\EE\setminus C$ onto itself, so
$g_k^*(u)=g_k(g_{k+1}^*(u))=u^*$ implies $g_{k+1}^*(u)=u^*$, and
in general we have that $g_i^*(u)=u^*$ for all $i\geq k$. Since,
by (b), the maps $g_i^*$ converge uniformly to the identity, we
obtain $u=u^*$ and similarly $v=v^*$. If $C_j'$ is the dendrite in
$M$ including $C$, we conclude $u,v\in C_j'$ as we desired to
prove.

A consequence of the previous result is that $g^{-1}(u)$ is a
connected set for any $u\in \EE$, which by \cite[Theorem 9, p.
131]{KU} implies that if $C\subset \EE$ is connected, then
$g^{-1}(C)$ (and similarly all sets $(g_k^*)^{-1}(C)$) is
connected as well. Now the rest of statements of the lemma are
easy to prove. For instance, $A'$ is connected, hence a continuum,
locally connected, hence a Peano space (because if $u'\in A'$,
$\epsilon>0$ is given,  $k$ is large enough and $V$ is a connected
neighbourhood of $g_k^*(u')$ in the shrub $A_k$ of diameter less
than $\epsilon$, then $(g_k^*)^{-1}(V)$ is a connected
neighbourhood of $u'$ in $A'$ of diameter less than $2\epsilon$),
and $\EE\setminus A'=g^{-1}(\EE\setminus A)$ is connected, hence
$A$ is a shrub. Also, $g(D_i')=D_i$ and $g(p_i')=p_i$ for any $i$,
which, together with the bijectivity property, guarantees that all
disks $D_i'$ are leaves of $A'$ and all points $p_i'$ are odd buds
of $A'$. In fact, since each leaf of $A'$ is mapped onto a leaf of
$A$, the disks $D_i'$ are precisely the special leaves of $A'$.
Using that $g_{i+1}^*(A')=A_{i+1}$ and $g_{i+1}^*$ is the identity
on $D_i'$, which is a special leaf of $A_{i+1}$ with special arc
$L_i'$, it is simple to show that $L_i'$  is also special for
$D_i'$ in $A'$. In particular, $A'$ is semi-simple. This concludes
the proof the lemma in the case when there are no cuttings.

In the general case, we first generate an intermediate semi-simple
shrub $A'$ and a surjective map mapping $A'$ onto $A$, which we
now call $\tilde{g}$, as just explained. Then we construct webs
for all very special leaves of $A'$ (which are the leaves mapped
by $\tilde{g}$ to the very special leaves of $A$), and apply
similar (but simpler) ideas to those already explained to collapse
small cuttings of these webs to their distinguished endpoints via
a surjective continuous map $\tilde{g}':\EE\to\EE$ which is the
identity outside the very special leaves of $A'$, so
$\tilde{g}'(A')=A'$. Then $g=\tilde{g}'\circ \tilde{g}$ is the map
we are looking for.
\end{proof}

Let $A$ be a shrub and let $T\subset A$ contain all odd buds of
$A$ and one point from every odd cactus of $A$. Find a map $g$ and
a semi-simple shrub $A'$ as in Lemma~\ref{semi-simple}, and then
construct a simple shrub $A''$ by first removing from $A'$ all
special arcs $L_i'$ (except its endpoints) and the interior points
of all special leaves, and then adding the arcs
$\Cl(S_{i,m}'\setminus J_{i,m}')$, with $\{S_{i,m}'\}_m$ being the
webs from which we have obtained the cuttings $\{J_{i,m}'\}_m$.
Let $T''=g^{-1}(T)\cap A''$. Clearly, $T''$ contains all odd buds
of $A''$ and one point from every odd cactus of $A''$.

Now, by Proposition~\ref{finiterealizable}, there are a
homeomorphism $f:\EE\to \EE$, a simple  shrub $A^*$, and an
analytic map $F:\EE\setminus T^*\to \R$ (here $T^*=f^{-1}(T'')$),
such that $f(A^*)=A''$, $F(u)\neq 0$ for any $u\in \EE\setminus
A^*$, and $F(u)=0$ for any $u\in (\Bd A^*)\setminus T^*$. Let
$L_i^*=f^{-1}(L_i')$ and $J_{i,m}^*=f^{-1}(J_{i,m}')$ for any $i$
and $m$. Write $M^*=\bigcup_i (L_i^*\cup \bigcup_m J_{i,m}^*)$ as
the countable union of some pairwise disjoint dendrites
$\{C_j^*\}_j$ (with their diameters tending to zero in the
infinite case). Then $f^*=g\circ f$ is constant on each dendrite
$C_j^*$, and if we distinguish a point $r_j^*\in C_j^*$ for each
$j$, then we get that $f^*$ maps bijectively $(\EE\setminus
M^*)\cup \{r_j^*\}_j$ onto $\EE$. See Figure~\ref{figura4}.

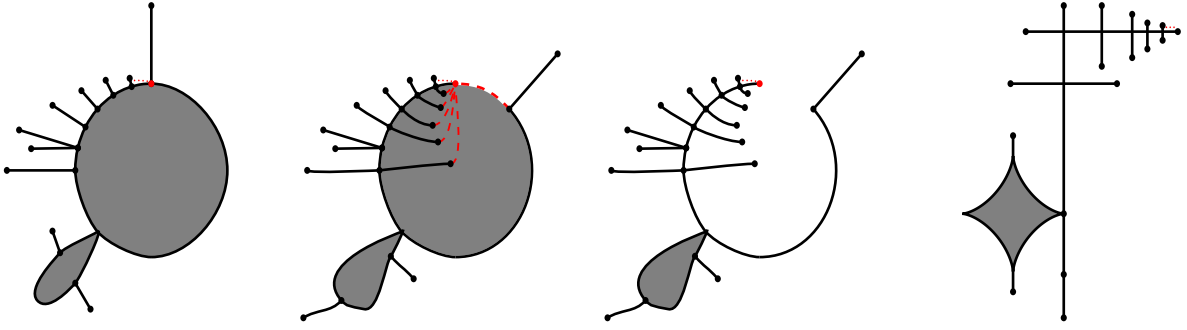
\begin{figure}
    \centering
\begin{tikzpicture}[scale=1.0, yscale=1.15]

%FIGURA 1

 \filldraw[line width=1, fill=gray]
     (2,-1) .. controls +(0:0.5) and +(270:0.5) ..  (3,0)
                .. controls +(90:0.5) and +(0:0.5) .. (2,1)
                        .. controls +(180:0.1) and +(15:0.1) .. ({2+cos(90 + 15)},{sin(90 + 15)})
                        .. controls +(195:0.1) and +(30:0.1) ..  ({2+cos(90 + 30)},{sin(90 + 30)})
                        .. controls +(210:0.1) and +(45:0.1) ..  ({2+cos(90 + 45)},{sin(90 + 45)})
                        .. controls +(225:0.1) and +(60:0.1) ..  ({2+cos(90 + 60)},{sin(90 + 60)})
                        .. controls +(240:0.1) and +(75:0.1) ..  ({2+cos(90 + 75)},{sin(90 + 75)})
                        .. controls +(255:0.1) and +(90:0.1) ..  (1,0)
                        .. controls +(270:0.2) and +(135:0.2) ..  (1.293,-0.707)
                        .. controls +(315:0.2) and +(180:0.2) ..  (2,-1);

 \filldraw[line width=1, fill=gray]
     (1.293,-0.707) .. controls +(210:0.1) and +(45:0.1) ..  (0.8,-0.95)
                        .. controls +(225:0.1) and +(135:0.2) ..  (0.5,-1.5)
                        .. controls +(315:0.2) and +(225:0.1) ..  (1.0,-1.3)
                .. controls +(45:0.1) and +(30:0.1) .. (1.293,-0.707);

\draw[line width=1] (0.8,-0.95) -- (0.7,-0.7); \fill (0.8,-0.95)
circle (0.04); \fill (0.7,-0.7) circle (0.04); \draw[line width=1]
(1.0,-1.3) -- (1.2,-1.6); \fill (1.0,-1.3) circle (0.04); \fill
(1.2,-1.6) circle (0.04);

\draw[line width=1] ({2+cos(90 + 15)},{sin(90 + 15)}) -- ({2+1.1*cos(90 + 15)},{1.1*sin(90 + 15)}); %pi/2 + pi/6
\fill ({2+cos(90 + 15)},{sin(90 + 15)}) circle (0.04); \fill
({2+1.1*cos(90 + 15)},{1.1*sin(90 + 15)}) circle (0.04);

\draw[line width=1] ({2+cos(90 + 30)},{sin(90 + 30)}) -- ({2+1.2*cos(90 + 30)},{1.2*sin(90 + 30)}); %pi/2 + pi/6
\fill ({2+cos(90 + 30)},{sin(90 + 30)}) circle (0.04); \fill
({2+1.2*cos(90 + 30)},{1.2*sin(90 + 30)}) circle (0.04);

\draw[line width=1] ({2+cos(90 + 45)},{sin(90 + 45)}) -- ({2+1.3*cos(90 + 45)},{1.3*sin(90 + 45)}); %pi/2 + pi/6
\fill ({2+cos(90 + 45)},{sin(90 + 45)}) circle (0.04); \fill
({2+1.3*cos(90 + 45)},{1.3*sin(90 + 45)}) circle (0.04);

\draw[line width=1] ({2+cos(90 + 60)},{sin(90 + 60)}) -- ({2+1.5*cos(90 + 60)},{1.5*sin(90 + 60)}); %pi/2 + pi/6
\fill ({2+cos(90 + 60)},{sin(90 + 60)}) circle (0.04); \fill
({2+1.5*cos(90 + 60)},{1.5*sin(90 + 60)}) circle (0.04);

\draw[line width=1] ({2+cos(90 + 75)},{sin(90 + 75)}) -- ({2+1.8*cos(90 + 75)},{1.8*sin(90 + 75)}); %pi/2 + pi/6
\fill ({2+cos(90 + 75)},{sin(90 + 75)}) circle (0.04); \fill
({2+1.8*cos(90 + 75)},{1.8*sin(90 + 75)}) circle (0.04);
\draw[line width=1] ({2+cos(90 + 75)},{sin(90 + 75)}) -- ({2+1.6*cos(90 + 81)},{1.6*sin(90 + 81)}); %pi/2 + pi/6
\fill ({2+cos(90 + 75)},{sin(90 + 75)}) circle (0.04); \fill
({2+1.6*cos(90 + 81)},{1.6*sin(90 + 81)}) circle (0.04);

\draw[line width=1] ({2+cos(180)},{sin(180)}) -- ({2+1.9*cos(180)},{1.9*sin(180)}); %pi/2 + pi/6
\fill ({2+cos(180)},{sin(180)}) circle (0.04); \fill
({2+1.9*cos(180)},{1.9*sin(180)}) circle (0.04);

\draw[line width=1] ({2+cos(90)},{sin(90)}) -- ({2+1.9*cos(90)},{1.9*sin(90)}); %pi/2 + pi/6
\fill ({2+cos(90)},{sin(90)}) circle (0.04); \fill
({2+1.9*cos(90)},{1.9*sin(90)}) circle (0.04);

\fill[color=red] ({2+1.03*cos(93)},{1.03*sin(93)}) circle (0.01);
\fill[color=red] ({2+1.04*cos(96)},{1.04*sin(96)}) circle (0.01);
\fill[color=red] ({2+1.05*cos(99)},{1.05*sin(99)}) circle (0.01);
\fill[color=red] ({2+1.06*cos(102)},{1.06*sin(102)}) circle
(0.01);

\fill[color=red] (2.0,1.0) circle (0.04);

%FIGURA 2

\begin{scope}[xshift=4cm]

 \filldraw[line width=1, color=white, fill=gray]
     (2,-1) .. controls +(0:0.9) and +(315:0.9) ..  ({2+cos(45)},{sin(45)})
                .. controls +(135:0.3) and +(0:0.3) .. (2,1)
                        .. controls +(180:0.1) and +(15:0.1) .. ({2+cos(90 + 15)},{sin(90 + 15)})
                        .. controls +(195:0.1) and +(30:0.1) ..  ({2+cos(90 + 30)},{sin(90 + 30)})
                        .. controls +(210:0.1) and +(45:0.1) ..  ({2+cos(90 + 45)},{sin(90 + 45)})
                        .. controls +(225:0.1) and +(60:0.1) ..  ({2+cos(90 + 60)},{sin(90 + 60)})
                        .. controls +(240:0.1) and +(75:0.1) ..  ({2+cos(90 + 75)},{sin(90 + 75)})
                        .. controls +(255:0.1) and +(90:0.1) ..  (1,0)
                        .. controls +(270:0.2) and +(135:0.2) ..  (1.293,-0.707)
                        .. controls +(315:0.2) and +(180:0.2) ..  (2,-1);

 \draw[line width=1]
     (2,-1) .. controls +(0:0.9) and +(315:0.9) ..  ({2+cos(45)},{sin(45)});

\draw[line width=1, color=red, dashed]
     ({2+cos(45)},{sin(45)})  .. controls +(135:0.3) and +(0:0.3) .. (2,1);

\draw[line width=1]
     (2,1)  .. controls +(180:0.1) and +(15:0.1) .. ({2+cos(90 + 15)},{sin(90 + 15)})
                        .. controls +(195:0.1) and +(30:0.1) ..  ({2+cos(90 + 30)},{sin(90 + 30)})
                        .. controls +(210:0.1) and +(45:0.1) ..  ({2+cos(90 + 45)},{sin(90 + 45)})
                        .. controls +(225:0.1) and +(60:0.1) ..  ({2+cos(90 + 60)},{sin(90 + 60)})
                        .. controls +(240:0.1) and +(75:0.1) ..  ({2+cos(90 + 75)},{sin(90 + 75)})
                        .. controls +(255:0.1) and +(90:0.1) ..  (1,0)
                        .. controls +(270:0.2) and +(135:0.2) ..  (1.293,-0.707)
                        .. controls +(315:0.2) and +(180:0.2) ..  (2,-1);

 \filldraw[line width=1, fill=gray]
     (1.293,-0.707) .. controls +(200:0.1) and +(135:0.6) ..  (0.5,-1.5)
                        .. controls +(315:0.1) and +(170:0.2) ..  (0.8,-1.6)
                        .. controls +(350:0.2) and +(225:0.1) ..  (1.15,-0.99)
                .. controls +(55:0.1) and +(50:0.1) .. (1.293,-0.707);

%\fill (0.5,-1.5) circle (0.04);
%\fill (0.8,-1.6) circle (0.04);
%\fill (1.15,-0.99) circle (0.04);

 \draw[line width=1]
     (0.5,-1.5) .. controls +(225:0.2) and +(30:0.2) ..  (0.0,-1.7);
\fill (0.5,-1.5) circle (0.04); \fill  (0.0,-1.7) circle (0.04);

 \draw[line width=1]
     (1.15,-0.99) .. controls +(315:0.2) and +(135:0.2) ..  (1.45,-1.25);

\fill (1.15,-0.99) circle (0.04); \fill (1.45,-1.25) circle
(0.04);

\draw[line width=1]
     ({2+1.1*cos(90 + 15)},{1.1*sin(90 + 15)}) .. controls +(285:0.05) and +(105:0.05) .. ({2+cos(90 + 15)},{sin(90 + 15)})
                                                   .. controls +(285:0.05) and +(180:0.05) .. ({2+0.9*cos(90 + 10)},{0.9*sin(90 + 10)});
\draw[line width=0.7, color=red, dashed]
     ({2+0.9*cos(90 + 10)},{0.9*sin(90 + 10)})  .. controls +(0:0.05) and +(230:0.05) .. (2.0,1.0);

\fill ({2+0.9*cos(90 + 10)},{0.9*sin(90 + 10)}) circle (0.04);
\fill ({2+1.1*cos(90 + 15)},{1.1*sin(90 + 15)}) circle (0.04);
\fill ({2+cos(90 + 15)},{sin(90 + 15)}) circle (0.04);

\draw[line width=1]
     ({2+1.2*cos(90 + 30)},{1.2*sin(90 + 30)}) .. controls +(285:0.05) and +(120:0.05) .. ({2+cos(90 + 30)},{sin(90 + 30)})
                                                   .. controls +(300:0.05) and +(180:0.05) .. ({2+0.75*cos(90 + 15)},{0.75*sin(90 + 15)});
\draw[line width=0.7, color=red, dashed]
     ({2+0.75*cos(90 + 15)},{0.75*sin(90 + 15)})  .. controls +(0:0.05) and +(240:0.05) .. (2.0,1.0);

\fill ({2+0.75*cos(90 + 15)},{0.75*sin(90 + 15)}) circle (0.04);
\fill ({2+cos(90 + 30)},{sin(90 + 30)}) circle (0.04); \fill
({2+1.2*cos(90 + 30)},{1.2*sin(90 + 30)}) circle (0.04);

\draw[line width=1]
     ({2+1.3*cos(90 + 45)},{1.3*sin(90 + 45)}) .. controls +(290:0.05) and +(140:0.05) .. ({2+cos(90 + 45)},{sin(90 + 45)})
                                                   .. controls +(320:0.05) and +(180:0.2) .. ({2+0.6*cos(90 + 30)},{0.6*sin(90 + 30)});
\draw[line width=0.7, color=red, dashed]
     ({2+0.6*cos(90 + 30)},{0.6*sin(90 + 30)})  .. controls +(0:0.2) and +(250:0.05) .. (2.0,1.0);

\fill ({2+0.6*cos(90 + 30)},{0.6*sin(90 + 30)}) circle (0.04);
\fill ({2+cos(90 + 45)},{sin(90 + 45)}) circle (0.04); \fill
({2+1.3*cos(90 + 45)},{1.3*sin(90 + 45)}) circle (0.04);

\draw[line width=1]
     ({2+1.5*cos(90 + 60)},{1.5*sin(90 + 60)}) .. controls +(290:0.05) and +(140:0.05) .. ({2+cos(90 + 60)},{sin(90 + 60)})
                                                   .. controls +(320:0.05) and +(180:0.2) .. ({2+0.4*cos(90 + 35)},{0.4*sin(90 + 35)});
\draw[line width=0.7, color=red, dashed]
     ({2+0.4*cos(90 + 35)},{0.4*sin(90 + 35)})  .. controls +(0:0.2) and +(250:0.05) .. (2.0,1.0);

\fill ({2+0.4*cos(90 + 35)},{0.4*sin(90 + 35)}) circle (0.04);
\fill ({2+cos(90 + 60)},{sin(90 + 60)}) circle (0.04); \fill
({2+1.5*cos(90 + 60)},{1.5*sin(90 + 60)}) circle (0.04);

\draw[line width=1] ({2+cos(90 + 75)},{sin(90 + 75)}) -- ({2+1.8*cos(90 + 75)},{1.8*sin(90 + 75)}); %pi/2 + pi/6
\fill ({2+cos(90 + 75)},{sin(90 + 75)}) circle (0.04); \fill
({2+1.8*cos(90 + 75)},{1.8*sin(90 + 75)}) circle (0.04);
\draw[line width=1] ({2+cos(90 + 75)},{sin(90 + 75)}) -- ({2+1.6*cos(90 + 81)},{1.6*sin(90 + 81)}); %pi/2 + pi/6
\fill ({2+cos(90 + 75)},{sin(90 + 75)}) circle (0.04); \fill
({2+1.6*cos(90 + 81)},{1.6*sin(90 + 81)}) circle (0.04);

\draw[line width=1]
     ({2+1.95*cos(180)},{1.95*sin(180)}) .. controls +(310:0.05) and +(180:0.05) .. ({2+cos(180)},{sin(180)})
                                                   .. controls +(5:0.05) and +(180:0.2) .. ({2+0.1*cos(90 + 40)},{0.1*sin(90 + 40)});
\draw[line width=0.7, color=red, dashed]
     ({2+0.1*cos(90 + 40)},{0.1*sin(90 + 40)})  .. controls +(0:0.2) and +(250:0.05) .. (2.0,1.0);

\fill ({2+0.1*cos(90 + 40)},{0.1*sin(90 + 40)}) circle (0.04);
\fill ({2+cos(180)},{sin(180)}) circle (0.04); \fill
({2+1.95*cos(180)},{1.95*sin(180)}) circle (0.04);

%
%\draw[line width=1] ({2+cos(180)},{sin(180)}) -- ({2+1.9*cos(180)},{1.9*sin(180)}); %pi/2 + pi/6
%\fill ({2+cos(180)},{sin(180)}) circle (0.04);
%\fill ({2+1.9*cos(180)},{1.9*sin(180)}) circle (0.04);

\draw[line width=1] ({2+cos(45)},{sin(45)}) -- ({2+1.9*cos(45)},{1.9*sin(45)}); %pi/2 + pi/6
\fill ({2+cos(45)},{sin(45)}) circle (0.04); \fill
({2+1.9*cos(45)},{1.9*sin(45)}) circle (0.04);

\fill[color=red] (2.0,1.0) circle (0.04);

\fill[color=red] ({2+1.03*cos(93)},{1.03*sin(93)}) circle (0.01);
\fill[color=red] ({2+1.04*cos(96)},{1.04*sin(96)}) circle (0.01);
\fill[color=red] ({2+1.05*cos(99)},{1.05*sin(99)}) circle (0.01);
\fill[color=red] ({2+1.06*cos(102)},{1.06*sin(102)}) circle
(0.01);
\end{scope}

%FIGURA 3
\begin{scope}[xshift=8cm]

 %\filldraw[line width=1, color=white, fill=gray]
     %(2,-1) .. controls +(0:0.9) and +(315:0.9) ..  ({2+cos(45)},{sin(45)})
                %.. controls +(135:0.3) and +(0:0.3) .. (2,1)
                        %.. controls +(180:0.1) and +(15:0.1) .. ({2+cos(90 + 15)},{sin(90 + 15)})
                        %.. controls +(195:0.1) and +(30:0.1) ..  ({2+cos(90 + 30)},{sin(90 + 30)})
                        %.. controls +(210:0.1) and +(45:0.1) ..  ({2+cos(90 + 45)},{sin(90 + 45)})
                        %.. controls +(225:0.1) and +(60:0.1) ..  ({2+cos(90 + 60)},{sin(90 + 60)})
                        %.. controls +(240:0.1) and +(75:0.1) ..  ({2+cos(90 + 75)},{sin(90 + 75)})
                        %.. controls +(255:0.1) and +(90:0.1) ..  (1,0)
                        %.. controls +(270:0.2) and +(135:0.2) ..  (1.293,-0.707)
                        %.. controls +(315:0.2) and +(180:0.2) ..  (2,-1);

 \draw[line width=1]
     (2,-1) .. controls +(0:0.9) and +(315:0.9) ..  ({2+cos(45)},{sin(45)});

%\draw[line width=1, color=red, dashed]
     %({2+cos(45)},{sin(45)})  .. controls +(135:0.3) and +(0:0.3) .. (2,1);

\draw[line width=1]
     (2,1)  .. controls +(180:0.1) and +(15:0.1) .. ({2+cos(90 + 15)},{sin(90 + 15)})
                        .. controls +(195:0.1) and +(30:0.1) ..  ({2+cos(90 + 30)},{sin(90 + 30)})
                        .. controls +(210:0.1) and +(45:0.1) ..  ({2+cos(90 + 45)},{sin(90 + 45)})
                        .. controls +(225:0.1) and +(60:0.1) ..  ({2+cos(90 + 60)},{sin(90 + 60)})
                        .. controls +(240:0.1) and +(75:0.1) ..  ({2+cos(90 + 75)},{sin(90 + 75)})
                        .. controls +(255:0.1) and +(90:0.1) ..  (1,0)
                        .. controls +(270:0.2) and +(135:0.2) ..  (1.293,-0.707)
                        .. controls +(315:0.2) and +(180:0.2) ..  (2,-1);

 \filldraw[line width=1, fill=gray]
     (1.293,-0.707) .. controls +(200:0.1) and +(135:0.6) ..  (0.5,-1.5)
                        .. controls +(315:0.1) and +(170:0.2) ..  (0.8,-1.6)
                        .. controls +(350:0.2) and +(225:0.1) ..  (1.15,-0.99)
                .. controls +(55:0.1) and +(50:0.1) .. (1.293,-0.707);

%\fill (0.5,-1.5) circle (0.04);
%\fill (0.8,-1.6) circle (0.04);
%\fill (1.15,-0.99) circle (0.04);

 \draw[line width=1]
     (0.5,-1.5) .. controls +(225:0.2) and +(30:0.2) ..  (0.0,-1.7);
\fill (0.5,-1.5) circle (0.04); \fill  (0.0,-1.7) circle (0.04);

 \draw[line width=1]
     (1.15,-0.99) .. controls +(315:0.2) and +(135:0.2) ..  (1.45,-1.25);

\fill (1.15,-0.99) circle (0.04); \fill (1.45,-1.25) circle
(0.04);

\draw[line width=1]
     ({2+1.1*cos(90 + 15)},{1.1*sin(90 + 15)}) .. controls +(285:0.05) and +(105:0.05) .. ({2+cos(90 + 15)},{sin(90 + 15)})
                                                   .. controls +(285:0.05) and +(180:0.05) .. ({2+0.9*cos(90 + 10)},{0.9*sin(90 + 10)});
%\draw[line width=0.7, color=red, dashed]
     %({2+0.9*cos(90 + 10)},{0.9*sin(90 + 10)})  .. controls +(0:0.05) and +(230:0.05) .. (2.0,1.0);

\fill ({2+0.9*cos(90 + 10)},{0.9*sin(90 + 10)}) circle (0.04);
\fill ({2+1.1*cos(90 + 15)},{1.1*sin(90 + 15)}) circle (0.04);
\fill ({2+cos(90 + 15)},{sin(90 + 15)}) circle (0.04);

\draw[line width=1]
     ({2+1.2*cos(90 + 30)},{1.2*sin(90 + 30)}) .. controls +(285:0.05) and +(120:0.05) .. ({2+cos(90 + 30)},{sin(90 + 30)})
                                                   .. controls +(300:0.05) and +(180:0.05) .. ({2+0.75*cos(90 + 15)},{0.75*sin(90 + 15)});
%\draw[line width=0.7, color=red, dashed]
     %({2+0.75*cos(90 + 15)},{0.75*sin(90 + 15)})  .. controls +(0:0.05) and +(240:0.05) .. (2.0,1.0);

\fill ({2+0.75*cos(90 + 15)},{0.75*sin(90 + 15)}) circle (0.04);
\fill ({2+cos(90 + 30)},{sin(90 + 30)}) circle (0.04); \fill
({2+1.2*cos(90 + 30)},{1.2*sin(90 + 30)}) circle (0.04);

\draw[line width=1]
     ({2+1.3*cos(90 + 45)},{1.3*sin(90 + 45)}) .. controls +(290:0.05) and +(140:0.05) .. ({2+cos(90 + 45)},{sin(90 + 45)})
                                                   .. controls +(320:0.05) and +(180:0.2) .. ({2+0.6*cos(90 + 30)},{0.6*sin(90 + 30)});
%\draw[line width=0.7, color=red, dashed]
     %({2+0.6*cos(90 + 30)},{0.6*sin(90 + 30)})  .. controls +(0:0.2) and +(250:0.05) .. (2.0,1.0);

\fill ({2+0.6*cos(90 + 30)},{0.6*sin(90 + 30)}) circle (0.04);
\fill ({2+cos(90 + 45)},{sin(90 + 45)}) circle (0.04); \fill
({2+1.3*cos(90 + 45)},{1.3*sin(90 + 45)}) circle (0.04);

\draw[line width=1]
     ({2+1.5*cos(90 + 60)},{1.5*sin(90 + 60)}) .. controls +(290:0.05) and +(140:0.05) .. ({2+cos(90 + 60)},{sin(90 + 60)})
                                                   .. controls +(320:0.05) and +(180:0.2) .. ({2+0.4*cos(90 + 35)},{0.4*sin(90 + 35)});
%\draw[line width=0.7, color=red, dashed]
     %({2+0.4*cos(90 + 35)},{0.4*sin(90 + 35)})  .. controls +(0:0.2) and +(250:0.05) .. (2.0,1.0);

\fill ({2+0.4*cos(90 + 35)},{0.4*sin(90 + 35)}) circle (0.04);
\fill ({2+cos(90 + 60)},{sin(90 + 60)}) circle (0.04); \fill
({2+1.5*cos(90 + 60)},{1.5*sin(90 + 60)}) circle (0.04);

\draw[line width=1] ({2+cos(90 + 75)},{sin(90 + 75)}) -- ({2+1.8*cos(90 + 75)},{1.8*sin(90 + 75)}); %pi/2 + pi/6
\fill ({2+cos(90 + 75)},{sin(90 + 75)}) circle (0.04); \fill
({2+1.8*cos(90 + 75)},{1.8*sin(90 + 75)}) circle (0.04);
\draw[line width=1] ({2+cos(90 + 75)},{sin(90 + 75)}) -- ({2+1.6*cos(90 + 81)},{1.6*sin(90 + 81)}); %pi/2 + pi/6
\fill ({2+cos(90 + 75)},{sin(90 + 75)}) circle (0.04); \fill
({2+1.6*cos(90 + 81)},{1.6*sin(90 + 81)}) circle (0.04);

\draw[line width=1]
     ({2+1.95*cos(180)},{1.95*sin(180)}) .. controls +(310:0.05) and +(180:0.05) .. ({2+cos(180)},{sin(180)})
                                                   .. controls +(5:0.05) and +(180:0.2) .. ({2+0.1*cos(90 + 40)},{0.1*sin(90 + 40)});
%\draw[line width=0.7, color=red, dashed]
     %({2+0.1*cos(90 + 40)},{0.1*sin(90 + 40)})  .. controls +(0:0.2) and +(250:0.05) .. (2.0,1.0);

\fill ({2+0.1*cos(90 + 40)},{0.1*sin(90 + 40)}) circle (0.04);
\fill ({2+cos(180)},{sin(180)}) circle (0.04); \fill
({2+1.95*cos(180)},{1.95*sin(180)}) circle (0.04);

%
%\draw[line width=1] ({2+cos(180)},{sin(180)}) -- ({2+1.9*cos(180)},{1.9*sin(180)}); %pi/2 + pi/6
%\fill ({2+cos(180)},{sin(180)}) circle (0.04);
%\fill ({2+1.9*cos(180)},{1.9*sin(180)}) circle (0.04);

\draw[line width=1] ({2+cos(45)},{sin(45)}) -- ({2+1.9*cos(45)},{1.9*sin(45)}); %pi/2 + pi/6
\fill ({2+cos(45)},{sin(45)}) circle (0.04); \fill
({2+1.9*cos(45)},{1.9*sin(45)}) circle (0.04);

\fill[color=red] (2.0,1.0) circle (0.04);

\fill[color=red] ({2+1.03*cos(93)},{1.03*sin(93)}) circle (0.01);
\fill[color=red] ({2+1.04*cos(96)},{1.04*sin(96)}) circle (0.01);
\fill[color=red] ({2+1.05*cos(99)},{1.05*sin(99)}) circle (0.01);
\fill[color=red] ({2+1.06*cos(102)},{1.06*sin(102)}) circle
(0.01);
\end{scope}

%FIGURA 4
 \draw[line width=1] (14,1.9) -- (14,-1.7);
\fill (14,1.9) circle (0.04); \fill (14,-1.7) circle (0.04);

\fill (14,-1.2) circle (0.04);

\fill (14,-0.5) circle (0.04);

\draw[line width=1] (13.3333, 0.4) -- (13.3333, -1.4); \fill
(13.3333, 0.4) circle (0.04); \fill (13.3333, -1.4) circle (0.04);

\filldraw[line width=1, fill=gray,
 smooth,samples=100,domain=0:360]
 plot(
 {13 + 1/3 + 1/6*(3*cos(\x)+cos(3*\x))},
 {-1/2+1/6*(3*sin(\x)-sin(3*\x))}
 );

%\filldraw[line width=1, fill=gray,
%smooth,samples=100,domain=0.0:6.28319] plot[parametric]
%function{13 + (1.0/3.0) + (1.0/6.0)*(3*cos((t))+cos((3.0*t))),-0.5
%+ (1.0/6.0)*(3*sin((t)) -sin((3.0*t)))};

\draw[line width=1] (13.3, 1.0) -- (14.7, 1.0); \fill (13.3, 1.0)
circle (0.04); \fill (14.7, 1.0) circle (0.04);

\draw[line width=1] (13.5, 1.6) -- (15.5, 1.6); \fill (13.5, 1.6)
circle (0.04); \fill (15.5, 1.6) circle (0.04);

\draw[line width=1] (14.5, 1.9) -- (14.5, 1.2); \fill (14.5, 1.9)
circle (0.04); \fill (14.5, 1.2) circle (0.04);

\draw[line width=1] (14.9, 1.8) -- (14.9, 1.3); \fill (14.9, 1.8)
circle (0.04); \fill (14.9, 1.3) circle (0.04);

\draw[line width=1] (15.1, 1.7) -- (15.1, 1.4); \fill (15.1, 1.7)
circle (0.04); \fill (15.1, 1.4) circle (0.04);

\draw[line width=1] (15.3, 1.67) -- (15.3, 1.5); \fill (15.3,
1.67) circle (0.04); \fill (15.3, 1.5) circle (0.04);

\fill[color=red] (15.35, 1.65) circle (0.01); \fill[color=red]
(15.40, 1.65) circle (0.01); \fill[color=red] (15.45, 1.65) circle
(0.01);

\end{tikzpicture}\caption{\label{figura4} From left to right, the
shrubs $A$, $A'$, $A''$ and $A^*$.}
\end{figure}

By Proposition~\ref{equiv}(ii), if we define the equivalence
relation $\sim$ in $\EE$ by $u\sim v$ if either $u=v$ or there is
$j$ such that both $u$ and $v$ belong to $C_j^*$, then the
quotient space $\Sigma=\EE/\sim$ is homeomorphic to $\EE$. Endow
$\Sigma$ with an analytic differential structure and let
$\Pi:\EE\to\Sigma$ be the projection map. Let $K^*=M^*\cup T^*$,
$U^*=\EE\setminus K^*$, and $\mathcal{V}=\Pi(U^*)$, and  use
Theorem~\ref{analytic-0} to find a continuous onto map
$\tilde{\Pi}:\EE\to \Sigma$ satisfying $\tilde{\Pi}(u)=\Pi(u)$ for
any $u\in K^*$ and mapping, analytically and diffeomorphically,
$U^*$ onto $\mathcal{V}$. Observe that the map $\Gamma:\Sigma\to
\EE$ mapping each $C_j^*$ (when seen as a point from $\Sigma$) to
$f^*(r_k)$, and each $u\in \EE\setminus M^*$ (again, when seen as
a point from $\Sigma$) to $\tilde{\Pi}^{-1}(f^*(u))$ is a
bijection satisfying $\Gamma\circ \tilde{\Pi}=f^*$. In fact,
$\Gamma$ is continuous (hence a homeomorphism), because if $U$ is
open in $\EE$, then
$\tilde{\Pi}^{-1}(\Gamma^{-1}(U))=(f^*)^{-1}(U)$ is open, hence
$\Gamma^{-1}(U)=\tilde{\Pi}((f^*)^{-1}(U))=\Sigma\setminus
\tilde{\Pi}(\EE\setminus(f^*)^{-1}(U))$ is open as well.

Use again Theorem~\ref{analytic-0} to find an analytic
diffeomorphism $\Phi:\Sigma\to \EE$ and consider the homeomorphism
$h=\Phi\circ \Gamma^{-1}$, the shrub $B=h(A)$ and the set
$P=h(T)$. Then $\Phi \circ \tilde{\Pi}$ maps, analytically and
diffeomorphically, $U^*$ onto $W=\EE\setminus P$. Moreover, $H=
F|_{U^*}\circ (\Phi\circ \tilde{\Pi})|_W^{-1}$ is an analytic map
on $W$ satisfying $H(v)\neq 0$ for any $v\in \EE\setminus B$ and
$H(v)=0$ for any $v\in (\Bd B)\setminus P$. After applying
Theorem~\ref{analytic-1} and Proposition~\ref{realizing},
Theorem~\ref{teoB} follows.

\section*{Acknowledgements}

This work has been partially supported by Ministerio de
Econom\'{\i}a y Competitividad, Spain, grant MTM2014-52920-P. The
first author is also supported by Fundaci\'on S\'eneca by means of
the program ``Contratos Predoctorales de Formaci\'on del Personal
Investigador'', grant 18910/FPI/13.

\bigskip
\noindent \emph{J. G. Esp\'{\i}n Buend\'{\i}a and V. Jim\'enez
L\'opez's address:} {\sc Departamento de Matem\'a\-ticas,
Universidad de Murcia, Campus de Espinardo, 30100 Murcia, Spain.}

e-mails: {\tt josegines.espin@um.es, vjimenez@um.es}

\end{document}